\documentclass[reqno,a4paper]{article}

\usepackage{amssymb,amsmath,epsfig,graphics,graphicx,color}

\definecolor{lightblue}{rgb}{0.22,0.45,0.70}
\usepackage[colorlinks=true,breaklinks=true,linkcolor=lightblue,citecolor=lightblue]{hyperref}
\hypersetup{hypertexnames=false}
\usepackage{subfig}
\usepackage{float}
\usepackage{mathrsfs}
\usepackage{comment}
\usepackage{appendix}
\usepackage{tikz}
\usetikzlibrary{arrows.meta,calc}
\tikzset{
	vertex/.style={circle, fill=black, inner sep=1.5pt},
	edgedof/.style={-Stealth, thick},
	edgedofsmall/.style={-Stealth, thin},
	mid/.style={rectangle, fill=black, inner sep=2pt}
}

\usetikzlibrary{arrows,positioning,shapes,fit,calc}
\usepackage{cancel}

\newtheorem{remark}{Remark}[section]
\newtheorem{lemma}{Lemma}[section]
\newtheorem{theorem}{Theorem}[section]
\newtheorem{proposition}{Proposition}[section]
\newtheorem{assumption}{Assumption}[section]

\newtheorem{definition}{Definition}[section]
\newtheorem{example}{Example}[section]

\usepackage{soul}

\renewcommand{\O}{\Omega}

\def\Jh{\mathcal{J}_h}

\def\bx{\boldsymbol{x}}

\newcommand\bv{\boldsymbol{v}}
\newcommand\bn{\boldsymbol{n}}

\newcommand\bt{\boldsymbol{t}}

\def\HdoK{{H^{2}(\E)}}

\def\EE{\mathcal{E}}
\def\EEh{\EE_h}

\def\PiK{\Pi_{\E}^{\bD}}

\def\HdoK{{H^{2}(\E)}}

\def\inte{{\rm int}}
\def\bdry{{\rm bdry}}
\def\EE{\mathcal{E}}
\def\EEh{\EE_h}
\def\EEi{\EE_h^{\inte}}
\def\EEbdry{\EE_h^{\bdry}}
\def\VV{\mathcal{V}}
\def\VVi{\VV_h^{\inte}}
\def\VVbdry{\VV_h^{\bdry}}
\def\HdoK{{H^{2}(\E)}}

\newcommand\Vh{{\mathcal V}_h}

\def\D{\mathcal D}

\def\vb{{\bf v}}
\def\PiK{\Pi_{\E}^{\D}}
\def\qon{{\quad\hbox{on}\quad}}

\def\CH{\mathcal{H}}

\def\CP{\mathcal{P}}

\def\dim{\mathop{\mathrm{\,dim}}\nolimits}

\def\div{\mathop{\mathrm{div}}\nolimits}

\def\dn{\partial_{\bn}} 
\def\E{\mathrm{K}}

\def\ts{\tilde{s}}

\def\HdoO{{H_0^2(\O)}}

\def\HuoO{{H_0^1(\O)}}

\def\M{{\mathbb M}}

\def\O{\Omega}
\def\PiK{\Pi_k^{\E}}

\def\P{{\mathbb P}}
\newcommand{\PP}{ \mathbf{P} }  
\newcommand{\LL}{ \boldsymbol{L} }  

\def\Q{{\mathcal Q}}
\def\R{{\mathbb{R}}}
\def\rot{\mathop{\mathrm{rot}}\nolimits}

\def\VK{V_k^h(\E)}
\def\Vh{V_k^h}

\def\Q{{\mathcal Q}}
\def\nc{{\rm nc}}
\newcommand{\vertiii}[1]{{\left\vert\kern-0.25ex\left\vert\kern-0.25ex\left\vert #1 
		\right\vert\kern-0.25ex\right\vert\kern-0.25ex\right\vert}}

\newcommand\0{\boldsymbol{0}}

\newcommand*{\jump}[1]{\lbrack\hspace{-1.5pt}\lbrack #1\rbrack\hspace{-1.5pt}\rbrack}
\newcommand*{\Jump}[1]{\Big\lbrack\hspace{-3pt}\Big\lbrack #1\Big\rbrack\hspace{-3pt}\Big\rbrack}
\def\D{{\nabla}}

\def\PiK{\Pi_{\E}^{k,\D^2}}
\def\PinablaK{\Pi_{\E}^{k,\nabla}}

\def\PimunoK{\boldsymbol{\Pi}_{k-1}^{K}}

\def\PioKk{\Pi^{k}_{\E}}

\def\PimunoK{\boldsymbol{\Pi}_{\E}^{k-1}}

\def\dof{{\rm dof}}

\def\CP{{\bf CP}}
\def\SSP{{\bf NC}}
\def\CH{{\bf CH}}

\def\HncT{H_{k}^{2,\nc}(\O_h)}

\newcommand{\vertexValueDofs}{d_0^{\vb}}

\newcommand{\edgeValueDofs}{d_0^e}
\newcommand{\edgeNormalDofs}{d_1^e}
\newcommand{\innerDofs}{d_0^{\E}}

\newenvironment{proof}{\noindent{\it Proof.}}{\hfill$\square$}

\begin{document}

\title{Nonconforming virtual element methods for fourth-order nonlinear reaction-diffusion systems: a unified  framework and analysis}
\author{
D. Adak\thanks{{Department of Mathematics, Indian Institute of Technology-Kharagpur, WB, India, E-mail:
{\tt dibyenduadak25t@kgpian.iitkgp.ac.in}}},
 \quad D. Mora\thanks{GIMNAP, Departamento de Matem\'atica, Universidad
del B\'io-B\'io, Concepci\'on, Chile and
CI$^2$MA, Universidad de Concepci\'on, Concepci\'on, Chile.
E-mail: {\tt dmora@ubiobio.cl}}, \quad A. Silgado\thanks{GIMNAP, Departamento de Ciencias B\'asicas, Universidad
del B\'io-B\'io, Chill\'an, Chile. E-mail:
{\tt asilgado@ubiobio.cl}.} 
}  

\date{}
\maketitle

\begin{abstract}
We develop a unified framework for the design and analysis of high-order nonconforming
virtual element methods for nonlinear fourth-order reaction--diffusion problems
in two dimensions, with emphasis on \emph{clamped, Navier, and
Cahn--Hilliard-type} boundary conditions. Time discretization is performed using
the backward Euler scheme, while the spatial approximation relies on
nonconforming virtual element spaces of arbitrary order $k \ge 2$, encompassing
both $C^0$-nonconforming and Morley-type methods. A key contribution of this work is the development of a novel and rigorous unified error analysis for these numerical schemes, applicable to domains that are not necessarily convex, differing from the existing literature. By introducing a class of \emph{Companion operators}, we construct novel \emph{Ritz-type projections} and derive a new error equation that enables us to obtain optimal error estimates for the scheme under a \emph{minimal spatial regularity} assumption on the weak solution. Finally, we present numerical experiments on polygonal meshes as applications of the proposed framework, including the extended Fisher--Kolmogorov equation, and a fourth-order model with Cahn--Hilliard-type boundary conditions, which validate the theoretical results
and illustrate the performance of the method for the three classes of boundary
conditions.

\end{abstract}

\noindent
{\bf Key words}:  Nonconforming virtual elements,  fourth-order nonlinear PDEs, essential
and natural boundary conditions, Companion and Ritz operators, unified error analysis, minimal regularity.

\smallskip\noindent
{\bf Mathematics subject classifications (2000)}:  65N30, 65M15, 35J35

\maketitle

\section{Introduction}\label{SECTION:INTRO}
\paragraph{Scope and contributions.} 
The Virtual Element Method~\cite{Beirao2013_M3AS} (abbreviated as VEM) belongs to the class of polytopal
Galerkin schemes for the numerical approximation of Partial Differential
Equations (PDEs). These methods provide a flexible framework for discretizations on
general polytopal meshes, allowing the use of highly irregular polygonal and
polyhedral elements, which are not easily accommodated by classical finite
element methods. This flexibility allows for better adaptability to complex geometries, mesh adaptivity, and the efficient handling of hanging nodes, making them attractive for engineering and scientific applications. 

In particular, one of the key features of VEM is that it does not require explicit knowledge of basis functions inside the elements. Instead, it constructs approximation spaces that satisfy key mathematical properties, such as stability and consistency. Thus, this approach allows VEM to construct schemes of high polynomial degrees and work efficiently with elements of arbitrary shape (including non-convex polygons) without the need for complex numerical integration, which is a common challenge in standard FEMs on non-standard meshes. We refer to \cite{Beirao2013_M3AS}
for further details on these aspects.

The versatility of the VEM has been demonstrated through its application to a wide range of problems in recent years, in both conforming and nonconforming settings (see, for instance, \cite{Beirao2013_M3AS,Ahmad2013_CMA,ALKM2016,CMS2016} for some of the earliest works). For a current state of the art on VEM, we refer to~\cite{Book_VEM2022,Beirao2023_ActaNumer}. Furthermore, significant progress has been made in designing VEM schemes for solving fourth-order PDEs. Constructing $C^1$-conforming spaces remains a well-known challenge within the standard finite element framework, and higher-order nonconforming spaces for fourth-order problems are still relatively scarce, even on simplicial meshes. In contrast, the VEM enables the construction of both conforming and nonconforming schemes of arbitrary order through an appealing and computationally efficient approach. In particular, the authors in~\cite{Brezzi2013_CMAME,Chinosi2016_CMA} have developed families of conforming VE of high-order $ k \geq 2 $, offering greater flexibility compared to classical FEM approaches. One notable example of a $C^1$-conforming element is the application of the lowest-order VEM space to the Cahn--Hilliard equation, as studied in~\cite{Antonietti2016_SINUM}.

On the other hand, in~\cite{Zhao2016_M3AS}, a \( C^0 \)-nonconforming virtual element method (NCVEM) was developed without the need for degrees of freedom associated with the values of first-order derivatives at the vertices, contrasting with the conforming VE that maintains the same polynomial order \( k \). In~\cite{Zhao2018_JSC}, a Morley-type nonconforming virtual element was constructed, which imposed restrictions on the \( C^0 \)-nonconforming function space along each edge to eliminate the \( (k - 2) \)-order edge moments.  Independently, the authors in~\cite{Antonietti2018_M3AS} introduced a fully nonconforming virtual element that posses the same degrees of freedom as the Morley-type VE in \cite{Zhao2018_JSC}, but approached the problem differently by defining the local virtual space as the set of solutions to a biharmonic problem with specific boundary conditions. These VEMs, can be consider as the extension of the popular Morley finite element~\cite{Morley} to general polygonal meshes with high-order polynomial degrees. Other contributions on nonconforming VE schemes for fourth-order problems can be found, for instance, in \cite{CH2020,CKP-SINUM2023,Zhao2023_MathComp}.

Fourth-order models arising in a wide range of applications have been
extensively studied within both conforming and nonconforming VE frameworks.
In fluid mechanics, representative examples include the Brinkman and
Navier--Stokes equations in stream function formulation
\cite{Mora2021_IMA,Adak2024_CMAME,Mora2025_SISC}, while in solid mechanics
applications encompass Kirchhoff--Love and von Kárm\'an plate models
\cite{Mora2018_M2AN,Lovadina2021_M2AN,Shylaja2024_ACM}. Other classes of
problems, such as the reaction--diffusion Fisher--Kolmogorov model,
sub-diffusion equations, and nonlocal reaction systems, have also been
addressed within the VEM framework
\cite{Pei2023_CMA,Li2021_IMA,Adak2023_M3AS}. More recently, a general strategy
for constructing high-order NCVEMs for fourth-order problems
with variable coefficients has been proposed in~\cite{Dedner2022_IMA},
together with a high-order Morley-type VEM for the Cahn--Hilliard equation
\cite{Dedner2024_JSC}.

Beyond the virtual element setting, a broad class of numerical methods has been
developed for the approximation of biharmonic and more general fourth-order
problems. These include nonconforming and mixed formulations \cite{Ciarlet2002_FEM,Das2024_CMA}, discontinuous Galerkin methods, and $C^0$ interior penalty schemes \cite{Mozolevski2003_CMAM,Brenner2012_SINUM,Georgoulis2009_IMA,Dong2023_JSC,DE_IMA2024}. Polygonal discretization techniques, such as hybrid high-order and weak Galerkin methods, have also been investigated \cite{Georgoulis2009_IMA,DE2022}. Owing to their relevance in a wide range of applications, significant effort has been devoted to the numerical analysis of complex fourth-order models, including the extended Fisher--Kolmogorov equation, linear and nonlinear Cahn--Hilliard phase-field systems, micro-electromechanical devices, plate bending models, and fluid flow formulations \cite{Danumjaya-Pani_2006,Gudi-Gupta_camwa2013,Danumjaya2021_CMA,Elliott1989_SINUM,Kay2009_SINUM,GR}. These contributions confirm that the numerical analysis of fourth-order PDEs continues to be an active area of research.

Motivated by the above discussion and the sustained interest in numerical discretisations for fourth-order problems, in this work we focus on the development and analysis of NCVEMs for a class of nonlinear, time-dependent reaction--diffusion systems. The proposed framework is deliberately flexible and can be applied to different nonconforming virtual element spaces under various boundary conditions (BCs). In particular, the methodology encompasses several relevant models discussed above, namely, the extended Fisher--Kolmogorov equation, reaction--diffusion systems with Cahn--Hilliard-type BCs, and fluid flow problems in pure stream-function formulation.

Among these settings, a particularly challenging and relevant case arises when Cahn--Hilliard-type BCs are imposed. For the BCs described in~\eqref{Cahn-Hilliard:BCs}, a no-flux constraint is enforced on both $u$ and $\Delta u$ along the boundary of the domain. Such conditions play a fundamental role in the modelling of systems that preserve mass and energy (see, e.g., Remark~\ref{remark:boundary-apps}) and naturally appear in the analysis of Cahn--Hilliard-type problems; see, for instance,~\cite{Elliott1989_SINUM,Antonietti2016_SINUM,Feng2007_MathComp}. Despite their relevance, these BCs have received comparatively limited attention within the NCVEM framework. To the best of our knowledge, the recent contribution~\cite{Dedner2024_JSC} is among the few works addressing this case. This gap motivates the need for a unified variational and numerical framework capable of incorporating Cahn--Hilliard-type BCs alongside more classical configurations.

To this end, we present the continuous variational formulation under three distinct types of BCs: the less-studied Cahn--Hilliard case and, as by-products, the clamped and  Navier (also known as simply supported; see~\cite{Li2023_IMA}) cases. For each configuration, we establish an unified well-posedness result. Building upon the abstract framework introduced in~\cite{Dedner2022_IMA}, which relies on the notion of $\dof$-tuples to express NCVEMs in a unified manner, we further extend this approach to nonlinear, time-dependent regimes under different BCs. Moreover, we provide a unified well-posedness analysis for both the semi-discrete and fully discrete problems. In particular, existence and uniqueness of the fully-discrete scheme are obtained via a fixed-point argument under standard small time-step assumptions.

We would like to emphasize a challenging aspect that arises from the nonconformity of the discrete spaces and the approximation of the associated multilinear forms. These factors significantly increase the complexity of deriving rigorous error estimates for nonconforming Galerkin schemes. The challenge is further amplified within the VEM framework, where the trial and test functions are not explicitly known and suitable projection operators must be employed. Moreover, realistic scenarios involving non-convex domains, non-homogeneous boundary data, Navier or Cahn--Hilliard type often require high regularity of the continuous solution to guarantee optimal theoretical convergence rates—regularity that may not be satisfied in practice. In particular, we recall that, unlike the problem with homogeneous clamped BCs, the additional regularity index of the biharmonic problem can be less than $1/2$. In fact, under Cahn--Hilliard BCs, the regularity can be arbitrarily close to zero, even when the domain $\O$ is convex~\cite{Brenner2012_SINUM}. These situations further complicate the analysis but underscore the importance of developing a rigorous theoretical framework to address such challenges.

Early studies (see, e.g., \cite{Zhao2016_M3AS,Zhao2018_JSC,Antonietti2018_M3AS}) introduced NCVEMs for steady fourth-order problems in convex domains; however, the corresponding analyses relied on strong regularity assumptions on the exact solution (at least $H^4$-regularity), which restricted their practical applicability,  as discussed above. Similar limitations can be observed in the context of time-dependent problems; see, for instance, \cite{Li2021_IMA,Pei2023_CMA}. To relax these assumptions, several contributions have focused on the stationary setting, proposing enriching and companion operators that map nonconforming VE functions into suitable conforming spaces. This strategy has enabled sharper error analyses under weaker regularity requirements (cf.~\cite{Huang2021_JCAM,CKP-SINUM2023}). More recently, \cite{Khot2025_MathComp} introduced a new companion operator for general-degree Morley-type VE spaces, establishing enhanced orthogonality properties and best-approximation estimates, and successfully applying the approach to a steady Biot-plate model coupling fourth- and second-order equations.

Despite these approaches, linear and nonlinear time-dependent fourth-order problems within the NCVEM framework have received comparatively little attention. Motivated by this gap, we introduce a new class of Ritz-type operators for NCVEMs within a unified analytical framework, specifically designed for the error analysis of evolutionary biharmonic problems.

The construction of these operators follows a systematic three-step procedure. First, we design a suitable companion operator $\mathcal{J}_h$, endowed with orthogonality properties and best-approximation estimates, extending the companion mappings previously introduced in~\cite{Khot2025_MathComp}. Second, we introduce an auxiliary bilinear form $\widehat{A}(\cdot,\cdot)$ that consistently combines contributions at the $H^2$, $H^1$, and $L^2$ levels. Composing this form with the companion operator yields, for any $\varphi \in V$, a linear functional $\widehat{A}_{\varphi}(\cdot)$ (see Figure~\ref{fig:functional} for a schematic illustration). Finally, the Ritz operator is defined as the solution of an associated elliptic problem driven by $\widehat{A}(\cdot,\cdot)$ and the functional $\widehat{A}_{\varphi}(\cdot)$; see \eqref{Ritz:operator}.

The resulting Ritz-type projections are well defined under minimal regularity assumptions and exhibit optimal approximation properties. In particular, we establish optimal error estimates in the broken $H^2$ norm and in the broken $H^1$ seminorm. These estimates play a central role in the analysis of the nonlinear time-dependent problem and are key to proving optimal convergence rates in the energy norm, under the assumption of quasi-uniform meshes. In contrast to existing Ritz-type operators for NCVEMs (see, e.g., \cite{Zhao2019_parabolic,Li2021_IMA,Pei2023_CMA,Adak2023_M3AS}), the proposed construction requires only $H^2$-regularity of the exact solution and attains optimal rates under the mild assumption $u \in H^{2+s}(\Omega)$ for some $s>0$ (cf. Assumption~\ref{assump:reg:add}, Remarks~\ref{remark:Ritz:min:regu} and~\ref{remark:minimal:reg}).

As a result, the present work establishes a new setting for the error analysis of NCVEMs applied to time-
dependent biharmonic problems, encompassing more general and realistic scenarios, including non-convex
domains and a broad class of boundary conditions.

While a related approach was recently proposed in~\cite{DNR2025_JSC} within a lowest-order finite element framework—where the authors employ a companion-based Ritz projection to establish optimal convergence rates under minimal regularity assumptions for the extended Fisher–Kolmogorov equation with clamped boundary conditions—our setting entails additional and fundamental challenges. These stem from the consideration of different BCs, the use of high-order schemes, and the intrinsic construction of the VEM. In particular, the VEM framework introduces further difficulties due to the absence of explicit representations of the basis functions, which requires a careful definition of the discrete bilinear forms through suitable polynomial projection operators. As a consequence, a new type of error equation arises in the analysis (cf. identity~\eqref{main:semi:dis:err}), requiring the development of techniques that differ from those typically employed in the FEM context (see, e.g., Theorem~\ref{Error:Ellip}).

The construction of the proposed Ritz-type projection operators extends existing
companion-operator techniques to a more general and flexible setting. Although
inspired by the ideas introduced in~\cite{Khot2025_MathComp}, the present approach
introduces a new class of companion operators that map \(C^0\)-nonconforming
virtual element spaces into conforming continuous function spaces and allow for
a unified treatment of both \(C^0\)- and Morley-type VEMs. A key feature of this
construction is its ability to accommodate different classes of boundary
conditions within a single framework, as the operator is intrinsically tied to
the choice of degrees of freedom defining each virtual element space. These
companion operators are central to the definition of the Ritz-type projections
used in the unified error analysis (cf.~\eqref{Ritz:operator}).

Based on the above discussion, we summarize the main contributions of this work
to the development and analysis of NCVEMs as follows:
\begin{itemize}
	\item \textbf{Unified treatment of boundary conditions.}
	A unified analysis of time-dependent and nonlinear fourth-order problems
	under different boundary conditions, including nonstandard Cahn--Hilliard
	BCs as well as the classical clamped and Navier types.
	
	\item \textbf{Unified VEM framework.}
	A unified formulation of NCVEMs supporting the construction and analysis of
	high-order schemes on polygonal meshes, including $C^0$-nonconforming and
	Morley-type VEMs, for evolutionary nonlinear fourth-order problems posed on
	general polygonal (not necessarily convex) domains.
	
	\item \textbf{New error analysis via companion and Ritz-type operators.}
	The design and rigorous error analysis of new companion operators and
	Ritz-type projections, yielding a unified convergence theory under minimal
	regularity assumptions on the weak solution.
	
	\item \textbf{Applications and numerical validation.}
	The application of the proposed framework to a broad class of fourth-order
	models, together with numerical experiments that corroborate the theoretical
	error estimates and illustrate the robustness and performance of the
	resulting schemes; see Sections~\ref{SECTION:APPLICATIONS} and \ref{SECTION:NUMERICAL:EXPERIMENTS}.
\end{itemize}

\paragraph{Outline.}  The paper is organized as follows. Section~\ref{SECTION:MODEL:PROBLEM} introduces the notation, functional setting, and continuous problem, together with its variational formulation and well-posedness. An abstract VEM framework for time-dependent fourth-order problems and corresponding a priori error estimates for semi- and fully-discrete schemes are developed in Sections~\ref{SECTION:VEM:ABSTRACT} and~\ref{SECTION:ERROR:UNIFIED}. Specific $C^0$-nonconforming and fully nonconforming VEM spaces for biharmonic problems, along with the construction of the associated companion operators, are presented in Section~\ref{SECTION:VEM:SPACES}. Applications to different model problems, including the Extended Fisher–Kolmogorov equation, a fourth-order model with Cahn–Hilliard BCs, and Stokes flow in stream function form, are discussed in Section~\ref{SECTION:APPLICATIONS}. Numerical experiments validating the theoretical results are reported in Section~\ref{SECTION:NUMERICAL:EXPERIMENTS}, and concluding remarks and future perspectives are given in Section~\ref{SECTION:CONCLUSIONS}.

\setcounter{equation}{0}

\section{Preliminaries and continuous formulation}\label{SECTION:MODEL:PROBLEM}

In this section we shall introduce the notation of this paper and describe the model problem.
\subsection{Notation}
Throughout the paper, we have dealt with the following notations. Henceforth, $\O\subset \R^2$ be polygonal domain (non necessary convex) with boundary $\Gamma:=\partial\O$.  We denote the outward normal to $\Gamma$ by $\bn=(n_1,n_2)^\textrm{t}$, and the unit tangent vector of $\Gamma$ by $\bt= (t_1,t_2)^\textrm{t}$ and oriented such that $t_1=-n_2$, and $t_2=n_1$.

For each $\mathcal{D}$ open subdomain of $\Omega$, the space of square integrable
scalar functions will be denoted by $L^2(\mathcal{D})$ and endowed with the standard inner product
$(v,w)_{\mathcal{D}}:=\int_{\mathcal{D}}vw$. For each $s$, we define $H^s(\mathcal{D})$, the Sobolev space with standard seminorm and norm $|\cdot|_{s,\mathcal{D}}$  and $\|\cdot\|_{s,\mathcal{D}}$.

Let us denote by $t$ the time variable, which takes values in the interval $I:=(0,T]$, where $T$ is a given final time. Then, we define the time derivative
$\partial_t v:=\frac{d v}{dt}$, $\partial_{tt} v:=\frac{d^2 v}{dt^2}$. Moreover, we define the Bochner space $L^2(0,T;H^s(\mathcal{D}))$ and $L^{\infty}(0,T;H^s(\mathcal{D}))$ with the norm 
\begin{equation*}
	\begin{aligned}
		\|v\|_{L^2(0,t;H^s(\mathcal{D}))}:=\Bigg(\int_0^t \|v(\cdot, \sigma)\|_{s,\mathcal{D}}^2 d\sigma \Bigg)^{1/2}; \quad
		\|v\|_{L^{\infty}(0,t;H^s(\mathcal{D}))}:=\underset{0 \leq \sigma \leq t}{\text{ess sup}}\|v(\cdot, \sigma)\|_{s,\mathcal{D}}.
	\end{aligned}
\end{equation*} 
In addition, given any Hilbert space $V$,
we will denote by $\boldsymbol{V}:=[V]^2$ the space of
vectors functions with entries in $V$ and  we denote by $V'$ the dual space of $V$.

On the other hand, for each scalar and vector functions $v$ and $\bv$, we recall the following differential operators; $\div \bv:= \partial_1 v_1  + \partial_2 v_2$, 
$\rot \bv = \partial_1 v_2  - \partial_2 v_1,$
$\nabla v:=(\partial_1 v ,  \partial_2 v )^\textrm{t}$.
 Moreover,  $\D^2 v:=(\partial_{ij}v)_{1\le i,j\le2}$ and $\partial_{\bn} v:= \nabla v \cdot \bn$ denote the Hessian
 matrix and the normal derivative of $v$, respectively.
 
Throughout the paper, we also will employ the following spaces
\begin{equation*}
	\begin{split}
		H_0^1(\O) &:= \left\{ v \in H^1(\O): v= 0  \qon \Gamma\right\}, \\
		H_0^2(\O) &:= \left\{ v \in H^2(\O): v = \dn v = 0  \qon \Gamma\right\}, \\
		\widetilde{H}^2(\O)	&:= \left\{ v \in H^2(\O) :  \dn v = 0 \qon \Gamma \right\}.
	\end{split}	
\end{equation*}

Finally, we note that the symbol $C$, possibly endowed with subscripts,
superscripts, or modifiers such as $\widehat{C}$ or $\widetilde{C}$,
denotes a generic positive constant whose value may change from line to line.
Unless otherwise stated, this constant is independent of the mesh size and
time-step parameters, but it may depend on other quantities related to the
continuous problem or on mesh regularity parameters.

Moreover, for two nonnegative quantities $a$ and $b$, we write
$a \lesssim b$ and $a \gtrsim b$ to mean $a \leq C b$ and $a \geq C b$,
respectively, and we write $a \approx b$ when both inequalities hold.
	
\subsection{The model problem}
We consider the following fourth-order nonlinear reaction-diffusion equation: 
\begin{equation}\label{reac:diffusion-problem}
	\left\{
\begin{aligned}			
		\partial_t u + \alpha_1 \Delta^2 u  - \alpha_2 \Delta u 
		&=f(u) \:\:  \qquad\textrm{in} \quad\Omega \times (0,T],\\
		u(\bx,0)&=u_0(\bx)  \qquad\textrm{in} \quad\Omega,
		\end{aligned}
\right.
\end{equation}
associated with one of the following three BCs~\cite{Brenner-Monk-Sun2014}:
\begin{align}
	\text{Clamped BCs ($\CP$):}& \quad  u=\dn u=0 \quad\quad  \qquad \text{on} \quad \Gamma \times (0,T]; \label{Campled:BCs}\\
	\text{Navier/Simply supported BCs ($\SSP$):}& \quad u=\Delta u=0  \qquad  \: \quad \quad \text{on} \quad  \Gamma \times (0,T];\label{supported:BCs}\\
	\text{Cahn--Hilliard BCs ($\CH$):}& \quad \dn u=\dn \Delta u=0   \quad \quad \text{on} \quad  \Gamma \times (0,T]. \label{Cahn-Hilliard:BCs} 
\end{align}
Here, the parameters $\alpha_i$, with $i \in \{1,2\}$, are positive constants and the nonlinear force function $f$ satisfies the Lipschitz continuity condition, i.e., there exists $L_{f}>0$ such that
\begin{equation}\label{ineq:Lipt:f}
	|f(x)-f(y)| \leq L_{f} |x-y| \quad \forall x,y \in \R.
\end{equation}

The main goal of this section is to set up a unified primal framework with any one of the BCs
stated in \eqref{Campled:BCs}-\eqref{Cahn-Hilliard:BCs}. Thus, using twice integration by parts and the BCs the weak formulation that corresponds to \eqref{reac:diffusion-problem}, read as: seek 
$$u\in L^2(0,T;V),$$
 such that for a.e. $t \in (0,T]$
\begin{equation}\label{conti:weak:form}
	\left\{
	\begin{aligned}	
	M(\partial_t u,v)+ \alpha_1 A(u,v)+ \alpha_2 B(u,v)&= F(u;v) \qquad \forall v \in V,\\
	u(0) &= u_0, 
		\end{aligned}
	\right.
\end{equation}	
where the  forms are defined by
\begin{align}
	M(\cdot,\cdot): V \times V&\to \R, \qquad
	M(v, w) :=  (v, w)_{\O} \label{bilinear-contM}
	\\
	A(\cdot,\cdot): V \times V &\to \R, \qquad 
	A(v, w):=  (\D^2 v,\D^2 w)_{\O}  \label{bilinear-contA} \\
	B(\cdot,\cdot): V \times V &\to \R, \qquad 
	B(v, w):=(\nabla v,  \nabla w)_{\O}, \label{bilinear-contB}\\
	F(v;\cdot): V  &\to \R, \qquad  F(v;w):= (f(v),w)_{\O}  \label{contF}
\end{align}
and 
\begin{equation}\label{cont:space:V}
	V := \begin{cases}
		\HdoO& \quad \text{for \CP};\\
		H^2(\O) \cap \HuoO& \quad \text{for \SSP};\\
		\widetilde{H}^2(\O)& \quad \text{for \CH}.
	\end{cases}
\end{equation}

The following result establishes the well-posedness of the continuous weak formulation~\eqref{conti:weak:form}.
\begin{theorem}
	Assume that $f$ satisfies the Lipschitz continuity condition \eqref{ineq:Lipt:f} and $u_0 \in V$. Then, there exists a unique solution $u(\cdot, t) \in V$ to problem~\eqref{conti:weak:form}.
\end{theorem}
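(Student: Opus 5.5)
We will establish existence and uniqueness by a Faedo--Galerkin argument combined with energy estimates and a Gronwall-type argument. The Lipschitz condition \eqref{ineq:Lipt:f} makes the nonlinearity globally Lipschitz from $L^2(\O)$ into $L^2(\O)$.

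\textbf{Coercivity.} We first check that the elliptic part $a(v,w):=\alpha_1 A(v,w)+\alpha_2 B(v,w)$ satisfies a G\aa rding inequality on $V$:
\begin{equation*}
a(v,v)+\lambda\, M(v,v)\gtrsim \|v\|_{2,\O}^2 \qquad \forall v\in V .
\end{equation*}
For \CP{} this holds with $\lambda=0$, since $|\cdot|_{2,\O}$ is a norm on $\HdoO$. For \SSP{} we use $|v|_{2,\O}\ge C|v|_{1,\O}$ (Poincar\'e applied to $\nabla v$ is not directly available). Instead we simply take $\lambda=1$ and use the equivalence $\|v\|_{2,\O}^2\approx |v|_{2,\O}^2+\|v\|_{0,\O}^2$ on $H^2(\O)$. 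The same choice $\lambda=1$ works for \CH. With this, the shifted form is continuous and coercive on $V$.

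\textbf{Galerkin step.} Since $V$ is separable, we pick a basis $\{w_j\}$ of $V$ that is orthonormal in $L^2(\O)$, for instance the eigenfunctions of the shifted form $a+M$ restricted to $V$. The Galerkin problem in $V_m=\mathrm{span}\{w_1,\dots,w_m\}$ is a system of ODEs $\dot c = -\mathbf{A}c+\mathbf{F}(c)$. Here $\mathbf{F}$ is globally Lipschitz by \eqref{ineq:Lipt:f}, so Picard--Lindel\"of gives a unique global solution $u_m$ on $[0,T]$. Taking $v=u_m$ and using $(f(u_m),u_m)\le (|f(0)|+L_f|u_m|,|u_m|)$, we obtain a uniform bound in $L^\infty(0,T;L^2)\cap L^2(0,T;V)$. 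Taking $v=\partial_t u_m$ and using $u_0\in V$ (with the projection $u_m(0)\to u_0$ in $V$), we obtain a uniform bound in $L^\infty(0,T;V)$ with $\partial_t u_m\in L^2(0,T;L^2)$. Extracting weakly-$*$ convergent subsequences and using Aubin--Lions (compactness of $V\hookrightarrow L^2$), we get $u_m\to u$ strongly in $L^2(0,T;L^2)$. This gives $f(u_m)\to f(u)$ in $L^2(0,T;L^2)$ by the Lipschitz property, so we can pass to the limit in \eqref{conti:weak:form}. The initial condition holds because $u\in C([0,T];L^2)$.

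\textbf{Uniqueness.} For two solutions $u_1,u_2$, we set $e=u_1-u_2$ and test with $e$. Since $a(e,e)\ge0$, this gives
\begin{equation*}
\tfrac12\tfrac{d}{dt}\|e\|_{0,\O}^2\le L_f\|e\|_{0,\O}^2 ,
\end{equation*}
and Gronwall together with $e(0)=0$ yields $e\equiv0$.

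The main obstacle is the passage to the limit in the nonlinear term, which requires strong $L^2$ compactness. It is resolved by the $\partial_t u_m$ estimate combined with Aubin--Lions. An alternative that avoids compactness is a Banach fixed-point argument on $C([0,T];L^2)$ for the map $w\mapsto u$ solving the linear problem with right-hand side $f(w)$. Linear well-posedness follows from Lions' theorem thanks to the G\aa rding inequality, and the map is a contraction in an exponentially weighted norm.
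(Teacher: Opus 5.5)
Your proof is correct, and it supplies more than the paper does. The paper gives no argument for this theorem; it only remarks that the result follows by combining \cite{Gudi-Gupta_camwa2013,Danumjaya-Pani_2006}. Those works treat the extended Fisher--Kolmogorov equation with the cubic nonlinearity, where the a priori bounds come from the energy functional associated with the cubic term.

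Under the hypothesis actually stated here, the global Lipschitz bound \eqref{ineq:Lipt:f}, that structure is not needed. Your route is self-contained and matched to the stated assumptions:
\begin{itemize}
\item global Picard solvability of the Galerkin ODE;
\item the two energy estimates obtained by testing with $u_m$ and $\partial_t u_m$, closed by Gronwall;
\item Aubin--Lions compactness to pass to the limit in $f(u_m)$;
\item an $L^2$-Gronwall uniqueness argument.
\end{itemize}
Your shift by $\lambda M$ treats the three boundary conditions uniformly, including \CH{}. This is the same device the paper later uses through $\alpha_0$ in \eqref{sum:forms}. In exchange, the citation route covers $f(u)=u-u^3$ from Section~\ref{SECTION:APPLICATIONS}. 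That nonlinearity is not globally Lipschitz, so neither the theorem as stated nor your proof reaches it.

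There are two small points to tidy.
\begin{itemize}
\item Your sentence on the \SSP{} case is garbled. For $v\in H^2(\O)\cap H^1_0(\O)$ one has $|v|_{1,\O}^2=-(\Delta v,v)_{\O}$. Poincar\'e then gives $|v|_{1,\O}\lesssim\|\Delta v\|_{0,\O}\lesssim|v|_{2,\O}$, so $\lambda=0$ works there just as for \CP{}. Only \CH{} genuinely needs $\lambda>0$, because constants lie in the kernel of $\alpha_1A+\alpha_2B$; taking $\lambda=1$ throughout is harmless.
\item For uniqueness, fix the solution class, e.g.\ $u\in L^2(0,T;V)$ with $\partial_t u\in L^2(0,T;V')$, which the equation itself forces. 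The identity $\tfrac{d}{dt}\|e\|_{0,\O}^2=2\langle\partial_t e,e\rangle$ is then justified by the Lions--Magenes lemma.
\end{itemize}
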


\begin{proof}
	The proof follows by combining the arguments presented in~\cite{Gudi-Gupta_camwa2013,Danumjaya-Pani_2006}. 
	
\end{proof}

We conclude this section with the following remark concerning the boundary conditions of the problem considered in this work.

\begin{remark}\label{remark:boundary-apps}
For the {\bf CH} boundary conditions in~\eqref{Cahn-Hilliard:BCs}, no-flux constraints are imposed on both $u$ and $\Delta u$ along the boundary of the domain, ensuring the conservation of mass and energy. This setting is central to the analysis of Cahn--Hilliard-type problems; see, for instance,~\cite{Elliott1989_SINUM,Dedner2024_JSC}.

In contrast, the boundary conditions~\eqref{Campled:BCs} and~\eqref{supported:BCs}, which correspond to clamped and Navier configurations, respectively. These classical conditions arise in structural vibrations, frequency analysis, and a broad range of applications, including wave propagation, reaction--diffusion systems, and fluid flow models formulated in terms of the stream function~\cite{Li2021_IMA,Dedner2022_IMA,Pei2023_CMA,Li2023_IMA,GR,Mora2021_IMA}.

\end{remark}
\setcounter{equation}{0}
\section{The virtual element method in an abstract framework}\label{SECTION:VEM:ABSTRACT}
In this section, we will introduce the main ingredients and basic setting needed for the construction of NCVEMs in a unified way. We extend the approach presented in~\cite{Dedner2022_IMA} to the time dependent problems with the three different 
BCs $\CP, \SSP$ and $\CH$ (cf. \eqref{Campled:BCs}-\eqref{Cahn-Hilliard:BCs}).

\subsection{Discrete setting and notation}
From now on, we denote by $\E$ a general polygon, $e$ a general edge of $\partial\E$, 
while the symbols $h_\E$ and $h_e$ will denote the diameter of $\E$ and the length of $e$, respectively.  
Let $\{\O_h\}_{h>0}$ be a sequence of decompositions of $\O$ into general non-overlapping polygons $\E$, where 
$$h:=\max_{\E\in\O_h}h_\E.$$
For each element $\E$ we denote by $\EEh^\E$ the set of its edges, while  the set of  all the 
edges in $\O_h$ will be denote by $\EE^{{\rm tot}}_h$. We decompose this set as
the following union: $\EE^{{\rm tot}}_h := \EEi \cup \EEbdry$, where $\EEi$ and $\EEbdry$ are the set of
interior and boundary edges, respectively.  For the set of all the vertices we have an analogous 
notation, i.e., we  denote  by  $\VV^{{\rm tot}}_h := \VVi \cup \VVbdry$ the set of vertices
in $\O_h$, where  $\VVi$ and $\VVbdry$ are the set of interior and boundary vertices, respectively. 
For each element $\E$ we denote by $\VV_h^\E$ the set of its vertices.

For each element $\E \in \O_h$, we denote by $\bn_{\E}$ its unit outward normal vector 
and by $\bt_{\E}$ its tangential vector along the boundary $\partial \E$. We assume a local
orientation of $\partial \E $ so that $\bn_{\E}$ point out of $\E$.  Moreover, we adopt the notation $\bn_e$ and $\bt_{e}$ for a unit normal and tangential vector of an edge $e \in \EE^{{\rm tot}}_h$, respectively.
Furthermore, for each $\E$ and  any integer $\ell\geq 0$, we introduce the following spaces:
\begin{itemize}
	\item For any  open bounded subdomain $\mathcal{D} \subset\R^2$, we define the space $\P_{\ell}(\mathcal{D})$ as the space of polynomials of degree  up to $\ell$ defined on $\mathcal{D}$  and we denote by $\PP_{\ell}(\mathcal{D})$ 
	its vectorial version, i.e.,  $\PP_{\ell}(\mathcal{D}):=[\P_{\ell}(\mathcal{D})]^2$.  For $\ell=-1,-2$, we conventionally assume that $\mathbb{P}_{\ell}(\mathcal{D})=\{ 0 \}$; 
	\item  We define the discontinuous piecewise $\ell$-order polynomial by: 
	\begin{equation*}
		\P_{\ell}(\O_h):= \big\{ q \in L^2(\O): q|_{\E} \in \P_{\ell}(\E)  \quad \forall \E \in \O_h \big\}.
	\end{equation*}
\end{itemize}
In this work  the degree of accuracy of the VEM will be $k \geq 2$, then  
we need to introduce an appropriately scaled basis for the space of polynomials $\P_{k}(\E)$. 
First, we denote by $\M^{\ast}_{\ell}(\E)$  the set of
the two-dimensional scaled monomials defined on each polygon $\E$ as follows:
\begin{equation*}
	\M^{\ast}_{\ell}(\E):=\left\{ \Big(\frac{\bx-\bx_{\E}}{h_\E}\Big)^{\boldsymbol{\beta}} : |\boldsymbol{\beta}| = \ell  \right\},
\end{equation*}
where we have used the classical notation for a multi-index $\boldsymbol{\beta}$ and $\bx_{\E}$ is the barycenter of $\E$. Then, we define  
$$\M_k(\E):= \bigcup_{\ell \leq k} \M^{\ast}_{\ell}(\E)=:\{m_{j} \}_{j=1}^{d_{k}},$$ as a basis of $\P_{k}(\E)$, where  $d_k= \dim(\P_k(\E))$.  For $\ell=-1,-2$, we conventionally assume that $\M_{\ell}(\mathcal{D})=\{ 0 \}$. 

Analogously, we consider the set of the scaled monomials defined on each edge $e$:
\[
\M_{\ell}(e):=\left\{1, \frac{x-x_e}{h_e}, \Big(\frac{x-x_e}{h_e}\Big)^2, \ldots, \Big(\frac{x-x_e}{h_e}\Big)^{\ell}\right\},
\] 
where $x_e$ is the midpoint of $e$.

For every $\delta>0$, we define the following broken Sobolev spaces
\begin{equation*}
	H^{\delta}(\O_h):= \{ v \in L^2(\O): v|_{\E} \in H^{\delta}(\E)  \quad \forall \E \in \O_h\},
\end{equation*}
and we endow these spaces with the following broken seminorm:
\begin{equation*}
	|v|_{\delta,h}:=\Big( \: \sum_{\E\in\O_h}|v|_{\delta,\E}^{2} \Big)^{1/2},
\end{equation*}
where $|\cdot|_{\delta,\E}$ is the usual seminorm in $H^{\delta}(\E)$.

Next, we will define the jump operator. First, for each $v_h \in H^2(\O_h)$,  
we denote by $v_h^{\pm}$  the trace of $v_h|_{\E^{\pm}}$, with 
$e \subset \partial \E^{+} \cap \partial \E^{-}$. 
Then, the jump operator $\jump{\cdot}$ is defined as follows:
\begin{equation*}
	\jump{v_h} 
	:= \begin{cases}
		v_h^{+}-v_h^{-} & \text{for every  $e \in \EEi$,}\\
		v_h|_e  &  \text{for every  $e \in \EEbdry$}.
	\end{cases}
\end{equation*}

Let $\mathcal{C}^0(\VVi)$ and $\mathcal{C}^0(\VV^{{\rm tot}}_h)$ be the sets of functions
continuous at internal vertices and at all the vertices of $\O_h$, respectively. Then, we consider the space
\begin{equation*}
	\mathcal{H}^0(\O_h) := \begin{cases}
		\Big\{ v_h \in H^2(\O_h) : v_h  \in \mathcal{C}^0(\VVi), \: \: v_h(\vb_i)=0 \quad  \forall \vb_i \in \VVbdry	\Big\}	& \quad \text{for \CP \: and \SSP};\vspace*{2mm}\\ 
		\Big\{ v_h \in H^2(\O_h) : v_h  \in \mathcal{C}^0(\VV^{{\rm tot}}_h)  	\Big\}& \quad \text{for \CH}.
	\end{cases}
\end{equation*}

Now, with the space define above  we shall introduce the following subspaces of $H^2(\O_h)$: 
	\begin{equation}\label{H2:nonconf}
		\begin{split}
	\HncT := \Big\{ v_h  \in \mathcal{H}^0(\O_h) : & \quad (\jump{ v_h},p)_{0,e}=0 \quad \: \forall p \in \P_{k-3}(e) \quad \forall  e \in \EE^{\ast}_h, \\
	& \quad  (\jump{\partial_{\bn_e} v_h},q)_{0,e}=0 \quad \: \forall q \in \P_{k-2}(e) \quad \forall  e \in \EE^{{\rm tot}}_h \Big\},
		\end{split}
	\end{equation}
where 
\begin{equation*}
\EE^{\ast}_h = 	 
\begin{cases}
\EE^{{\rm tot}}_h   & \quad \text{for \CP \: and \SSP};\vspace*{2mm}\\	
\EE^{\inte}_h   & \quad \text{for \CH}.
\end{cases}	
\end{equation*}

We assume that the NCVEMs we want to construct, satisfy  the following fundamental ingredients:
\begin{assumption}\label{assump:VEM}
	Assume the following holds for any fixed $h > 0$ and for a fixed integer $k \geq 2$.	
	
	\begin{itemize}
		\item[${\bf (A1)}$] The mesh $\O_h$ consists only of simple polygons. More precisely, a simple polygon refers to the fact that the boundary of each element must not intersect itself and is made up of a finite number of straight line segments.
		
		\item[${\bf (A2)}$] The finite dimensional function space, which will be denote by $V_{k}^h$, satisfies $\P_{k}(\O_h) \subseteq V_{k}^h  \subset \HncT$.
		Moreover, we denote the restriction of this global VEM space  $V_{k}^h$  to an element $\E$ by $V_{k}^h(\E):= V_{k}^h|_{\E}$ and this space must contain the polynomial space $\P_k(\E)$. 
		
		More precisely, the global virtual element space is defined as: 
		\begin{equation}\label{global:VEM}
			\Vh := \left\{ v_h \in \HncT : v_h|_{\E} \in \VK \quad   \forall \E \in \O_h  \right\}.
		\end{equation} 
		
		\item[${\bf (A3)}$]  There exist discrete bilinear forms $M_h(\cdot,\cdot), A_h(\cdot,\cdot)$ and $B_h(\cdot,\cdot)$ approximating the continuous  bilinear forms
		$M(\cdot,\cdot), A(\cdot,\cdot)$, and $B(\cdot,\cdot)$ defined in \eqref{bilinear-contM}, \eqref{bilinear-contA}, and \eqref{bilinear-contB}, respectively. These  discrete forms may be split over the elements in the mesh $\O_h$ as:
		\begin{align}
			M_h: V_{k}^h  \times V_{k}^h  \to \R,& \qquad M_h(v_h,w_h) := \sum_{\E \in \O_h} M^{\E}_h(v_h,w_h), \label{general:discrete:Mh}\\
			A_h: V_{k}^h  \times V_{k}^h  \to \R,&  \qquad A_h(v_h,w_h) := \sum_{\E \in \O_h} A^{\E}_h(v_h,w_h), \label{general:discrete:Ah}\\
			B_h: V_{k}^h  \times V_{k}^h  \to \R,&  \qquad B_h(v_h,w_h) := \sum_{\E \in \O_h} B^{\E}_h(v_h,w_h), \label{general:discrete:Bh}
		\end{align}
	where the local bilinear forms
	\( M_h^{\E}: V_k^h(\E) \times V_k^h(\E) \to \mathbb{R} \),
	\( A_h^{\E}: V_k^h(\E) \times V_k^h(\E) \to \mathbb{R} \), and
	\( B_h^{\E}: V_k^h(\E) \times V_k^h(\E) \to \mathbb{R} \)
	are suitable approximations of the corresponding continuous bilinear forms
	restricted to the element \( \E \). 
		
		\item[${\bf (A4)}$] There exists  $F_h(\cdot; \cdot): V_{k}^h \to \R$, which approximates the forcing term $F(\cdot; \cdot)$ defined in~\eqref{contF}.
		
		\item[${\bf (A5)}$]  The local forms $M_h^{\E}(\cdot,\cdot)$, $A_{h}^{\E}(\cdot,\cdot)$ and  $B^{\E}_h(\cdot,\cdot)$ satisfy the following properties:
		\begin{itemize}
			\item \textit{$k$-consistency}: for all $\E\in\O_h$, we have that
		\begin{equation*}
			\begin{alignedat}{3}
				M_{h}^{\E}(q,v_h) &= M^{\E}(q,v_h)
				&\quad& \forall q\in\P_k(\E)
				&\quad& \forall v_h\in V_{k}^h(\E),\\
				A_{h}^{\E}(q,v_h) &= A^{\E}(q,v_h)
				&& \forall q\in\P_k(\E)
				&& \forall v_h\in V_{k}^h(\E),\\
				B_{h}^{\E}(q,v_h) &= B^{\E}(q,v_h)
				&& \forall q\in\P_k(\E)
				&& \forall v_h\in V_{k}^h(\E).
			\end{alignedat}
		\end{equation*}
			\item \textit{Stability}: there exist positive constants independent of $\E$, such that:
		\begin{equation*}
			\begin{alignedat}{3}
				M_{h}^{\E}(v_h,v_h) &\approx M^{\E}(v_h,v_h)
				&\quad& \forall v_h\in V_{k}^h(\E),\\
				A_{h}^{\E}(v_h,v_h) &\approx A^{\E}(v_h,v_h)
				&& \forall v_h\in V_{k}^h(\E),\\
				B_{h}^{\E}(v_h,v_h) &\approx B^{\E}(v_h,v_h)
				&& \forall v_h\in V_{k}^h(\E).
			\end{alignedat}
		\end{equation*}
		\end{itemize}
		
	\end{itemize}	
\end{assumption}

	An explicit representation for the multilinear forms will be provided in subsection~\ref{polynomialproy} by using  the local forms defined in \eqref{local:formM}, \eqref{local:formA}, \eqref{local:formB} and \eqref{local:formF}.

\subsection{The degrees of freedom}
In this subsection, we recall the concept of a degrees of freedom tuple (DoFs-tuple), used to generically describe the degrees of freedom (DoFs) relating to a VE space on each element of the polygonal decomposition. For this purpose, we will follow some tools of the approach recently presented in~\cite{Dedner2022_IMA}. 
In what follows, let $\E \in \O_h$ be a fixed polygon. 
\begin{definition}[DoFs-tuple]\label{def:dof:tuple}  
	For $H^2$-nonconforming virtual element spaces, let the DoFs-tuple, $\boldsymbol{\xi} \in \mathbb{Z}^{4}$, be defined as  
	\begin{align}\label{dof:tuple}
		\boldsymbol{\xi} = \big( \vertexValueDofs, \edgeValueDofs,\edgeNormalDofs ,\innerDofs \big).
	\end{align}
	The entries correspond to the number of moments used in the definition of our DoFs (see below in Definitions~\ref{def:local:dofs} and~\ref{def:global:dofs}). The indexes $d^{\vb}_0$ and $d^e_i$, for $i=0,1$, encoding the information for the vertex and edge moments, respectively,  while the index $\innerDofs$ have the information of the inner moments. The subscript $i=0$ corresponds to moments of the function values and $i=1$ to derivative moments on  edges.
\end{definition}
From the DoFs-tuple we are able to define the corresponding local DoFs.
\begin{definition}[Associated local DoFs]\label{def:local:dofs}
	For a function $v_h \in H^2(\E)$, the local DoFs $\Lambda^{\E}_{\boldsymbol{\xi}}$ associated to the DoFs-tuple $\boldsymbol{\xi}$ defined in \eqref{dof:tuple} are given by the following set of linear operators:
	\begin{itemize}
		\item[${\bf (D1)}$] The values of $ v_h$ at each vertex $\vb \in \VV_h^{\E}$.
		\item[${\bf (D2)}$] The moments of $\partial_{\bn_e}^iv_h$ up to order $d^e_i$ on each $e \in \EE_h^{\E}$, for $i =0,1,$
		\begin{align*}
			h_{e}^{-1+i} ( \partial_{\bn_e}^{i} v_h, p)_{e}  \quad \forall p \in \M_{d^e_i}(e).
		\end{align*} 
		\item[${\bf (D3)}$] The  moments of $v_h$ up to order $\innerDofs$ inside $\E$,
		\begin{align*}
			h^{-2}_{\E} ( v_h, p )_{\E} \quad \forall p \in \M_{\innerDofs}(\E).
		\end{align*} 
		\end{itemize}		
\end{definition}

To fix ideas and avoid unnecessary technical difficulties, we introduce the following definition.

\begin{definition}
	Let $ k \geq 2 $. The entries of the DoFs tuple associated with an
	$ H^{2} $-nonconforming discretization are required to satisfy
	\begin{equation*}
		d^{\vb}_0 = 0, \qquad
		d^e_0 \geq k-3, \qquad
		d^e_1 \geq k-3, \qquad
		\text{and} \qquad
		\innerDofs \geq k-4.
	\end{equation*}
\end{definition}

\begin{remark}
	For notational consistency, we adopt the standard convention
	$ \partial_{\bn_e}^0 v_h = v_h $. 
	Moreover, whenever an entry of the DoFs tuple $ \boldsymbol{\xi} $ is equal to zero,
	this indicates that constant moments are employed.
\end{remark}

Under the above conventions, we now state the following fundamental assumption.

\begin{assumption}[Unisolvency]\label{assump:unisolvency}
	The set of degrees of freedom $ \mathbf{(D1)}\text{--}\mathbf{(D3)} $
	is unisolvent in the space $ \VK $.
\end{assumption}

As a consequence of Assumptions~\ref{assump:VEM} and~\ref{assump:unisolvency},
the local virtual element space $ \VK $ is well defined and finite dimensional.
We denote by $ d_k $ the dimension of $ \VK $, and by
$ \dof_i(\cdot) $, $ 1 \leq i \leq d_k $, the linear functional that associates to any sufficiently smooth function $ v $
its $ i $-th local degree of freedom $ \dof_i(v) $.

To illustrate these concepts, we next present examples of DoFs tuples arising from commonly used VEM spaces.
Further discussion and motivation for these choices are deferred to
Section~\ref{SECTION:VEM:SPACES}.

\begin{example}\label{example:dofs}
	Let $ v_h \in H^2(\E) $. 
	The DoFs tuple $ \boldsymbol{\xi}^{nc} $ associated with the
	$ C^0 $-nonconforming VEM introduced in~\cite{Zhao2016_M3AS} is given by
	\begin{equation*}
		\boldsymbol{\xi}^{nc} := (0,k-2,k-2,k-4).
	\end{equation*}
	On the other hand, the DoFs tuple $ \boldsymbol{\xi}^{fnc} $ corresponding to the
	Morley-type VEM considered in~\cite{Antonietti2018_M3AS,Zhao2018_JSC}
	is defined as
	\begin{equation*}
		\boldsymbol{\xi}^{fnc} := (0,k-3,k-2,k-4).
	\end{equation*}
	A visualization of these DoFs for $k=2,3,4$ on a pentagonal element
	is shown in Figure~\ref{fig:Dofs:pentagon}.
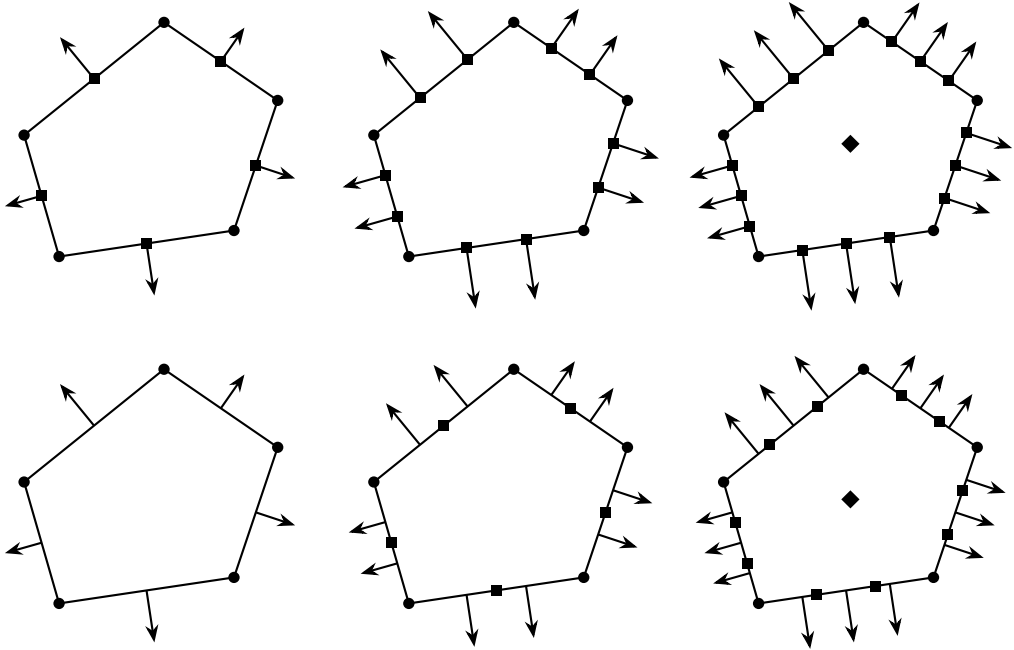
\begin{figure}[ht!]
	\centering
	\begin{tikzpicture}[scale=1.15]
		
		\tikzset{
			vertex/.style={circle, fill=black, inner sep=1.5pt},
			mid/.style={rectangle, fill=black, inner sep=2pt},
			edgedofpoint/.style={rectangle, fill=black, inner sep=2pt},
			edgedof/.style={-Stealth, thick},
			poly/.style={black, thick},
            interiordof/.style={diamond, fill=black, inner sep=1.75pt}
		}
		
		\def\Pentagon{
			\coordinate (A) at (0,0);
			\coordinate (B) at (2.0,0.3);
			\coordinate (C) at (2.5,1.8);
			\coordinate (D) at (1.2,2.7);
			\coordinate (E) at (-0.4,1.4);
			\draw[poly] (A)--(B)--(C)--(D)--(E)--cycle;
			\foreach \p in {A,B,C,D,E}
			\node[vertex] at (\p) {};
		}
		
		
		\begin{scope}[xshift=-2cm,yshift=2cm]
			\Pentagon
			\foreach \P/\Q in {A/B,B/C,C/D,D/E,E/A}
			\node[mid] at ($(\P)!0.5!(\Q)$) {};
			\foreach \P/\Q in {A/B,B/C,C/D,D/E,E/A}{
				\draw[edgedof]
				let \p1=(\P), \p2=(\Q) in
				($(\P)!0.5!(\Q)$)
				-- ++({(\y2-\y1)*0.3},{-(\x2-\x1)*0.3});
			}
		\end{scope}
		
		\begin{scope}[xshift=2cm,yshift=2cm]
			\Pentagon
			\foreach \P/\Q in {A/B,B/C,C/D,D/E,E/A}{
				\node[edgedofpoint] at ($(\P)!0.33!(\Q)$) {};
				\node[edgedofpoint] at ($(\P)!0.67!(\Q)$) {};
				\draw[edgedof]
				let \p1=(\P), \p2=(\Q) in
				($(\P)!0.33!(\Q)$)
				-- ++({(\y2-\y1)*0.35},{-(\x2-\x1)*0.35});
				\draw[edgedof]
				let \p1=(\P), \p2=(\Q) in
				($(\P)!0.67!(\Q)$)
				-- ++({(\y2-\y1)*0.35},{-(\x2-\x1)*0.35});
			}
		\end{scope}
		
	\begin{scope}[xshift=6cm,yshift=2cm]
		\Pentagon
		
		\node[interiordof] at (1.05,1.3) {};
		
		\foreach \P/\Q in {A/B,B/C,C/D,D/E,E/A}{
			
			\node[edgedofpoint] at ($(\P)!0.25!(\Q)$) {};
			\node[edgedofpoint] at ($(\P)!0.5!(\Q)$) {};
			\node[edgedofpoint] at ($(\P)!0.75!(\Q)$) {};
			
			\foreach \t in {0.25,0.5,0.75}{
				\draw[edgedof]
				let \p1=(\P), \p2=(\Q) in
				($(\P)!\t!(\Q)$)
				-- ++({(\y2-\y1)*0.35},{-(\x2-\x1)*0.35});
			}
		}
	\end{scope}	
		
		
		\begin{scope}[xshift=-2cm,yshift=-2cm]
			\Pentagon
			\foreach \P/\Q in {A/B,B/C,C/D,D/E,E/A}{
				\draw[edgedof]
				let \p1=(\P), \p2=(\Q) in
				($(\P)!0.5!(\Q)$)
				-- ++({(\y2-\y1)*0.3},{-(\x2-\x1)*0.3});
			}
		\end{scope}
		
\begin{scope}[xshift=2cm,yshift=-2cm]
	\Pentagon
	\foreach \P/\Q in {A/B,B/C,C/D,D/E,E/A}{
		
		\node[mid] at ($(\P)!0.5!(\Q)$) {};
		
		\draw[edgedof]
		let \p1=(\P), \p2=(\Q) in
		($(\P)!0.33!(\Q)$)
		-- ++({(\y2-\y1)*0.3},{-(\x2-\x1)*0.3});
		
		\draw[edgedof]
		let \p1=(\P), \p2=(\Q) in
		($(\P)!0.67!(\Q)$)
		-- ++({(\y2-\y1)*0.3},{-(\x2-\x1)*0.3});
	}
\end{scope}

\begin{scope}[xshift=6cm,yshift=-2cm]
	\Pentagon
	
\node[interiordof] at (1.05,1.2) {};
	
	\foreach \P/\Q in {A/B,B/C,C/D,D/E,E/A}{
		
		\node[mid] at ($(\P)!0.33!(\Q)$) {};
		\node[mid] at ($(\P)!0.67!(\Q)$) {};
		
		\foreach \t in {0.25,0.5,0.75}{
			\draw[edgedof]
			let \p1=(\P), \p2=(\Q) in
			($(\P)!\t!(\Q)$)
			-- ++({(\y2-\y1)*0.3},{-(\x2-\x1)*0.3});
		}
	}
\end{scope}
\end{tikzpicture}
	\caption{DoFs of the $C^0$-nonconforming (top) and Morley-type (bottom) VEMs on a pentagonal element for $k=2,3,4$. Dots, squares, arrows, and diamonds correspond to $\mathbf{(D1)}$, $\mathbf{(D2)}$ ($i=0,1$), and $\mathbf{(D3)}$, respectively.}\label{fig:Dofs:pentagon}
\end{figure}
\end{example}

\subsection{Polynomial projections and the discrete multilinear forms}\label{polynomialproy}

Now, we will recall the definition of some polynomial projections,  
which  will be useful to build  explicit representations  for the bilinear forms 
$M_{h}(\cdot,\cdot)$, $A_{h}(\cdot,\cdot)$, and $B_h(\cdot,\cdot)$ along with the 
discrete functional $F_h(\cdot; \cdot)$ satisfying the Assumptions ${\bf (A3)}-{\bf (A5)}$.

We start with the definition of  the usual local $L^2$-projections onto the polynomial scalar space $\P_{n}(\E)$ and its vectorial version, with $n \in \mathbb{N} \cup \{0\}$:
\begin{itemize}
	\item {\bf $L^2$-projection} $\Pi_{\E}^{n}: L^2(\E) \to \mathbb{P}_n(\E)$, given by 
	\begin{equation}\label{ProyL2Pk}
		( q_{n}, v-\Pi_{\E}^{n} v)_{\E}  =0 
		\qquad 	\forall  v \in L^2(\E) \quad \text{and} 	\qquad\forall q_{n}\in\P_{n}(\E).
	\end{equation}
	\item  {\bf $\LL^2$-projection} $\boldsymbol{\Pi}_{\E}^{n}: \LL^2(\E) \to \PP_n(\E)$, given by 
	\begin{equation}\label{ProyL2Pk-vectorial}
		( \boldsymbol{q}_{n}, \bv-\boldsymbol{\Pi}_{\E}^{n} \bv)_{\E}  =\0 
		\qquad 	\forall  \bv \in \LL^2(\E) \quad \text{and} 	\qquad\forall  \boldsymbol{q}_{n}\in\PP_{n}(\E).
	\end{equation}
\end{itemize}
Now, we continue with the construction of  the classical $H^1$- and $H^2$-VEM projections, constructed for each $k \geq 2$:
\begin{itemize}
	\item the {\bf $H^1$-seminorm projection} $\PinablaK: H^1(\E) \to \mathbb{P}_k(\E)$, given by 	
	\begin{equation}\label{ProyH1}
		\left\{ 	\begin{array}{ll}
			B^{\E}(v-\PinablaK v,p_{k})=0\qquad \forall  v \in H^1(\E) \quad \text{and} \quad \forall p_{k} \in \mathbb{P}_{k}(\E),\\[2ex]
			\displaystyle	 P_0 (v- \PinablaK v) =0.
		\end{array} \right.
	\end{equation}
	
	\item {\bf $H^2$-seminorm projection} $\PiK: H^2(\E) \to \mathbb{P}_k(\E)$, given by 	
	\begin{equation}\label{H2:VEM:operator}
		\left\{ 	\begin{array}{ll}
			A^{\E}( v-\PiK v, p_k ) \qquad \forall  v \in H^2(\E) \quad \text{and} \quad \forall p_k \in \mathbb{P}_k(\E),\\[2ex]
			\displaystyle P_0 (v- \PiK v) =0,  \qquad 
			\displaystyle 	 P_0 \big(\nabla(v- \PiK v)\big) =\0,
		\end{array} \right.
	\end{equation}
	where the projection  $P_0(\cdot)$ is defined by 
	\begin{equation*}
		P_0(v_h) = (v_h,1)_{\partial \E}.
	\end{equation*}
\end{itemize}

We also consider the global counterpart of the $L^2$-projection~\eqref{ProyL2Pk}, $\Pi_{h}^{k}: L^2(\O) \to \mathbb{P}_k(\O_h)$, defined for all $\E \in \O_h$ by 
\[
\big(\Pi_h^{k} v \big)|_{\E}  = \Pi^{k}_{\E} v.
\]
\begin{assumption}[Computability of the polynomial projections]\label{assump:projections}
	For each $v_h \in V_k^h(\E)$ we have that the polynomial functions $\PioKk v_h, \PimunoK\nabla v_h$, $\PinablaK v_h$ and $\PiK v_h$ defined in \eqref{ProyL2Pk}, \eqref{ProyL2Pk-vectorial}, \eqref{ProyH1} and \eqref{H2:VEM:operator}, respectively,  
	are computable using only the information of the DoFs $({\bf D1})-({\bf D3})$.
\end{assumption}
We recall that the space $H_k^{2,\nc}(\O_h)$  is the $H^2$-nonconforming space defined \eqref{H2:nonconf}  (and  used in Assumption ${\bf A2}$).
Next, we  present the following extension of Definition~\ref{def:local:dofs} to get  the global DoFs.
\begin{definition}[Associated global DoFs]\label{def:global:dofs}
	For a function $v_h \in H_k^{2,\nc}(\O_h)$, the global DoFs $\Lambda_{\boldsymbol{\xi}}$ associated to the DoFs-tuple $\boldsymbol{\xi}$ defined in \eqref{dof:tuple} are given by the following set of linear operators:
	\begin{itemize}
		\item[${\bf (D1_G)}$] The values of $ v_h$ at each vertex $\vb \in \VVi$.
		\item[${\bf (D2_G)}$] The moments of $\partial_{\bn_e}^iv_h$ up to order $d^e_i$ on each $e \in \EEi $, for $i =0,1,$
		\begin{align*}
			h_{e}^{-1+i}  (\partial_{\bn_e}^{i} v_h,p)_{e} \quad \forall p \in \M_{d^e_i}(e).
		\end{align*} 
		\item[${\bf (D3_G)}$] The  moments of $v_h$ up to order $\innerDofs$ inside every $\E \in \O_h$,
		\begin{align*}
			h^{-2}_{\E} ( v_h, p)_{\E} \quad \forall p \in \M_{\innerDofs}(\E).
		\end{align*}
		\end{itemize}		
\end{definition}
\begin{remark}\label{remark:Dofs:BCs}
We observe that the global DoFs defined above are unisolvent in the global space $\Vh$. This fact follows from the unisolvency of the local DoFs (cf. Assumption~\ref{assump:unisolvency}) and the definition of the local spaces.
Moreover, depending on the imposed boundary conditions (cf. \eqref{Campled:BCs}, \eqref{supported:BCs}, and \eqref{Cahn-Hilliard:BCs}), the global boundary-associated DoFs, as expected, must be removed to enforce the corresponding constraints. For simplicity, we retain the notation for the global VE space $\Vh$ (cf. \eqref{global:VEM}), although, strictly speaking, the discrete space used in the computations may differ slightly due to the application of the boundary conditions.
\end{remark}

On the other hand, for the sake of clarity, we present in this section explicit representations of the local multilinear forms involved in the discretization, under Assumption~\ref{assump:projections}.

\begin{definition}\label{disc:multiforms}
	For every $ v_h, w_h\in \VK$,  define the local discrete multilinear forms $M^{\E}_{h}(\cdot,\cdot), A^{\E}_{h}(\cdot,\cdot)$, $B^{\E}_{h}(\cdot,\cdot)$ and $F^{\E}_{h}(\cdot;\cdot)$ as 
	\begin{align}
		M^{\E}_{h}(v_h,w_h) &:=  (\PioKk v_h,  \PioKk w_h)_{0,\E}   + S_{0}^{\E}\big(({\rm I} -\PioKk)v_h,({\rm I} -\PioKk)w_h\big),\label{local:formM}\\
		A^{\E}_{h}(v_h,w_h) &:=  A^{\E}(\PiK v_h,\PiK w_h) 
		+ S_{\D^2}^{\E}\big(({\rm I} -\PiK)v_h,({\rm I} -\PiK)w_h\big),\label{local:formA}\\
		B^{\E}_{h}(v_h,w_h) &:=  (\PimunoK\nabla v_h,\PimunoK\nabla w_h)_{0,\E}   
		 + S_{\nabla}^{\E}\big(({\rm I} -\PinablaK)v_h,({\rm I} -\PinablaK)w_h\big),\label{local:formB}\\
		F^{\E}_{h}(v_h;w_h) &:=  (\PioKk f(\PioKk v_h),  w_h)_{0,\E}. \label{local:formF}
	\end{align} 	
	
	We will take the \emph{stabilization terms} $S_{0}^{\E}(\cdot,\cdot)$, $S_{\nabla}^{\E}(\cdot,\cdot)$ and  $S_{\D^2}^{\E}(\cdot,\cdot)$  to be a symmetric, positive definite bilinear forms satisfying
\begin{equation}\label{eqn:formsStabi:boundedM}
	\begin{alignedat}{2}
		M^{\E}(v_h,v_h) &\approx S_{0}^{\E}(v_h,v_h)
		&\quad& \forall v_h \in \ker(\PioKk), \\
		A^{\E}(v_h,v_h) &\approx S_{\D^2}^{\E}(v_h,v_h)
		&& \forall v_h \in \ker(\PiK),\\
		B^{\E}(v_h,v_h) &\approx S_{\nabla}^{\E}(v_h,v_h)
		&& \forall v_h \in \ker(\PinablaK),
	\end{alignedat}
\end{equation}
	where all the involved constant are positive and independent of $h_{\E}$.
\end{definition}

In this work, we adopt the standard choices for the stabilization bilinear forms $S_{0}^{\E}(\cdot,\cdot)$, $S_{\nabla}^{\E}(\cdot,\cdot)$, and $S_{\D^2}^{\E}(\cdot,\cdot)$, which satisfy the stability condition~\eqref{eqn:formsStabi:boundedM} and are based on suitably scaled scalar products of the local DoFs. More precisely, for the sake of completeness, we consider the following definitions.
\[
S^{\E}(v_h,w_h):= \sum_{i=1}^{d_k} \dof_i(v_h) \dof_i(w_h) 
\quad \qquad \forall v_h,w_h \in \VK.
\]

Thus, for all $v_h,w_h \in \VK$, we choose the following computable representations (see for instance~\cite{Beirao2013_M3AS,Antonietti2018_M3AS}):
\begin{align*}
	S_{0}^{\E}(v_h,w_h)&:= h_{K}^2 	S^{\E}(v_h,w_h), \quad
	S_{\nabla}^{\E}(v_h,w_h):=S^{\E}(v_h,w_h)
	\quad \text{and} \quad 	S_{\D^2}^{\E}(v_h,w_h):= h^{-2}_{K} S^{\E}(v_h,w_h).
\end{align*} 

\begin{remark}
We note that the projection operators $\PioKk$,  $\PimunoK\nabla$, $\PinablaK$ and $\PiK$ may alternatively be defined using the constrained least-squares approach introduced in~\cite{Dedner2022_IMA}. For the sake of clarity and ease of exposition, however, we restrict our attention to the standard construction presented above.
\end{remark} 

For the continuous bilinear form $M(\cdot,\cdot)$,  we adopt the following notation for all $v_h, w_h \in V+\Vh$:
\begin{equation}\label{notation:broken}
	M(v_h, w_h) := \sum_{\E \in \O_h} M^{\E}(v_h, w_h).
\end{equation}
Moreover, we adopt the same notation also for all other continuous forms.

We finish this subsection summarizing important properties of the discrete bilinear forms.	The proof follow directly from the definition of the forms and standard VEM arguments. Interested reader can look into \cite{ALKM2016,Antonietti2018_M3AS,CMS2016} for better understanding.
\begin{proposition}\label{prop:miltilinear}
	The global bilinear forms $M_{h}(\cdot,\cdot)$, $A_{h}(\cdot,\cdot)$, $B_h(\cdot,\cdot)$ constructed by employing \eqref{general:discrete:Mh}-\eqref{general:discrete:Bh} in Assumption ${\bf (A3)}$ and Definition~\ref{disc:multiforms} satisfy the following properties:
	\begin{align*}
		|M_h(v_h,w_h)|  & \leq C_{M}^* \|v_h\|_{0,\O} \|w_h\|_{0,\O}\qquad \text{and} \qquad	
		M^h(v_h,v_h) \geq C_{M,*} \|v_h\|_{0,\O}^2,\\
		|A_h(v_h,w_h)|  & \leq C_{A}^* |v_h|_{2,h} |w_h|_{2,h}\qquad \	\quad \text{and} \qquad	A_h(v_h,v_h)\geq C_{A,*}  |v_h|_{2,h}^2,\\
		|B_h(v_h,w_h)|  & \leq C_{B}^* |v_h|_{1,h} |w_h|_{1,h}\qquad \quad \  \text{and} \qquad	B_h(v_h,v_h) \geq C_{B,*} |v_h|_{1,h}^2.
	\end{align*}
	All the involved constants are positive and independent of the mesh discretization parameter $h$.
\end{proposition}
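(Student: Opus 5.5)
The plan is to reduce everything to local estimates on each $\E\in\O_h$ and then sum over the mesh with Cauchy--Schwarz. For each of the three forms, the local discrete form splits into a polynomial consistency part plus a stabilization part. Coercivity and continuity follow once the sum of these two parts is shown to be equivalent to the corresponding continuous local quantity. I treat $A_h$ in detail; $M_h$ and $B_h$ are identical in structure.

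\emph{Continuity of $A_h$.} For $v_h,w_h\in\VK$ I apply Cauchy--Schwarz to both pieces of \eqref{local:formA}, using that $S_{\D^2}^{\E}$ is symmetric positive semidefinite. The consistency part is controlled by $|\PiK v_h|_{2,\E}\le |v_h|_{2,\E}$, which holds because $\PiK$ is the $A^{\E}$-orthogonal projection (cf. \eqref{H2:VEM:operator}). For the stabilization part, $({\rm I}-\PiK)v_h\in\ker(\PiK)$ by idempotence, so \eqref{eqn:formsStabi:boundedM} gives
\[
S_{\D^2}^{\E}\big(({\rm I}-\PiK)v_h,({\rm I}-\PiK)v_h\big)\lesssim |({\rm I}-\PiK)v_h|_{2,\E}^2\le |v_h|_{2,\E}^2 .
\]
Hence $|A_h^{\E}(v_h,w_h)|\lesssim |v_h|_{2,\E}|w_h|_{2,\E}$. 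Summing over $\E$ and applying a discrete Cauchy--Schwarz inequality yields the global bound.

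\emph{Coercivity of $A_h$.} By $A^{\E}$-orthogonality I have the Pythagorean identity
\[
|v_h|_{2,\E}^2=|\PiK v_h|_{2,\E}^2+|({\rm I}-\PiK)v_h|_{2,\E}^2 .
\]
The lower bound in \eqref{eqn:formsStabi:boundedM}, applied to $({\rm I}-\PiK)v_h$, then gives $A_h^{\E}(v_h,v_h)\ge \min\{1,c_*\}|v_h|_{2,\E}^2$. I sum over elements to obtain the global estimate. The constants depend only on the stabilization constants, hence are independent of $h$.

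\emph{The forms $M_h$ and $B_h$.} The same argument applies to $M_h$, with $\PioKk$ the $L^2$-orthogonal projection, so $\|\PioKk v\|_{0,\E}\le\|v\|_{0,\E}$ together with the Pythagorean identity. For $B_h$ there is one wrinkle: the consistency term involves $\PimunoK\nabla v_h$, while the stabilization involves $({\rm I}-\PinablaK)v_h$. For the upper bound I use $\|\PimunoK\nabla v_h\|_{0,\E}\le|v_h|_{1,\E}$ and $|({\rm I}-\PinablaK)v_h|_{1,\E}\le|v_h|_{1,\E}$. For the lower bound I use
\[
|v_h|_{1,\E}\le |\PinablaK v_h|_{1,\E}+|({\rm I}-\PinablaK)v_h|_{1,\E}.
\]
Since $\nabla\PinablaK v_h\in\PP_{k-1}(\E)$ and $\PinablaK$ is the $B^{\E}$-orthogonal projection onto $\P_k$, the definition of $\PinablaK$ gives $\nabla \PinablaK v_h=\PimunoK\nabla v_h$, because both are the $L^2$ projection of $\nabla v_h$ onto $\nabla\P_k(\E)$ $=\PP_{k-1}$ restricted to gradients. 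I expect this to be the main obstacle, since $\nabla\P_k\subsetneq\PP_{k-1}$. I will therefore first try the simpler inequality $|\PinablaK v_h|_{1,\E}\le\|\PimunoK\nabla v_h\|_{0,\E}$. It holds because, testing with $p=\PinablaK v_h$,
\[
|\PinablaK v_h|_{1,\E}^2=(\nabla v_h,\nabla\PinablaK v_h)_{\E}=(\PimunoK\nabla v_h,\nabla\PinablaK v_h)_{\E}.
\]
Combining this with the stabilization lower bound on $({\rm I}-\PinablaK)v_h$ gives $B_h^{\E}(v_h,v_h)\gtrsim|v_h|_{1,\E}^2$.
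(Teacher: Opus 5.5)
Your argument is correct and is the standard VEM argument (local stability of the projectors via the Pythagorean identity, the stabilization equivalence applied to $({\rm I}-\Pi)v_h$ in the kernel, then elementwise summation with Cauchy--Schwarz), which the paper does not write out but invokes by citing the literature. The one thing to delete is the asserted identity $\nabla\PinablaK v_h=\PimunoK\nabla v_h$, which is false in general because $\PimunoK\nabla v_h$ need not lie in $\nabla\mathbb{P}_k(\E)$; the inequality $|\PinablaK v_h|_{1,\E}\le\|\PimunoK\nabla v_h\|_{0,\E}$ that you prove instead is correct and is all the lower bound for $B_h$ needs.
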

%
\subsection{The discrete formulations and their well-posedness}\label{SUBSECTION:DISCRETE:SCHEMES}
In this section we present the discrete virtual element formulations (the semi- and fully-discrete problems) and we provide their well-posedness in an abstract framework.

\subsubsection{The semi-discrete formulation}
The semi-discrete nonconforming-VE formulation for the time dependent semi-linear biharmonic problem~\eqref{conti:weak:form},  reads as: 	seek 
$$u_h \in L^2(0,T; \Vh),$$
such that for a.e. $t \in  (0,T)$
\begin{equation}\label{semi_discrete:scheme}
\left\{
\begin{aligned}	
M_h(\partial_t u_h(t),v_h)+ \alpha_1 A_h(u_h(t),v_h )+ \alpha_2 B_h(u_h(t), v_h)&= F_h(u_h;v_h), \quad \forall v_h \in \Vh,\\
u_h(0)&= u_{h,0},
\end{aligned}
\right.
\end{equation}	
where  $u_{h,0}$ is an adequate approximation of $u_0$ in the space $\Vh$. In fact, we set $u_{h,0}=u_I(0)$, where $u_I(0)$ is a suitable interpolation of $u_0$ in $\Vh$ (see below Assumption~\ref{virtual:interp:result}).  
 
Let us assume that \(\mathbf{A}, \mathbf{B}, \mathbf{M}\) be the matrix representation corresponding to the discrete forms \( A_h(\cdot, \cdot), B_h(\cdot, \cdot) \), and $M_h(\cdot,\cdot)$, respectively. Therefore, problem \eqref{semi_discrete:scheme} reduces to a system of nonlinear differential equations as follows
\begin{equation}
	\begin{split}
		&\mathbf{M} \frac{d\boldsymbol{U}_h}{dt} + (\alpha_1 \mathbf{A}+ \alpha_2 \mathbf{B}) \boldsymbol{U}_h  = \mathbf{F}(\boldsymbol{U}_h) \\
		&\boldsymbol{U}_h(0) = \boldsymbol{U}_0, 
	\end{split}
	\label{mat:semidiscrete}
\end{equation}
where \(\boldsymbol{U}_h\) denotes the vector whose entries are the components in the basis of \(u_h \in V_k^h\). Moreover, \(\mathbf{F}(\boldsymbol{U}_h)\) is the matrix corresponding to the nonlinear term and \(\mathbf{F}\) be the right hand side load vector corresponding to the basis \(u_h\). 

Now, we emphasize that the nonlinear term, i.e., \( \mathbf{F}(\boldsymbol{U}_h) \) satisfies Lipschitz's continuity condition and matrices \(\mathbf{A}, \mathbf{B}, \mathbf{M}\) are positive definite and hence inverse exist.  Therefore,
we can use the well-known Picard Theorem to have that \eqref{mat:semidiscrete} has a unique solution.

\subsubsection{The fully-discrete formulation}
In order to present the fully-discrete scheme of problem~\eqref{semi_discrete:scheme}, we introduce the following preliminaries notations. The interval $[0,T]$ is decomposed into subintervals $I_n :=[t_{n-1},t_n]$, where the sequence of time steps  $t_n = n \Delta t$, $n=0,1,2,\ldots,N$, and $\Delta t=T/N$ is the length of time step. The solution of semi-discrete and fully-discrete schemes at time $t = t_n$ will be denoted by $u_h(t)$ and $u_h^n$, respectively. 

For any generic function $g$, the approximation of the time derivative at $t_n$ 
is defined by
\begin{equation*}
	\delta_t g^n := \frac{g(t_n)-g(t_{n-1})}{\Delta t} \equiv  \frac{\: g^n-g^{n-1} \:}{\Delta t}.	
\end{equation*}

Moreover,  given a Hilbert space $V$ endowed with the norm $\|\cdot\|_{V}$, we consider the following discrete-in-time norms
\begin{equation}\label{def:norm:time}
\|g\|_{\ell^{2}(0,T;V)}
:=  \Big(\Delta t \sum_{n=1}^{N} \|g^{n}\|_{V}^{2} \Big)^{1/2},
\qquad
\|g\|_{\ell^{\infty}(0,T;V)}
:= \max_{0 \le n \le N} \|g^{n}\|_{V}.
\end{equation}

Now, with the above preliminaries we consider the backward Euler method coupled with the semi-discrete VE 
discretization presented \eqref{semi_discrete:scheme}, which read as: given  $u_h^0$, seek
$$\{u_h^n\}_{n=1}^{N} \subset \Vh,$$
such that
\begin{equation}\label{fully:dis:schm}
M_h(\delta_t u^n_h, v_h)+ \alpha_1 A_h(u_h^n,v_h)+\alpha_2 B_h(u_h^n, v_h)=F_h(u^n_h;v_h) \quad \forall v_h \in \Vh.
\end{equation}

For existence of discrete solutions to the fully-discrete scheme~\eqref{fully:dis:schm}, we  recall the following result, which is a consequence of the Brouwer fixed point theorem (see for instance~\cite{GR}).
\begin{lemma}\label{lemma:Brouwer:mod}
	Let $H$ be a finite dimensional Hilbert space endow with  inner product $\langle\cdot,\cdot\rangle_H$ and norm $\Vert \cdot \Vert_{H}$, and let $\mathcal{L}: H \to H$ be a continuous mapping. Furthermore, assume that there exists $\rho>0$,
	such that $\langle\mathcal{L}(\zeta),\zeta\rangle_H>0$, for all  $\zeta \in H $ with $\Vert \zeta\Vert_{H} = \rho >0$. Then, there exists $\zeta^{\ast} \in H$ such that $\mathcal{L}(\zeta^{\ast}) =0,$ with  $\Vert \zeta^{\ast}\Vert_{H} \leq \rho$.
\end{lemma}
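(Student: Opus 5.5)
The statement is the classical finite-dimensional consequence of Brouwer's fixed point theorem, sometimes called the ``acute angle'' lemma. I will argue by contradiction using a retraction-type map built from $\mathcal{L}$. By fixing an orthonormal basis, identify $H$ with $\R^m$, so that $\langle\cdot,\cdot\rangle_H$ becomes the Euclidean inner product. Set $\overline{B}_\rho:=\{\zeta\in H:\ \Vert\zeta\Vert_H\le\rho\}$, which is compact and convex and hence homeomorphic to a closed ball. Brouwer's theorem therefore applies to continuous self-maps of $\overline{B}_\rho$.

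Suppose, to reach a contradiction, that $\mathcal{L}(\zeta)\neq 0$ for every $\zeta\in\overline{B}_\rho$. Define
\[
\Phi(\zeta):=-\rho\,\frac{\mathcal{L}(\zeta)}{\Vert\mathcal{L}(\zeta)\Vert_H},\qquad \zeta\in\overline{B}_\rho .
\]
This map is continuous, because $\mathcal{L}$ is continuous and nonvanishing on $\overline{B}_\rho$, so the quotient is well defined. It maps $\overline{B}_\rho$ into itself, and in fact into the sphere $\Vert\zeta\Vert_H=\rho$. Brouwer's fixed point theorem then gives some $\zeta^\ast\in\overline{B}_\rho$ with $\Phi(\zeta^\ast)=\zeta^\ast$.

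Two facts follow from this fixed point. First, $\Vert\zeta^\ast\Vert_H=\Vert\Phi(\zeta^\ast)\Vert_H=\rho$, so $\zeta^\ast$ lies on the sphere where the positivity hypothesis holds. Second, taking the inner product of the fixed-point identity with $\zeta^\ast$ gives
\[
\rho^2=\langle\zeta^\ast,\zeta^\ast\rangle_H=-\frac{\rho}{\Vert\mathcal{L}(\zeta^\ast)\Vert_H}\,\langle\mathcal{L}(\zeta^\ast),\zeta^\ast\rangle_H .
\]
The hypothesis says $\langle\mathcal{L}(\zeta^\ast),\zeta^\ast\rangle_H>0$, so the right-hand side is negative, while $\rho^2>0$. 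This is a contradiction. Hence $\mathcal{L}$ vanishes at some point $\zeta^\ast\in\overline{B}_\rho$, that is, $\mathcal{L}(\zeta^\ast)=0$ with $\Vert\zeta^\ast\Vert_H\le\rho$.

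The proof has no genuine obstacle. The only point needing care is justifying that $\Phi$ is well defined and continuous, which is exactly what the contradiction hypothesis supplies, together with applying Brouwer on a compact convex set, which requires $\dim H<\infty$. In the later application, $\mathcal{L}$ will be defined through \eqref{fully:dis:schm} on $H=\Vh$. The positivity on a sphere will come from the coercivity in Proposition~\ref{prop:miltilinear} and the Lipschitz bound \eqref{ineq:Lipt:f}, but that belongs to the subsequent well-posedness argument, not to this lemma.
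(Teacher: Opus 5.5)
Your proof is correct. Assuming $\mathcal{L}$ has no zero in the closed ball, you apply Brouwer to $\zeta\mapsto-\rho\,\mathcal{L}(\zeta)/\Vert\mathcal{L}(\zeta)\Vert_H$ and test the fixed-point identity with $\zeta^\ast$ to reach a contradiction. The paper gives no proof of its own and cites \cite{GR}, where the argument is this standard one, so your approach matches the intended proof.
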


The next result provide the well-posedness of the fully-discrete formulation~\eqref{fully:dis:schm}. 
\begin{theorem}\label{theorem:well-posed:fully}
	Let be $\widehat{M} :=\max\{C_{M,*},(C^{*}_M)^2 C_{M,*}^{-1}\}$, where $C_{M,*}$ and $C_M^{*}$ are the constants in Lemma~\ref{prop:miltilinear}. Assume that
	\begin{equation}\label{asump:Deltat}
		\Delta t  < (2 L_{f})^{-1} \min\{ \widehat{M},C_{M,*}\},
	\end{equation}	
where $L_f$ is the Lipschitz constant in \eqref{ineq:Lipt:f}, and that $u^j_h$ for $j=0,1, \ldots, n-1$ are given. Then, under condition \eqref{asump:Deltat}, there exists a unique solution $u^n_h \in \Vh$  to the discrete problem \eqref{fully:dis:schm}. 
\end{theorem}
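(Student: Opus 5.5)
We prove existence with Lemma~\ref{lemma:Brouwer:mod} and uniqueness with a direct energy (contraction-type) estimate, both on the finite dimensional space $H:=\Vh$. For existence we use the inner product $\langle v_h,w_h\rangle_H := M_h(v_h,w_h)$. By Proposition~\ref{prop:miltilinear} it is symmetric and coercive, and its norm $\|v_h\|_H := M_h(v_h,v_h)^{1/2}$ is equivalent to $\|v_h\|_{0,\O}$. Multiplying \eqref{fully:dis:schm} by $\Delta t$, we define $\mathcal{L}:\Vh\to\Vh$ through the Riesz representation in $\langle\cdot,\cdot\rangle_H$:
\begin{equation*}
\langle \mathcal{L}(\zeta), v_h\rangle_H := M_h(\zeta - u_h^{n-1}, v_h) + \Delta t\big(\alpha_1 A_h(\zeta,v_h)+\alpha_2 B_h(\zeta,v_h)\big) - \Delta t\, F_h(\zeta;v_h) \qquad \forall v_h\in\Vh .
\end{equation*}
A zero of $\mathcal{L}$ is exactly a solution $u_h^n$ of \eqref{fully:dis:schm}. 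Continuity of $\mathcal{L}$ follows from the bilinearity of the forms and the continuity of $\zeta\mapsto F_h(\zeta;\cdot)$. The latter holds because $\PioKk$ is continuous in $L^2$ and $f$ is Lipschitz by \eqref{ineq:Lipt:f}, so $\|\PioKk f(\PioKk\zeta)-\PioKk f(\PioKk\eta)\|_{0,\E}\le L_f\|\zeta-\eta\|_{0,\E}$.

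For the coercivity condition of Lemma~\ref{lemma:Brouwer:mod}, test with $v_h=\zeta$ and drop the nonnegative terms $A_h(\zeta,\zeta)$ and $B_h(\zeta,\zeta)$:
\begin{equation*}
\langle \mathcal{L}(\zeta),\zeta\rangle_H \ge M_h(\zeta,\zeta) - |M_h(u_h^{n-1},\zeta)| - \Delta t\,|F_h(\zeta;\zeta)| .
\end{equation*}
The three terms are bounded as follows.
\begin{itemize}
\item $M_h(\zeta,\zeta)\ge C_{M,*}\|\zeta\|_{0,\O}^2$.
\item $|M_h(u_h^{n-1},\zeta)|\le C_M^*\|u_h^{n-1}\|_{0,\O}\|\zeta\|_{0,\O}$.
\item Splitting $f(\PioKk\zeta)=\big(f(\PioKk\zeta)-f(0)\big)+f(0)$ gives $|F_h(\zeta;\zeta)|\le L_f\|\zeta\|_{0,\O}^2+|f(0)||\O|^{1/2}\|\zeta\|_{0,\O}$.
\end{itemize}
Hence
\begin{equation*}
\langle \mathcal{L}(\zeta),\zeta\rangle_H \ge \|\zeta\|_{0,\O}\Big[(C_{M,*}-\Delta t L_f)\|\zeta\|_{0,\O} - C_M^*\|u_h^{n-1}\|_{0,\O}-\Delta t|f(0)||\O|^{1/2}\Big].
\end{equation*}
Condition \eqref{asump:Deltat} gives $C_{M,*}-\Delta tL_f\ge C_{M,*}/2>0$. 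If needed, Young's inequality on the $u_h^{n-1}$ term produces the constant $(C_M^*)^2C_{M,*}^{-1}$, which is where $\widehat M$ enters. Choosing $\rho$ large enough, with the $H$-norm related to $\|\cdot\|_{0,\O}$ through the equivalence constants, makes the bracket positive on $\|\zeta\|_H=\rho$. Lemma~\ref{lemma:Brouwer:mod} then yields a zero of $\mathcal{L}$, that is, a solution $u_h^n$.

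For uniqueness, let $u_h^n$ and $w_h^n$ be two solutions and set $\theta:=u_h^n-w_h^n$. Subtracting the two equations and testing with $\theta$ gives
\begin{equation*}
M_h(\theta,\theta)+\Delta t(\alpha_1A_h(\theta,\theta)+\alpha_2B_h(\theta,\theta)) = \Delta t\big(F_h(u_h^n;\theta)-F_h(w_h^n;\theta)\big)\le \Delta t L_f\|\theta\|_{0,\O}^2 .
\end{equation*}
It follows that $(C_{M,*}-\Delta tL_f)\|\theta\|_{0,\O}^2\le0$, so \eqref{asump:Deltat} forces $\theta=0$. The main obstacle is bookkeeping rather than conception: the Lipschitz bound for $F_h$ must be routed through the $L^2$-stability of $\PioKk$, and the constants must be tracked so that the smallness condition appears exactly as \eqref{asump:Deltat}. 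Any slack there only changes $\rho$, not the structure of the argument.
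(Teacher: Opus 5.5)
Your proposal is correct and follows the same route as the paper. Existence comes from Lemma~\ref{lemma:Brouwer:mod}, with the coercivity bound built from Proposition~\ref{prop:miltilinear}, the splitting $f(\PioKk\zeta)=(f(\PioKk\zeta)-f(0))+f(0)$ and the $L^2$-stability of $\PioKk$; uniqueness comes from testing the difference of two solutions with itself. Your uniqueness step is a bit cleaner than the paper's: using directly that both solutions share $u_h^{n-1}$ gives $(C_{M,*}-\Delta t L_f)\|\theta\|_{0,\O}^2\le 0$ without the detour through $\delta_t$ and $\widehat{M}$, and you also use the correct sign on $F_h$ in $\mathcal{L}$ and realize $\mathcal{L}$ as a map $\Vh\to\Vh$ via the Riesz representation, as the lemma requires.
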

\begin{proof}
We start by demonstrating existence of the solution. For this propose,  for any $u_h \in \Vh$,  we consider the operator  $\mathcal{L}: \Vh \to (\Vh)^{\ast}$, defined by 
\begin{equation*}
	\begin{split}
		\langle \mathcal{L}(u_h), v_h  \rangle 
		&:= 	M_h(u_h,v_h)+ \Delta t \big( \alpha_1 A_h(u_h,v_h)+  \alpha_2 B_h(u_h,v_h) 
		+F_h(u_h;v_h)\big)- M_h(u^{n-1}_h,v_h) \quad \forall v_h \in \Vh. 	
	\end{split}
\end{equation*}
For $h$ and $\Delta t$ fixed, we have that the operator $\mathcal{L}$ is continuous.

On the other hand, by employing again Lemma~\ref{prop:miltilinear} and Young's inequality, for all $u_h \in \Vh$, we obtain
\begin{equation}\label{eq1:operator}
	\begin{split}
		\langle \mathcal{L}(u_h), u_h  \rangle 
		&\geq  C_{M,*}\|u_h\|^2_{0,\O}- C_M^*\|u^{n-1}_h\|_{0,\O} \|u_h\|_{0,\O}+  \Delta t \big( \alpha_1 C_{A,*}|u_h|^2_{2,h}+  \alpha_2 C_{B,*}|u_h|^2_{1,h} + F_h(u_h;v_h)\big)\\
		& \geq C_{M,*}\|u_h\|^2_{0,\O}- \frac{C^{*2}_M}{2 C_{M,*}}\|u^{n-1}_h\|^2_{0,\O}- \frac{ C_{M,*} }{2} \|u_h\|^2_{0,\O}
		+ \Delta t  F_h(u_h;v_h)\\
		& \geq \frac{C_{M,*}}{2}\|u_h\|^2_{0,\O}- \frac{C^{*2}_M}{2 C_{M,*}}\|u^{n-1}_h\|^2_{0,\O}  + \Delta t  F_h(u_h;v_h).
	\end{split}
\end{equation}
Now, by using the Lipschitz continuity condition~\eqref{ineq:Lipt:f}, and the continuity of the projector $\PioKk$, we have 
\begin{equation}\label{eq2:operator}
	\begin{split}
		|F_h(u_h;v_h)|	&=|(f(\Pi^{k} u_h)-f(0),\Pi^{k} u_h)_{0,\O}+ (f(0),\Pi^{k} u_h)_{0,\O}| \\
		&\leq \sum_{\E \in \O_h} L_{f} \|\PioKk u_h\|_{0,\E} \|\PioKk u_h\|_{0,\E} + \|f(0)\|_{0,\E}\|\PioKk u_h\|_{0,\E} \\
		&\leq L_{f} \|u_h\|^2_{0,\O}  + \|f(0)\|_{0,\O}\| u_h\|_{0,\O} \\
		&\leq  L_{f} \|u_h\|^2_{0,\O}  + \frac{ C_{N}}{2 \alpha_1 C_{A,*}}\|f(0)\|^2_{0,\O}+\frac{\alpha_1 C_{A,*}}{2}|| u_h||^2_{0,\O},
	\end{split}
\end{equation}
where we have applied again Young's inequality. Thus, by combining estimates \eqref{eq1:operator} and \eqref{eq2:operator}, we derive 
\begin{equation*}
	\begin{split}
		\langle \mathcal{L}(u_h), u_h  \rangle 
		& \geq  \Big(\frac{C_{M,*}}{2}- L_{f} \Delta t  \Big)\|u_h\|^2_{0,\O}- \frac{C^{*2}_M}{2 C_{M,*}}\|u^{n-1}_h\|^2_{0,\O}  -\frac{C_{N} \Delta t }{2 \alpha_1 C_{A,*}}\|f(0)\|^2_{0,\O}\\
		& \geq  \frac{1}{2}\left(C_{M,*}- 2 L_{f} \Delta t  \right)\|u_h\|^2_{0,\O}- \frac{C^{*2}_M}{2 C_{M,*}}\|u^{n-1}_h\|^2_{0,\O} -\frac{C_{N} \Delta t }{2 \alpha_1 C_{A,*}}\|f(0)\|^2_{0,\O},
	\end{split}
\end{equation*}
where we  have also used that  $\frac{\alpha_1 C_{A,*} \Delta t}{2}>0$.
Then, from~\eqref{asump:Deltat} we  consider the radius $\rho$, given by
\begin{equation*}
	\rho := \Big(C_{M,*}- 2 L_{f} \Delta t  \Big)^{-1/2}\Big(\frac{C^{*,2}_M}{ C_{M,*}}\|u^{n-1}_h\|^2_{0,\O} +\frac{ C_{N}C_{M,*}}{2L_f C_{A,*}}\|f(0)\|^2_{0,\O}  \Big)^{1/2}.
\end{equation*}

Therefore, we have the hypothesis of Lemma~\ref{lemma:Brouwer:mod} and then there exists $u^n_h \in  \Vh$ satisfying $\mathcal{L}(u^n_h) =0$, i.e., the fully-discrete formulation~\eqref{fully:dis:schm} admits at least one solution at every time step $t_n$.

For the uniqueness, let $u^n_{1h}$ and $u^n_{2h}$ be two distinct solutions of system~\eqref{fully:dis:schm}.
Setting $w^n_h  := u^n_{1h}-u^n_{2h}$, we have
\begin{equation*}
	M_h(\delta_t w^n_h, v^n_h)+ \alpha_1 A_h(w_h^n,v_h)+\alpha_2 B_h(w_h^n, v_h)=F_h(u^n_{1h};v_h)- F_h(u^n_{2h};v_h) \quad
	\forall v_h \in \Vh.	
\end{equation*}

Then, taking $w^n_h$ in the above identity as the test function we obtain  
\begin{equation}\label{to:comb1}
	M_h(\delta_t w^n_h, w^n_h)+ \alpha_1 A_h(w_h^n,w^n_h)+\alpha_2 B_h(w_h^n, w^n_h)=(f(\Pi^{k} u^n_{1h})-f(\Pi^{k} u^n_{2h}),\Pi^{k} w^n_h)_{0,\O}.	
\end{equation}

Now, by employing the property \eqref{ineq:Lipt:f} and the continuity of the $L^2$-projector $\PioKk$, we have 
\begin{equation}\label{to:comb2}
	\begin{split}
		|(f(\Pi^{k} u^n_{1h}-f(\Pi^{k} u^n_{2h}),\Pi^{k} w^n_h)_{0,\O}& \leq \sum_{E \in \O_h} L_f\|\PioKk u^n_{1h}-\PioKk u^n_{2h}\|_{0,\E} \|\PioKk w^n_h\|_{0,\E}   
		\leq L_f\|w^n_h\|^2_{0,\O}.   	
	\end{split}
\end{equation}

Moreover, from Lemma~\ref{prop:miltilinear} and the Young inequality, we have
\begin{equation}\label{to:comb3}
	\begin{split}
		M_h(\delta_t w^n_h, w^n_h)	& =  \frac{1}{\Delta t }\big( M_h( w^n_h, w^n_h) -M_h(w^{n-1}_h, w^n_h) \big) \geq \frac{1}{\Delta t }\big(C_{M,*} \| w^n_h\|^2_{0,\O}  - C_M^* \| w^n_h\|_{0,\O} \| w^{n-1}_h\|_{0,\O} \big)\\
		& \geq \frac{1}{2\Delta t }\max\{C_{M,*}, C^{*,2}_M C^{-1}_{M,*}\} \big( \| w^n_h\|^2_{0,\O} -\| w^{n-1}_h\|^2_{0,\O}\big)= \frac{\widehat{M}}{2 }	 \delta_t \| w^n_h\|^2_{0,\O}.
	\end{split}
\end{equation}
Thus, by combining estimates~\eqref{to:comb1}, \eqref{to:comb2}, \eqref{to:comb3} and Lemma~\eqref{prop:miltilinear} 
we obtain
\[
\frac{\widehat{M}}{2 \Delta t} \big( \| w^n_h\|^2_{0,\O} -\| w^{n-1}_h\|^2_{0,\O}\big) - L_f \| w^n_h\|^2_{0,\O} \leq 0,
\]
which implies 
$$\Big(1-  \frac{2L_f}{\widehat{M}} \Delta t\Big) \| w^n_h\|^2_{0,\O}  \leq \| w^{n-1}_h\|^2_{0,\O}.
$$

Assuming $w^{n-1}_h = 0$, the above inequality and assumption~\eqref{asump:Deltat}, implies $w^{n}_h = 0$. This complete the rest of the proof.

\end{proof}

We finish this section with the following remark.
\begin{remark}
	We recall that, in the VEM framework, the construction of a computable approximation $F_h(u_h;v_h)$ for the nonlinear term $F(u;v)$ involves the operator $\PioKk$  (cf. \eqref{local:formF}), distinguishing it from the traditional FEM approach. Nevertheless, in Theorem~\ref{theorem:well-posed:fully}, we successfully established the existence and uniqueness of the problem under the classical assumption of small values for the time step  $\Delta t $, which is also a standard consideration in the FEM context.	
\end{remark}

\setcounter{equation}{0}
\setcounter{equation}{0}	
\section{Main results: unified error analysis, companion and Ritz-type operators}\label{SECTION:ERROR:UNIFIED}

In this section we will develop a new unified error analysis for our NCVEMs in an abstract framework. More precisely, we shall present the construction of a novel Ritz-type projection, which is built from a Companion operator and allow us derive optimal error bounds for the broken $H^i$-norms (with $i=1,2$) under minimal regularity condition of the solution in space (cf. Remark~\ref{remark:Ritz:min:regu}).

For this new error analysis, we will first outline the following standard assumptions concerning the polygonal mesh \cite{Beirao2013_M3AS}.
\begin{assumption}[Mesh regularity]\label{mesh:regularity} There exists a  real number $\rho>0$ such that,
	for every $h$ and every $\E\in \O_h$:
	\item[${\bf (A6)}$] $\E\in\O_h$ is star-shaped with respect to every point of a  ball of radius $\rho h_\E$;
	\item[${\bf (A7)}$] the ratio between the shortest edge and the diameter $h_\E$ of $\E$ is larger than $\rho$.
\end{assumption}

\subsection{Some useful results}
In this subsection we shall present some results which will be useful for the forthcoming sections~\cite{Huang2021_JCAM}. 

Throughout this work, $C$ denotes a generic positive constant, independent of the mesh size $h$ and
the time-step size $\Delta t$ (cf.\ Subsection~4.2), and possibly depending on the domain $\Omega$, the
final time $T$, and the polynomial order $k,$ of the method. The notation $\lesssim$ is used to
denote inequalities up to such a constant. The same convention applies to quantities with subscripts,
superscripts, tildes, bars, or hats.

\begin{lemma}[Scaled inequalities]\label{trace-scaling}
	For all $\varepsilon >0$, there exist positive constants $C_1, C_2$ and $C_{\varepsilon}$, independent of $h_{\E}$, such that
	\begin{equation*}
		\begin{split}
			\|v\|_{0,\partial \E} &\leq C_1(\varepsilon h^{1/2}_{\E}|v|_{1,\E} +C_{\varepsilon} h^{-1/2}_{\E}\|v\|_{0,\E}) \qquad \: \: \forall v \in H^1(\E),\\
			|v|_{1,\E} &\leq C_2(\varepsilon h_{\E}|v|_{2,\E} +C_{\varepsilon} h^{-1}_{\E}\|v\|_{0,\E}) \qquad \quad \: \: \: \forall v \in H^2(\E).
		\end{split}
	\end{equation*}
\end{lemma}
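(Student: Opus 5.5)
The plan is to reduce both inequalities to fixed reference configurations by scaling, prove them there with constants independent of the shape of $\E$, and then undo the scaling. Both statements are $\varepsilon$-weighted, scale-invariant interpolation inequalities. By Assumption~\ref{mesh:regularity} (A6), each $\E$ is star-shaped with respect to a ball of radius $\rho h_\E$. Consider the map $\bx\mapsto \widehat{\bx}=(\bx-\bx_\E)/h_\E$. It sends $\E$ onto a domain $\widehat{\E}$ of unit diameter, star-shaped with respect to a ball of radius $\rho$. Such domains are uniformly Lipschitz, with Lipschitz character controlled by $\rho$ alone.

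Under this map the relevant quantities transform as
\[
\|v\|_{0,\partial\E}=h_\E^{1/2}\|\widehat v\|_{0,\partial\widehat\E},\qquad
|v|_{m,\E}=h_\E^{1-m}|\widehat v|_{m,\widehat\E}.
\]
A direct check shows that both claimed inequalities are invariant under this transformation. It therefore suffices to prove them on $\widehat\E$ with $h_\E=1$.

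On $\widehat\E$ the first inequality rests on the multiplicative trace inequality
\[
\|\widehat v\|^2_{0,\partial\widehat\E}\lesssim \|\widehat v\|_{0,\widehat\E}\,\|\widehat v\|_{1,\widehat\E}.
\]
One way to obtain it is to take a smooth vector field $\boldsymbol{\mu}$ with $\boldsymbol{\mu}\cdot\bn\ge c>0$ on $\partial\widehat\E$. For a domain star-shaped with respect to a ball, $\boldsymbol{\mu}=\widehat\bx-\widehat\bx_0$ works, with $c\gtrsim\rho$ and $\|\boldsymbol{\mu}\|_{W^{1,\infty}}\lesssim 1$. The divergence theorem applied to $\widehat v^2\boldsymbol{\mu}$ then gives $c\|\widehat v\|^2_{0,\partial}\le \int(2\widehat v\nabla\widehat v\cdot\boldsymbol{\mu}+\widehat v^2\div\boldsymbol{\mu})$. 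Next, Young's inequality $ab\le \varepsilon^2 b^2+\tfrac{1}{4\varepsilon^2}a^2$ yields
\[
\|\widehat v\|_{0,\partial}\lesssim \varepsilon|\widehat v|_1+C_\varepsilon\|\widehat v\|_0,\qquad C_\varepsilon\approx 1+\varepsilon^{-1},
\]
which is the first claim after scaling back.

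The second inequality is the Ehrling/Gagliardo–Nirenberg interpolation $|\widehat v|_1\lesssim |\widehat v|_2^{1/2}\|\widehat v\|_0^{1/2}+\|\widehat v\|_0$ on Lipschitz domains. The constant must be uniform over the class of $\widehat\E$, so I would not invoke compactness abstractly. The approach is as follows.
\begin{itemize}
\item Extend $\widehat v$ with a Stein extension operator whose norm depends only on the Lipschitz character, hence only on $\rho$.
\item Use the Fourier estimate $\int|\xi|^2|\mathcal{F}w|^2\le(\int|\xi|^4|\mathcal{F}w|^2)^{1/2}(\int|\mathcal{F}w|^2)^{1/2}$ on $\R^2$.
\end{itemize}
This gives $|\widehat v|_1\lesssim \|\widehat v\|_2^{1/2}\|\widehat v\|_0^{1/2}$. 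Writing $\|\widehat v\|_2\le|\widehat v|_2+|\widehat v|_1+\|\widehat v\|_0$, the $|\widehat v|_1$ term is absorbed via Young's inequality, leaving $|\widehat v|_1\le \varepsilon|\widehat v|_2+C_\varepsilon\|\widehat v\|_0$. A simpler alternative is to use the Bramble–Hilbert/Poincaré inequality with constant depending only on $\rho$, applied to $\widehat v-q$ with $q$ the averaged Taylor polynomial of degree $1$, combined with an inverse-type bound on $\P_1$.

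The main obstacle is keeping every constant dependent only on $\rho$ (and $\varepsilon$), not on the individual polygon. This is why the star-shapedness in (A6) is used twice: once for the explicit vector field $\boldsymbol{\mu}$ in the trace inequality, and once for the uniform extension or averaged Taylor polynomial estimates in the second inequality.
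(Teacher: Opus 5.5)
The paper gives no proof of this lemma. It is listed among standard tools with a pointer to \cite{Huang2021_JCAM}, and it is only ever used with a fixed $\varepsilon$, in the edge terms of the proof of Lemma~\ref{lemma:extra:power}. Your proposal therefore supplies an argument the paper omits, and your main line is correct.
\begin{itemize}
\item Both inequalities are invariant under $\bx\mapsto(\bx-\bx_{\E})/h_{\E}$.
\item On $\widehat{\E}$, the field $\boldsymbol{\mu}=\widehat{\bx}-\widehat{\bx}_0$ satisfies $\boldsymbol{\mu}\cdot\bn\ge\rho$, $|\boldsymbol{\mu}|\le 1$ and $\div\boldsymbol{\mu}=2$. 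The divergence identity plus Young then gives the trace bound with $C_1\sim\rho^{-1/2}$ and $C_\varepsilon\sim 1+\varepsilon^{-1}$.
\item Stein's extension is a single operator bounded simultaneously on $L^2$ and $H^2$, with norm depending only on the Lipschitz character, hence only on $\rho$. Combined with the Fourier estimate and the absorption of $|\widehat v|_{1}$, it yields $|\widehat v|_{1}\le\varepsilon|\widehat v|_{2}+C_\varepsilon\|\widehat v\|_{0}$ up to a $\rho$-dependent factor.
\end{itemize}
Compared with the bare citation, your route shows that only (A6) is needed. It also makes explicit that $C_1,C_2$ do not depend on $\varepsilon$, which the quantifier order in the statement leaves ambiguous, and it exhibits how $C_\varepsilon$ grows. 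The price is the uniformity of Stein's constants over the class of chunky polygons, which you assert rather than prove. This is standard, e.g.\ via the radial Lipschitz-graph description of $\partial\widehat{\E}$.

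One correction: your \emph{simpler alternative} for the second inequality does not deliver the statement. Take $q\in\P_1$ the averaged Taylor polynomial, apply Bramble--Hilbert to $\widehat v-q$, and use an inverse estimate on $\P_1$. This yields only $|\widehat v|_{1}\lesssim|\widehat v|_{2}+\|\widehat v\|_{0}$, which is the case of a single fixed $\varepsilon$. Nothing in that argument produces an arbitrarily small factor in front of $|\widehat v|_{2}$ while keeping $C_2$ independent of $\varepsilon$. To rescue it you would need an extra step, for instance:
\begin{itemize}
\item a localization: apply the scaled fixed-$\varepsilon$ bound on a bounded-overlap cover of $\widehat{\E}$ by uniformly chunky patches of diameter $\sim\varepsilon$, then sum; or
\item a compactness or interpolation argument.
\end{itemize}
Otherwise, keep the extension-plus-Fourier route. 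The fixed-$\varepsilon$ version would suffice for how the paper actually uses the lemma, but not for the lemma as formulated.
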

\begin{lemma}[Integral average]\label{lemma:average:edge}
	The projection $\widetilde{\Pi}^0_{e}: H^1(\E) \to \P_{0}(e)$ defined by the following average
	$\widetilde{\Pi}^0_{e} v := h^{-1}_e  (v,1)_{e}$, satisfies 
	\begin{equation*}
		\|v - \widetilde{\Pi}^0_{e} v\|^2_{0,e} \lesssim h_{\E}|v|^2_{1,\E} \qquad  \forall v \in H^1(\E).
	\end{equation*}
\end{lemma}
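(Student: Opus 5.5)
The estimate in Lemma~\ref{lemma:average:edge} follows from the scaled trace inequality of Lemma~\ref{trace-scaling} combined with a Poincar\'e-type inequality on $\E$. The key observation is that $\widetilde{\Pi}^0_{e} v$ is the $L^2(e)$-best approximation of $v$ by constants on $e$.

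First, for any constant $c \in \R$,
\[
\|v-\widetilde{\Pi}^0_{e} v\|_{0,e} \le \|v-c\|_{0,e}.
\]
We choose $c := \frac{1}{|\E|}(v,1)_{\E}$, the mean value of $v$ over the element. Since $v-c \in H^1(\E)$, the first inequality of Lemma~\ref{trace-scaling} (with, say, $\varepsilon=1$) applied on $\partial\E \supset e$ gives
\[
\|v-c\|_{0,e}^2 \le \|v-c\|_{0,\partial\E}^2 \lesssim h_\E |v|_{1,\E}^2 + h_\E^{-1}\|v-c\|_{0,\E}^2,
\]
where we used $\nabla(v-c)=\nabla v$.

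Next, we invoke the Poincar\'e--Wirtinger inequality on $\E$, namely $\|v-c\|_{0,\E}\lesssim h_\E |v|_{1,\E}$. Substituting this bound yields $\|v-\widetilde{\Pi}^0_{e} v\|_{0,e}^2 \lesssim h_\E |v|_{1,\E}^2$, which is the claim.

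The only delicate point is that the constant in the Poincar\'e inequality must be independent of $\E$ and $h_\E$. Assumption~\ref{mesh:regularity}, specifically (A6) (star-shapedness with respect to a ball of radius $\rho h_\E$), guarantees this: by the classical Bramble--Hilbert/Dupont--Scott argument, or by scaling to a reference diameter-one domain and using uniform star-shapedness, the constant depends only on $\rho$. Lemma~\ref{trace-scaling} is likewise used under this same assumption, so its constants are uniform. Finally, (A7) allows us to exchange $h_e$ and $h_\E$ if desired, though this is not needed for the stated form.
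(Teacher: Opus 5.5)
Your proof is correct. The edge average is the $L^2(e)$-best constant approximation, so you can compare with the element mean. The scaled trace inequality of Lemma~\ref{trace-scaling} plus a Poincar\'e--Wirtinger inequality, with constant depending only on $\rho$ under (A6), then gives the bound. The paper gives no proof of this lemma and simply cites Huang--Yu \cite{Huang2021_JCAM}, so I cannot compare routes; your trace-plus-Poincar\'e argument is the standard way such a bound is obtained.
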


We consider the following polynomial approximation result.

\begin{lemma}[Polynomial approximation]\label{polynomial:interp:result}
	Under assumption ${\bf (A6)}$ for every $v \in H^{2+s}(\E)$, with $0 \leq s \leq k-1$, 
	there exists $v_{\pi} \in \mathbb{P}_k(\E)$, independent of $h$, such that
	\begin{equation*}
		|v-v_{\pi}|_{\ell,\E} \lesssim h^{2+s-\ell} |v|_{2+s,\E}, \quad \ell=0,1,2. 
	\end{equation*}
\end{lemma}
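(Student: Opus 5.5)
The natural choice for $v_\pi$ is the averaged Taylor polynomial of degree $k$ over the ball $B\subset\E$ of radius $\rho h_\E$ given by assumption ${\bf (A6)}$, as in Brenner--Scott. I would then prove the estimate by a Bramble--Hilbert argument on a reference configuration, followed by scaling. Concretely, set $v_\pi := Q^k v$, the Sobolev averaged Taylor polynomial
\[
Q^k v(\bx)=\int_B \sum_{|\boldsymbol{\beta}|\le k}\frac{1}{\boldsymbol{\beta}!}\,\partial^{\boldsymbol{\beta}} v(\boldsymbol{y})\,(\bx-\boldsymbol{y})^{\boldsymbol{\beta}}\,\phi(\boldsymbol{y})\,d\boldsymbol{y},
\]
with $\phi$ a smooth cutoff supported in $B$ of unit mass. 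This polynomial depends only on $v$ and $B$. Since $\E$ is star-shaped with respect to $B$ and the chunkiness parameter $h_\E/(\rho h_\E)=\rho^{-1}$ is uniform, the Bramble--Hilbert lemma gives, for any integer $m\le k+1$ and any $\ell\le m$,
\[
|v-Q^kv|_{\ell,\E}\lesssim h_\E^{m-\ell}|v|_{m,\E},
\]
with a constant depending only on $\rho$ and $k$.

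The main work is to reach fractional orders $2+s$ with $s\in(0,k-1]$ not an integer. I would do this in three steps.

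First, treat integer $m=2+s\in\{2,\dots,k+1\}$ directly from the bound above.

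Second, for noninteger $s$, write $m_0=\lfloor 2+s\rfloor$ and $m_1=m_0+1\le k+1$. The operator $v\mapsto v-Q^kv$ is linear and maps $H^{m_0}(\E)\to H^\ell(\E)$ and $H^{m_1}(\E)\to H^\ell(\E)$. To obtain seminorm bounds rather than norm bounds, I would first work on the rescaled element $\widehat{\E}$ of unit diameter, where uniform star-shapedness makes all constants $\rho$-dependent only. There I would use the reduced-norm formulation: by polynomial reproduction, $v-Q^kv=(I-Q^k)(v-p)$ for any $p\in\P_{m_0-1}$. This lets the bound be stated in terms of the Sobolev--Slobodeckij seminorm $|v|_{2+s,\widehat{\E}}$. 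The cleanest route is a fractional Bramble--Hilbert estimate, i.e. the Dupont--Scott result that
\[
\inf_{p\in\P_{m_0}}\|v-p\|_{m_0,\widehat{\E}}\lesssim |v|_{2+s,\widehat{\E}}
\]
on domains star-shaped with respect to a ball. Combined with the $H^{m_0}$-stability of $Q^k$, this gives $|v-Q^kv|_{\ell,\widehat{\E}}\lesssim |v|_{2+s,\widehat{\E}}$.

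Third, scale back from $\widehat{\E}$ to $\E$. The seminorms scale as $|v|_{\ell,\E}=h_\E^{1-\ell}|\hat v|_{\ell,\widehat{\E}}$, and the fractional seminorm scales as $h_\E^{1-(2+s)}$, which yields the factor $h_\E^{2+s-\ell}\le h^{2+s-\ell}$.

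I expect the main obstacle to be the fractional-order step: justifying a Bramble--Hilbert/Dupont--Scott bound in the Slobodeckij seminorm with a constant uniform over all admissible polygons. I would handle this by appealing to the Dupont--Scott theory for domains star-shaped with respect to a ball, whose constants depend only on the chunkiness parameter, and hence only on $\rho$ through ${\bf (A6)}$. The case $s=0$ with $\ell=2$ is just stability and needs no approximation.
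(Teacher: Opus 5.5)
Your argument is correct and follows the standard route: averaged Taylor polynomial, Bramble--Hilbert and Dupont--Scott (fractional orders included) on elements star-shaped with respect to a ball of radius $\rho h_\E$, then scaling. The paper relies on exactly this argument tacitly, since it states the lemma without proof as a known result. One small fix: in the fractional step the polynomial you subtract must range over $\P_{m_0}$, not $\P_{m_0-1}$, because $\P_{m_0}$ is the kernel of the order-$(2+s)$ Slobodeckij seminorm; this costs nothing, since $m_0\le k$ and $Q^k$ reproduces all of $\P_k$.
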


Next, we assume the following error interpolation result.
\begin{assumption}[Existence of an virtual interpolation operator]\label{virtual:interp:result}
	Under assumptions ${\bf (A6)-(A7)}$ for every $v \in H^{2+s}(\E)$, with $0 \leq s \leq k-1$, there exists $v_I \in \VK$, independent of $h$, such that 
	\begin{equation*}
		\begin{split}
			&dof_i (v-v_I)=0 \qquad  \text{for all $ 1 \leq i \leq d_k$, and }\\
			&| v -v_I|_{\ell,\E} \lesssim h^{2+s-\ell}|v|_{2+s,\E}, \quad \ell=0,1,2.	
		\end{split}	
	\end{equation*}
\end{assumption}

We have the following result, which will be useful to obtain an optimal estimate in the broken $H^1$-norm
for the  Ritz-type projection.

\begin{lemma}\label{lemma:extra:power}
	Let $ \chi \in H^{2+s}(\O) \cap V$, with $0 < s \leq k-1$. For each $v \in H^{2+s_0}(\O)$ with $s_0 \in [0,1]$, it holds
	\begin{equation*}
		A(v,\chi-\chi_I) \lesssim  h^{s+s_0}\|v\|_{2+s_0,\O} \|\chi\|_{2+s,\O}.
	\end{equation*}
	where $\chi_I \in \Vh$ is the interpolant of $\chi$ in the virtual space $\Vh$ (cf. Assumption~\ref{virtual:interp:result}).	
\end{lemma}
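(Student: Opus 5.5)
The plan is to exploit a local orthogonality of the interpolation error against polynomials in the $H^2$-seminorm inner product. This lets us subtract an elementwise polynomial approximation of $v$ and gain the extra factor $h^{s_0}$.

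\emph{Step 1 (local orthogonality).} Fix $\E\in\O_h$. By Assumption~\ref{virtual:interp:result}, $\dof_i(\chi-\chi_I)=0$ for all $1\le i\le d_k$. By Assumption~\ref{assump:projections}, $\PiK$ applied to functions on $\E$ is determined by the DoFs $({\bf D1})$--$({\bf D3})$. Hence $\PiK(\chi-\chi_I)=0$. A caveat: Assumption~\ref{assump:projections} is formulated for $v_h\in\VK$, whereas $\chi|_\E\in H^{2+s}(\E)$ is not a virtual function. I would therefore verify directly that the computation of $\PiK$ uses only the DoFs. Integrating $(\D^2 p,\D^2 w)_\E$ by parts for $p\in\P_k(\E)$ produces the following terms:
\begin{itemize}
\item interior moments of $w$ against $\Delta^2 p\in\P_{k-4}(\E)$;
\item edge moments of $\partial_{\bn}w$ against $\partial_{\bn\bn}p\in\P_{k-2}(e)$;
\item edge moments of $w$ of degree $\le k-3$;
\item vertex values of $w$.
\end{itemize}
All of these are DoFs for the tuples considered, in particular when $d_1^e\ge k-2$, as in Example~\ref{example:dofs}. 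The $H^2$-seminorm pairing therefore uses only DoFs. The side conditions $P_0(\cdot)$ and $P_0(\nabla\cdot)$ also involve only boundary moments of $w$ and $\partial_{\bn}w$ of degree $0$, vertex values, and tangential derivatives; these are again expressible through DoFs. By the defining relation \eqref{H2:VEM:operator}, for every $p\in\P_k(\E)$,
\begin{equation*}
A^{\E}(p,\chi-\chi_I)=A^{\E}\big(p,\PiK(\chi-\chi_I)\big)=0 .
\end{equation*}

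\emph{Step 2 (subtract a polynomial).} Using the broken notation \eqref{notation:broken}, write $A(v,\chi-\chi_I)=\sum_{\E}A^{\E}(v,\chi-\chi_I)$. On each element take $v_\pi\in\P_k(\E)$ from Lemma~\ref{polynomial:interp:result} with regularity index $s_0\in[0,1]\subset[0,k-1]$. By Step 1,
\begin{equation*}
A(v,\chi-\chi_I)=\sum_{\E\in\O_h}(\D^2(v-v_\pi),\D^2(\chi-\chi_I))_{\E}
\le \sum_{\E\in\O_h}|v-v_\pi|_{2,\E}\,|\chi-\chi_I|_{2,\E}.
\end{equation*}

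\emph{Step 3 (estimates).} Lemma~\ref{polynomial:interp:result} with $\ell=2$ gives $|v-v_\pi|_{2,\E}\lesssim h^{s_0}|v|_{2+s_0,\E}$. Assumption~\ref{virtual:interp:result} with $\ell=2$ gives $|\chi-\chi_I|_{2,\E}\lesssim h^{s}|\chi|_{2+s,\E}$. A discrete Cauchy--Schwarz inequality then yields the bound $h^{s+s_0}\big(\sum_\E|v|^2_{2+s_0,\E}\big)^{1/2}\big(\sum_\E|\chi|^2_{2+s,\E}\big)^{1/2}$. For fractional indices, the sum of elementwise Sobolev--Slobodeckij seminorms is bounded by the global norm, since the double integrals over $\E\times\E$ are contained in the one over $\O\times\O$. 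This gives the claim.

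\emph{Main obstacle.} The delicate point is Step 1. The orthogonality requires the DoF set to control every boundary and interior term in the integration by parts of $(\D^2p,\D^2w)_\E$, and it must hold for $w=\chi-\chi_I$ with $\chi$ non-virtual. If the DoF tuple had $d_1^e=k-3$, the normal-derivative moments of degree $k-2$ would be missing. In that case I would instead use the precise projector computability from~\cite{Dedner2022_IMA}, namely that $\PiK$ is computed from DoFs for any $H^2(\E)$ function, to conclude $\PiK\chi_I=\PiK\chi$.
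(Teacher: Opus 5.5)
Your argument is correct, but it takes a different route from the paper.

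\textbf{The paper's route.} It works on the side of $v$. Continuity gives the rate $h^{s}$ for $v\in H^2(\O)$. For $v\in H^3(\O)$ it integrates by parts elementwise, moving derivatives onto $v$. It then uses only the weak continuity of $\chi-\chi_I$: mean-zero jumps of $\partial_{\bn_e}(\chi-\chi_I)$, and jumps of $\chi-\chi_I$ vanishing at the vertices, hence mean-zero tangential jumps. Together with trace and edge-average estimates this gives $h^{s+1}$. The range $s_0\in(0,1)$ then follows by interpolating the functional $v\mapsto A(v,\chi-\chi_I)$ between $H^2$ and $H^3$.

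\textbf{Your route.} You work on the side of $\chi-\chi_I$. For $p\in\P_k(\E)$, $A^{\E}(p,w)$ depends only on vertex values, edge moments of $w$ of degree $\le k-3$, edge moments of $\partial_{\bn}w$ of degree $\le k-2$, and interior moments of degree $\le k-4$. So vanishing DoFs give $A^{\E}(p,\chi-\chi_I)=0$, and best approximation of $v$ in $\P_k(\E)$ finishes the proof.

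\textbf{What each buys.} Your argument is purely local. It needs no $H^3$ step, no trace inequality and no Sobolev interpolation, and it actually yields $h^{s+s_0}$ for every $s_0\in[0,k-1]$. Its price is that the DoF set must control $A^{\E}(p,\cdot)$ for all of $\P_k(\E)$, notably $d_1^e\ge k-2$. This holds for both tuples in Example~\ref{example:dofs}. The paper's proof uses only $\P_0$ weak continuity and vertex interpolation, so it does not depend on the tuple; that is also why it stops at $s_0\le 1$.

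\textbf{Two small points.}
\begin{itemize}
\item Your fallback for $d_1^e=k-3$ would not work as stated. In that case $A^{\E}(p,w)$ is not determined by the DoFs of a general $w\in H^2(\E)$, and the paper's route is the one to use.
\item The detour through $\PiK(\chi-\chi_I)=0$ is unnecessary and can fail. For the Morley tuple with $k=2$ one has $d_0^e<0$, so $P_0(w)=(w,1)_{\partial\E}$ is not a DoF, and $\PiK(\chi-\chi_I)$ need not vanish. Your integration-by-parts identity already gives $A^{\E}(p,\chi-\chi_I)=0$ directly, and that is all Step 2 uses.
\end{itemize}
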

\begin{proof}
This result can be seen as a extension of \cite[Lemma 4.10]{Adak2024_CMAME} to high-order cases and different boundary conditions. For sake of completeness, we will provide the sketch of its proof. Indeed, let  $\chi \in H^{2+s}(\O) \cap V$, with $0 < s \leq k-1$.  For $v \in H^2(\O)$, we can apply the continuity of $A(\cdot,\cdot)$ to obtain
\begin{equation}\label{interpolation:uno}
	\begin{split}
		A(v,\chi-\chi_I) &= \sum_{\E \in\O_h} A^{\E}(v,\chi-\chi_I)
		\lesssim  h^s\|v\|_{2,\O}\|\chi\|_{2+s,\O}.
	\end{split}
\end{equation}
Now, for $v \in H^{3}(\O)$, we apply integration by parts to obtain 
\begin{equation}\label{itp1:dos}
	\begin{split}
		A(v,\chi-\chi_I) & 
		=- \sum_{\E \in \O_h}\int_{\E} \nabla(\Delta v) \cdot \nabla(\chi-\chi_I) 
		+\sum_{\E \in \O_h} \int_{\partial \E} \left(  \Delta v - \frac{\partial^2 v}{\partial \bt^2_{\E} } \right) \frac{\partial(\chi-\chi_I)}{\partial \bn_{\E}}\\		
		&\qquad+ \sum_{\E \in \O_h}\int_{\partial \E} \frac{\partial^2 v}{\partial \bn_{\E} \partial \bt_{\E}} \frac{\partial(\chi-\chi_I)}{\partial \bt_{\E} }=: T_1 + T_2+ T_3.
	\end{split}
\end{equation}
For the term $T_1$, we use the Cauchy-Schwarz inequality (for sequences) 
\begin{equation}\label{T1:dos}
	\begin{split}
		T_1 
		&					\lesssim \|v\|_{3,\O}|\chi-\chi_I|_{1,h} \lesssim h^{s+1}\|v\|_{3,\O} \|\chi\|_{2+s,\O}.
	\end{split}
\end{equation} 
Now, we will bound the terms $T_2$ and $T_3$. For convenience, we set  $\zeta_2=  \left(  \Delta v - \frac{\partial^2 v}{\partial \bt^2_{e} } \right) $ and $\zeta_3= \frac{\partial^2 v}{\partial \bn_{e} \partial \bt_{e}}$. 

By using the fact that  $\chi  \in C^1(\bar{\O}) \cap V$ and since $\chi_I \in \Vh \subset H^{2,\nc}(\O_h)$, we have:
\begin{equation}\label{int-normal}
	\int_e p_0\jump{\nabla(\chi-\chi_I)\cdot \bn_{e}} =0 \qquad \forall p_0 \in \P_{0}(e) \subset \P_{k-2}(e).
\end{equation} 
Then, for $T_2$,  from \eqref{int-normal}, with $p_0 =\widetilde{\Pi}_e^0 \zeta_2 \in \P_{0}(e)$ 
(cf. Lemma~\ref{lemma:average:edge}), we obtain
\begin{equation}\label{T2}
	\begin{split}
		T_2 
		& = \sum_{e \in \EE_h} \int_e (\zeta_2-\widetilde{\Pi}_e^0 \zeta_2) \jump{\nabla(\chi-\chi_I)\cdot \bn_{e}}	 
		\lesssim h^{s+1}\|v\|_{3,\O} \|\chi\|_{2+s,\O},		 
	\end{split}
\end{equation}
where we have used Lemmas \ref{trace-scaling} and~\ref{lemma:average:edge}, along with Assumption~\ref{virtual:interp:result}. 
On the other hand, for all $ p_0 \in \P_{0}(e)$,  we have the following identity	
\begin{equation*}
	\begin{split}
		\int_e p_0 \Jump{\frac{\partial(\chi-\chi_I)}{\partial \bt_{e} }} &= - \int_e  \frac{\partial p_0}{\partial \bt_e} \jump{\chi-\chi_I} +
		(\jump{\chi-\chi_I}p_0)(\vb_2)-(\jump{\chi-\chi_I}p_0)(\vb_1)=0,
	\end{split}
\end{equation*}
where we have used the fact that the jump $\jump{\chi-\chi_I}$ is zero when evaluated 
at the endpoints $\vb_1$ and $\vb_2$ of edge $e \in \EE_h$ (by construction of the interpolant $\chi_I$; see Assumption \ref{virtual:interp:result} and definition of space $\Vh$).

Thus, taking  $p_0 =\widetilde{\Pi}_e^0 \zeta_3 \in \P_{0}(e)$ in the above identity, we have that
\begin{equation*}
	\begin{split}
		T_3 
		&	= \sum_{e \in \EE_h} \int_e  \zeta_3 \Jump{ \frac{\partial(\chi- \chi_I)}{\partial \bt_{e} }}
		\leq \Big( \:\sum_{e \in \EE_h} |e|^{-1}\|\zeta_3-\widetilde{\Pi}_e^0 \zeta_3\|^2_{0,e} \Big)^{1/2}
		\Big( \:\sum_{e \in \EE_h} |e| \left\|\jump{ \nabla(\chi- \chi_I)\cdot \bt_{\E} } \right\|^2_{0,e} \Big)^{1/2}.
	\end{split}
\end{equation*}

Then, employing the same arguments used to obtain the estimation \eqref{T2}, we  get 
\begin{equation}\label{T3}
	T_3   \lesssim h^{s+1}\|v\|_{3,\O} \|\chi\|_{2+s,\O}.
\end{equation} 
By inserting \eqref{T1:dos}, \eqref{T2} and \eqref{T3} in \eqref{itp1:dos},  since 	$s_0 \in (0,1]$, we find
\begin{equation}\label{interpolation:dos}
	A(v,\chi-\chi_I) \lesssim h^{s+s_0}\|v\|_{3,\O} \|\chi\|_{2+s,\O}.
\end{equation}

Now, we observe  that by the real method of interpolation $H^{2+s_0}(\O)$ can be obtained as the intermediate space of $H^2(\O)$ and $H^3(\O)$ (see for instance~\cite{Adams2003_Sobolev,BSZZ2013}). Thus, by applying the above fact, estimates~\eqref{interpolation:dos} and \eqref{interpolation:uno}, we arrive at the desired thesis.

\end{proof}
\subsection{Companion and Ritz operators}\label{Comp:Ritz:opertors}

In this subsection we assume the existence of a Companion operator $\Jh$ mapping from the nonconforming VE space $\Vh$ to the continuous space $V$. In addition, by using such operator, we propose the construction of a novel Ritz projection, which allows us to develop a new unified error analysis for our NCVEMs with optimal error estimates under \emph{minimal regularity} condition of the solution (in space).

We start assuming the existence of a Companion operator $\Jh$ satisfying some useful orthogonality properties. 

\begin{assumption}[Existence of a Companion operator]\label{Assump:companion}
	There exists a linear operator $\Jh: \Vh \to V$ satisfying the following properties:	
	\begin{itemize}
		\item[(a)]  $A(v_h-\Jh v_h, \chi_{k})=0$ for all $\chi_k \in \P_k(\O_h)$;
		\item[(b)]   $\nabla(v_h-\Jh v_h) \perp \PP_{k-3}(\O_h)$   in $\LL^2(\O)$, for $k  \geq 3$;
		\item[(c)]  $(v_h-\Jh v_h) \perp \P_{k}(\O_h)$  in $L^2(\O)$, for $k  \geq 2$;
		\item[(d)] $\displaystyle\sum_{j=0}^{2}h^{j-2}|v_h-\Jh v_h|_{j,h} \lesssim \inf_{q_k\in \P_k(\O_h)}|q_k-v_h|_{2,h} + \inf_{v \in V}|v-v_h|_{2,h}.$ 
	\end{itemize}	
\end{assumption}

The construction of companion operators for the specific virtual spaces
$ \Vh $ and boundary conditions considered in this work
is developed in Subsection~\ref{const:Compation-operator}.
While inspired by the recent approach of~\cite{Khot2025_MathComp},
our construction is adapted and extended to the present framework.
\paragraph{A modified Ritz-type operator.} Next, we shall define a novel Ritz-type operator, which will allow 
us to obtain optimal error estimates under minimal regularity assumption (in space) on the continuous solution. 
Indeed, we start by defining useful tools.

Recalling the notation \eqref{notation:broken}, let us consider the forms in the sum space $V + \Vh$,
	\begin{equation}\label{sum:forms}
	\begin{split}
	\widehat{A}(\cdot,\cdot) &:= \alpha_1 A(\cdot,\cdot) + \alpha_2 B(\cdot,\cdot) + \alpha_0 (\cdot,\cdot)_{\O};\\
    \widehat{A}_h(\cdot,\cdot) &:= \alpha_1 A_h(\cdot,\cdot) + \alpha_2 B_h(\cdot,\cdot) + \alpha_0 (\cdot,\cdot)_{\O},		
	\end{split}	
	\end{equation}
	 where	 $A(\cdot,\cdot)$, $B(\cdot,\cdot)$, $A_h(\cdot,\cdot)$ and $B_h(\cdot,\cdot)$ are the bilinear forms defined in~\eqref{bilinear-contA},~\eqref{bilinear-contB},~\eqref{general:discrete:Ah},~\eqref{general:discrete:Bh}, respectively.
	 In order to develop an unified analysis the constant $\alpha_0 \geq 0$ is chosen so that the bilinear forms $\widehat{A}(\cdot,\cdot)$ and $\widehat{A}_h(\cdot,\cdot)$ be coercive in  $\Vh$ (for the three BCs). More precisely, we set 
		 \begin{equation*}
		\alpha_0	 =
		\begin{cases}
			0  & \quad \text{for \CP \: and \SSP};\vspace*{2mm}\\	
			1  & \quad \text{for \CH}.
		\end{cases}
	\end{equation*}	  
	
Further, let $\Jh: \Vh \to V$  be any operator satisfying the properties in Assumption~\ref{Assump:companion}, then for each $\varphi \in V$, we consider the functional (see Figure~\ref{fig:functional}):
\begin{equation}\label{def:functional}
	\begin{split}
		\widehat{A}_{\varphi}&:\Vh \longrightarrow \R\\
		& \quad v_h \longmapsto \widehat{A}_{\varphi}(v_h):= \big(\widehat{A}(\varphi, \cdot) \circ \Jh\big)(v_h)= \widehat{A}(\varphi, \Jh v_h).
	\end{split}
\end{equation}
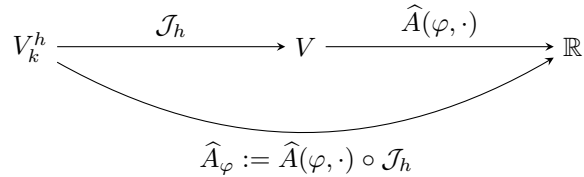
\begin{figure}[ht!]
	\begin{center}
		\begin{tikzpicture}
			\node at (0,0) (a) {$\Vh$};
			\node[right =3cm of a]  (b){$V$};
			\node[right =3cm of b]  (c){$\R$};
			\draw[->,>=stealth] (a) --node[above]{$\Jh $} (b);
			\draw[->,>=stealth] (b) --node[above]{$\widehat{A}(\varphi, \cdot)$} (c);
			\draw[->,>=stealth] (a) edge[bend right=30]node[below]{$\widehat{A}_{\varphi}:= \widehat{A}(\varphi, \cdot) \circ \Jh $} (c);
		\end{tikzpicture}
		\caption{Schematic representation of the construction of the functional
			$\widehat{A}_{\varphi}$ defined in~\eqref{def:functional}.}
		\label{fig:functional} 
	\end{center}
\end{figure}

Thus, using the functional introduced above, we define the Ritz-type operator $\mathcal{R}_h: V \rightarrow  \Vh$  as the solution of problem:
\begin{equation}\label{Ritz:operator}
	\widehat{A}_h(\mathcal{R}_h \varphi, v_h)= \widehat{A}_{\varphi}(v_h) \qquad  \forall v_h \in \Vh.
\end{equation}
Here, $\widehat{A}_h(\cdot,\cdot)$ denotes the discrete bilinear form defined in~\eqref{sum:forms}.

In order to derive error estimates for $\mathcal{R}_h$ we first establish several preliminary results. With this aim, we consider the following norms
	 \begin{equation}\label{not:norms}
	 	\begin{cases}
	|||v|||_{2,\O}	: =	|v|_{2,\O} + \|v\|_{0, \O}  & \quad \text{for $v \in V$};\vspace*{2mm}\\	
	|||v_h|||_{2,h}	: =	|v_h|_{2,h} + \|v_h\|_{0, \O}  & \quad \text{for $v_h \in V + \Vh$}.
			\end{cases}
		\end{equation}

We have the following key result involving the norms defined above.
\begin{lemma}\label{bounds:h1-norms}
For all $v \in V$ and for each $v_h \in \Vh$, it hold true
\begin{align*}
|v|_{1,\O} \lesssim |||v|||_{2,\O}\qquad \text{and} \qquad |v_h|_{1,h} \lesssim |||v|||_{2,h}.
\end{align*}
\end{lemma}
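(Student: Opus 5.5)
The plan is to derive both bounds from one interpolation-type inequality,
\[
|w|_{1,\E}^2 \lesssim |w|_{2,\E}\,\|w\|_{0,\E} + h_\E^{-2}\|w\|_{0,\E}^2 ,
\]
suitably globalized. The continuous bound is the simple case; the discrete bound needs a broken version that exploits the weak continuity built into $\HncT$. The second inequality should of course read $|v_h|_{1,h}\lesssim |||v_h|||_{2,h}$.

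\emph{Continuous case.} Let $v\in V\subset H^2(\O)$. Integrating by parts on the Lipschitz domain $\O$ gives
\[
|v|_{1,\O}^2 = -(\Delta v, v)_{\O} + \int_\Gamma \dn v\, v .
\]
For $\CP$ and $\SSP$ the boundary term vanishes because $v=0$ on $\Gamma$, and it also vanishes for $\CH$ because $\dn v = 0$ there. Hence
\[
|v|_{1,\O}^2 \le \|\Delta v\|_{0,\O}\|v\|_{0,\O} \lesssim |v|_{2,\O}\|v\|_{0,\O} \le |||v|||_{2,\O}^2 .
\]
Alternatively, one can invoke the Gagliardo--Nirenberg/Ehrling inequality $|v|_{1,\O}\lesssim |v|_{2,\O}+\|v\|_{0,\O}$ on $H^2(\O)$, which holds for any bounded Lipschitz domain, convex or not.

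\emph{Discrete case.} For $v_h\in \Vh$, I repeat the computation elementwise:
\[
|v_h|_{1,h}^2 = -\sum_{\E}(\Delta v_h,v_h)_{\E} + \sum_{e\in \EE_h^{\rm tot}} \int_e \jump{v_h\,\dn v_h} .
\]
The volume part is bounded by $|v_h|_{2,h}\|v_h\|_{0,\O}$. On each edge I split the jump as $\jump{v_h\dn v_h}=\{v_h\}\jump{\dn v_h}+\jump{v_h}\{\dn v_h\}$, with obvious one-sided versions on boundary edges. Next I use the orthogonality in \eqref{H2:nonconf}: $\jump{\dn v_h}\perp \P_0(e)$ on every edge (because $k-2\ge0$), and $\jump{v_h}$ vanishes at the edge endpoints. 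For $\CH$ the boundary terms vanish only weakly, since $\dn v_h\perp\P_0(e)$ on $e\subset\Gamma$; for $\CP$ and $\SSP$, $v_h$ vanishes at the boundary vertices. Subtracting edge averages as in Lemma~\ref{lemma:average:edge} and, for $\jump{v_h}$, using a Poincaré inequality along $e$ with zero endpoint values, each edge term becomes a product of edge oscillations. These are bounded by $h_\E^{1/2}|\cdot|_{1,\E}$-type quantities through Lemmas~\ref{trace-scaling} and \ref{lemma:average:edge}, which yields
\[
|v_h|_{1,h}^2 \lesssim |v_h|_{2,h}\|v_h\|_{0,\O} + h\,|v_h|_{2,h}\,|v_h|_{1,h} .
\]
Young's inequality absorbs $|v_h|_{1,h}$ into the left-hand side, giving $|v_h|_{1,h}\lesssim |||v_h|||_{2,h}$ for $h$ bounded.

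\emph{Main obstacle.} The hard step is making the edge terms rigorous with constants independent of $h$ and without negative powers of $h$ in front of $\|v_h\|_{0}$. As a fallback, I would use Assumption~\ref{Assump:companion}. Write $v_h=\Jh v_h+(v_h-\Jh v_h)$ and apply the continuous estimate to $\Jh v_h\in V$. Property (d), with $v=\Jh v_h$ (or any interpolant), gives $|v_h-\Jh v_h|_{1,h}\lesssim h|v_h|_{2,h}$ and $\|v_h-\Jh v_h\|_{0}\lesssim h^2|v_h|_{2,h}$. Therefore
\[
|v_h|_{1,h}\le |\Jh v_h|_{1,\O}+h|v_h|_{2,h}\lesssim |\Jh v_h|_{2,\O}+\|\Jh v_h\|_{0,\O}+|v_h|_{2,h}\lesssim |||v_h|||_{2,h}.
\]
This route avoids the edge computations entirely, so it is the one I would write up.
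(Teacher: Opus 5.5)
Your write-up is correct and is essentially the paper's proof. The continuous bound comes from the same integration by parts, with the boundary term vanishing since $v=0$ or $\dn v=0$ on $\Gamma$. The discrete bound uses exactly the splitting $v_h=\Jh v_h+(v_h-\Jh v_h)$ that the paper uses for $\CH$. The paper instead cites the discrete Friedrichs inequality $\|v_h\|_{0,\O}+|v_h|_{1,h}\lesssim|v_h|_{2,h}$ for $\CP$ and $\SSP$, but running the companion argument in all three cases is equally valid, since Assumption~\ref{Assump:companion} is posed for each of them.

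The only thing to tidy is the use of property (d): take $q_k=0$ and $v=0$ there, because choosing $v=\Jh v_h$ makes the bound circular. This gives $|v_h-\Jh v_h|_{j,h}\lesssim h^{2-j}|v_h|_{2,h}$ for $j=0,1,2$. Hence $|\Jh v_h|_{2,\O}\lesssim|v_h|_{2,h}$ and $\|\Jh v_h\|_{0,\O}\le\|v_h\|_{0,\O}+Ch^2|v_h|_{2,h}$.
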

\begin{proof}
Let us proof the first bound. Indeed, let $v \in V$. Then, for the three types of boundary conditions, we have
\begin{equation*}
|v|_{1,\O}^2 = (\nabla v, \nabla v )_{\O} = ( v, \dn v )_{\partial \O}-  (\Delta v , v )_{\O} =  (\Delta v , v )_{\O}		
\end{equation*}
and consequently, by using the Young's inequality, we have 
\begin{equation}\label{ineq:triple:norm}
|v|_{1,\Omega}^2
\lesssim
\|\Delta v\|_{0,\O}^2 + \|v\|_{0,\O}^2  \lesssim |||v|||^2_{2,\O}.
\end{equation}

Now,  we will proof the  second inequality. Indeed, for the cases $\CP$ and $\SSP$, the proof follows from the fact that (cf. \cite{Zhao2016_M3AS,Zhao2018_JSC,Antonietti2018_M3AS})
\begin{equation*}
\|v_h\|_{0,\O} + |v_h|_{1,h} \lesssim |v_h|_{2,h} \quad \forall v_h \in \Vh.	
\end{equation*}
For the $\CH$ boundary condition case, we consider $v_h \in \Vh$. Then, by using the properties of the operator $\Jh$, and~\eqref{ineq:triple:norm}, we obtain
\begin{equation*}
\begin{split}
|v_h|_{1,h} &\leq |v_h-\Jh  v_h|_{1,h} + |\Jh  v_h|_{1,\O}	\\
&\lesssim |v_h-\Jh  v_h|_{1,h} + |\Jh  v_h|_{2,\O} + \|\Jh v_h\|_{0,\O}	\\
&\lesssim h|v_h|_{2,h} + |v_h|_{2,h} + \|\Jh v_h-v_h\|_{0,\O} +\|v_h\|_{0,\O}\\
&\lesssim 3|v_h|_{2,h} +\|v_h\|_{0,\O}\\
&\lesssim |||v_h|||_{2,h},
\end{split}	
\end{equation*} 
where we used the properties of $\Jh $ (cf. Assumption~\ref{Assump:companion}), and the fact $0<h \leq 1$.

\end{proof} 

For the next result we recall the notations~\eqref{notation:broken} and~\eqref{not:norms}. 
\begin{proposition}\label{prop:A:Ah:forms}
Let $\widehat{A}(\cdot,\cdot)$ and $\widehat{A}_h(\cdot,\cdot)$  be the bilinear forms defined in \eqref{sum:forms}. Then, for all $v,w \in V$  and  $v_h,w_h \in \Vh$ , the following bounds hold
\begin{equation*}
	\begin{split}
|\widehat{A}(v,w)| &  \lesssim |||v|||_{2,\O} |||w|||_{2,\O}
	\qquad \	\quad \text{and} \qquad 	|\widehat{A}(v_h,w_h)|   \lesssim |||v_h|||_{2,h} |||w_h|||_{2,h}\\	
|\widehat{A}_h(v_h,w_h)|  & \lesssim |||v_h|||_{2,h} ||||w_h|||_{2,h}\qquad \text{and} \qquad	
\widehat{A}_h(v_h,v_h) \gtrsim  |||v_h|||_{2,h}^2.			
	\end{split}
\end{equation*}
\end{proposition}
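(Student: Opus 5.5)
The plan is to verify each of the four bounds term by term, using the splitting $\widehat{A}=\alpha_1 A+\alpha_2 B+\alpha_0(\cdot,\cdot)_{\O}$ (and likewise for $\widehat{A}_h$). The tools are Cauchy--Schwarz, Proposition~\ref{prop:miltilinear}, and Lemma~\ref{bounds:h1-norms}, which controls the $H^1$-seminorm (broken or not) by the triple norm.

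\textbf{Continuity of $\widehat{A}$ on $V$.} For $v,w\in V$, Cauchy--Schwarz gives
\[
|A(v,w)|\le |v|_{2,\O}|w|_{2,\O},\qquad |B(v,w)|\le |v|_{1,\O}|w|_{1,\O},\qquad |(v,w)_{\O}|\le\|v\|_{0,\O}\|w\|_{0,\O}.
\]
The first inequality of Lemma~\ref{bounds:h1-norms} gives $|v|_{1,\O}\lesssim|||v|||_{2,\O}$. Each term is therefore bounded by $|||v|||_{2,\O}\,|||w|||_{2,\O}$, and summing with the weights $\alpha_i$ gives the claim.

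\textbf{Continuity of $\widehat{A}$ on $\Vh$.} For $v_h,w_h\in\Vh$ the argument is the same, now elementwise with broken seminorms, since $\widehat{A}$ on $\Vh$ is understood through \eqref{notation:broken}. Cauchy--Schwarz on each $\E$, followed by the discrete Cauchy--Schwarz inequality over the elements, bounds $A$ by $|v_h|_{2,h}|w_h|_{2,h}$ and $B$ by $|v_h|_{1,h}|w_h|_{1,h}$. The second inequality of Lemma~\ref{bounds:h1-norms} then converts $|\cdot|_{1,h}$ into $|||\cdot|||_{2,h}$. Strictly, that lemma is stated with $|||v|||_{2,h}$ on the right, which is a typo for $|||v_h|||_{2,h}$, and I use it in that sense.

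\textbf{Continuity of $\widehat{A}_h$.} Proposition~\ref{prop:miltilinear} gives $|A_h(v_h,w_h)|\le C_A^*|v_h|_{2,h}|w_h|_{2,h}$ and $|B_h(v_h,w_h)|\le C_B^*|v_h|_{1,h}|w_h|_{1,h}$. The $L^2$ term is the exact inner product, so it is handled directly by Cauchy--Schwarz. The $B_h$ bound is then absorbed via Lemma~\ref{bounds:h1-norms}, as above.

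\textbf{Coercivity of $\widehat{A}_h$.} The positivity of $B_h$ lets me drop that term:
\[
\widehat{A}_h(v_h,v_h)\ge \alpha_1 C_{A,*}|v_h|_{2,h}^2+\alpha_0\|v_h\|_{0,\O}^2 .
\]
I then split by boundary condition.
\begin{itemize}
\item For $\CH$, $\alpha_0=1$, so the right-hand side is $\gtrsim |v_h|_{2,h}^2+\|v_h\|_{0,\O}^2\approx|||v_h|||_{2,h}^2$.
\item For $\CP$ and $\SSP$, $\alpha_0=0$. Here I use the discrete Poincaré--Friedrichs inequality $\|v_h\|_{0,\O}+|v_h|_{1,h}\lesssim|v_h|_{2,h}$ on $\Vh$, already invoked in the proof of Lemma~\ref{bounds:h1-norms}. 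It gives $|||v_h|||_{2,h}\lesssim |v_h|_{2,h}$, hence $\widehat{A}_h(v_h,v_h)\gtrsim|||v_h|||_{2,h}^2$.
\end{itemize}

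The only non-routine point is the $\CH$ case of the $H^1$ control. There the discrete $H^1$ seminorm is not controlled by the $H^2$ seminorm alone, because constants lie in the kernel. This is precisely why $\alpha_0=1$ and the $L^2$ term in the triple norm are needed, and why the companion-based proof of Lemma~\ref{bounds:h1-norms} is required for that case. Everything else is routine Cauchy--Schwarz.
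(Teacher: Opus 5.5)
Your proof is correct and follows the paper's route (Cauchy--Schwarz with Proposition~\ref{prop:miltilinear} and Lemma~\ref{bounds:h1-norms}). Your use of the discrete Poincar\'e--Friedrichs inequality for coercivity when $\alpha_0=0$ supplies the \CP/\SSP{} step that the paper dismisses as following ``directly from the definition''; that claim is only literally true in the \CH{} case.

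One side remark is off. Constants are also in the kernel of $|\cdot|_{1,h}$, and in the \CH{} case one still has $|v_h|_{1,h}\lesssim|v_h|_{2,h}$, via $\Jh$ and a Poincar\'e inequality for $\nabla\Jh v_h$, whose normal trace vanishes. So what the $H^2$ seminorm fails to control is $\|v_h\|_{0,\O}$, not the $H^1$ seminorm, and that is the real reason $\alpha_0=1$ is needed.
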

\begin{proof}
We will proof the boundedness of the bilinear form $\widehat{A}_h(\cdot,\cdot)$, since the proof for the continuous form   $\widehat{A}(\cdot,\cdot)$ is analogous. In fact, let  $v_h,w_h \in \Vh$. Then, using Lemma~\ref{bounds:h1-norms}, we have
\begin{equation*}
\begin{split}
|\widehat{A}_h(v_h,w_h)| 
& \leq  \max\{\alpha_0,\alpha_1, \alpha_2\} 	\big(|v_h|_{2,h}|w_h|_{2,h} + |v_h|_{1,h}|w_h|_{1,h}  + ||v_h||_{0,\O} ||w_h||_{0,\O} \big)\\	
& \lesssim \big(|v_h|_{2,h}|w_h|_{2,h} + |||v_h|||_{2,h} |||w_h|||_{2,h} + ||v_h||_{0,\O} ||w_h||_{0,\O} \big) \\	
& \lesssim  |||v_h|||_{2,h} |||w_h|||_{2,h}.
\end{split}
\end{equation*}
Moreover, the  $\Vh$-ellipticity of $\widehat{A}_h(\cdot,\cdot)$ follows directly from its definition (cf. \eqref{sum:forms}).

\end{proof}

The following theorem  is one of the main results of this section. It provides the well definition of the operator $\mathcal{R}_h$ (cf. \eqref{Ritz:operator}) and optimal error estimates in $|||\cdot|||_{2,h}$ and  $|\cdot|_{1,h}$. We point out that this Ritz-type projection is  different from those reported in the existing VEM literature~\cite{Zhao2019_parabolic,Li2021_IMA,Pei2023_CMA}. In particular, we emphasize that the construction of this operator requires only  $H^2$-regularity, and that the error estimates are established under a \emph{minimal spatial regularity} assumption.

\begin{theorem}[Ritz-type projection]\label{Error:Ellip}
Let $\varphi \in V$ and $k \geq 2$. Then, there exists a unique function $\mathcal{R}_h \varphi \in \Vh$ satisfying~\eqref{Ritz:operator}.  
	
Moreover, if $\varphi \in V \cap H^{2+s}(\Omega)$ for some $0 < s \leq k-1$, the following error estimates hold:
	\begin{enumerate}
		\item There exists a positive constant $C$, independent of $h$, such that
		\begin{equation*}
			||| \varphi - \mathcal{R}_h \varphi |||_{2,h} \leq C \, h^{s} \,  \|\varphi\|_{2+s,\Omega}.
		\end{equation*}
		
		\item There exist a constant $C > 0$ and an index $\tilde{s} \in (0,1]$, independent of $h$, such that
		\begin{equation*}
			\vert \varphi - \mathcal{R}_h \varphi \vert_{1,h} \leq C \, h^{s+\tilde{s}} \, \|\varphi\|_{2+s,\Omega}.
		\end{equation*}
		The index $\tilde{s}$ depends on the largest re-entrant angle of the domain $\Omega$ and on the boundary conditions.
	\end{enumerate}
\end{theorem}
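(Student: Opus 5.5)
Well-posedness follows from the Lax--Milgram lemma on the finite-dimensional space $\Vh$ with norm $|||\cdot|||_{2,h}$. By Proposition~\ref{prop:A:Ah:forms}, $\widehat{A}_h$ is bounded and coercive. By the same proposition together with Assumption~\ref{Assump:companion}(d), we have $|\widehat{A}_\varphi(v_h)|\lesssim |||\varphi|||_{2,\O}\,|||\Jh v_h|||_{2,\O}\lesssim |||\varphi|||_{2,\O}\,|||v_h|||_{2,h}$. Here $|||\Jh v_h|||_{2,\O}\lesssim |||v_h|||_{2,h}$ because $|v_h-\Jh v_h|_{j,h}\lesssim h^{2-j}|v_h|_{2,h}$ for $j=0,2$, which comes from taking $q_k=0$ and $v=0$ in (d). Hence $\mathcal{R}_h\varphi$ exists and is unique.

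For estimate 1, I would split $\varphi-\mathcal{R}_h\varphi=(\varphi-\varphi_I)+(\varphi_I-\mathcal{R}_h\varphi)$, set $e_h:=\varphi_I-\mathcal{R}_h\varphi$, and use coercivity:
\[
|||e_h|||_{2,h}^2\lesssim \widehat{A}_h(\varphi_I,e_h)-\widehat{A}(\varphi,\Jh e_h).
\]
I would rewrite the right-hand side elementwise as $[\widehat{A}_h(\varphi_I,e_h)-\widehat{A}(\varphi_I,e_h)]+\widehat{A}(\varphi_I-\varphi,e_h)+\widehat{A}(\varphi,e_h-\Jh e_h)$.

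The first bracket is handled by the standard VEM argument: insert the piecewise polynomial $\varphi_\pi$ of Lemma~\ref{polynomial:interp:result} and use $k$-consistency and stability (A5). This bounds it by $h^s\|\varphi\|_{2+s}|||e_h|||_{2,h}$. The second term is bounded by Assumption~\ref{virtual:interp:result}.

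The key term is the third one. By Assumption~\ref{Assump:companion}(a)--(c), $\widehat{A}(\chi_k,e_h-\Jh e_h)=0$ for every $\chi_k\in\P_k(\O_h)$. For $B$, (b) only gives orthogonality to $\PP_{k-3}$; when $k=2$ I would instead use the bound $|e_h-\Jh e_h|_{1,h}\lesssim h|e_h|_{2,h}$. So I would subtract $\varphi_\pi$ and obtain
\[
\widehat{A}(\varphi-\varphi_\pi,e_h-\Jh e_h)\lesssim h^s\|\varphi\|_{2+s}|||e_h|||_{2,h},
\]
using (d) with $q_k=0$, $v=0$. The $B$- and $L^2$-parts carry extra powers of $h$ via (d). The triangle inequality then gives estimate 1.

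For estimate 2, I would use a duality argument, which I expect to be the main obstacle. Let $\chi\in V$ solve $\widehat{A}(v,\chi)=B(v,\varphi-\mathcal{R}_h\varphi)$ (broken) for all $v\in V$. Elliptic regularity, which the paper would state as Assumption~\ref{assump:reg:add}, gives $\chi\in H^{2+\tilde s}$ with $\|\chi\|_{2+\tilde s}\lesssim|\varphi-\mathcal{R}_h\varphi|_{1,h}$, where $\tilde s$ depends on the re-entrant angles and the boundary conditions. The right-hand side is nonconforming, so I would test it through $\Jh$. Write $\epsilon:=\varphi-\mathcal{R}_h\varphi$ and split
\[
|\epsilon|_{1,h}^2=B(\epsilon,\epsilon-\Jh\epsilon)+B(\varphi-\Jh\mathcal{R}_h\varphi,\epsilon)+\dots,
\]
rewriting the conforming part as $\widehat{A}(\varphi-\Jh\mathcal{R}_h\varphi,\chi)$. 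Then I would insert $\chi_I$, using $\widehat{A}(\varphi,\Jh\chi_I)=\widehat{A}_h(\mathcal{R}_h\varphi,\chi_I)$, to produce:
\begin{itemize}
\item consistency terms $\widehat{A}_h-\widehat{A}$ on $(\mathcal{R}_h\varphi,\chi_I)$, bounded by $h^{s}\cdot h^{\tilde s}$ after inserting the polynomial approximations of both $\varphi$ and $\chi$;
\item terms $\widehat{A}(\varphi,\chi_I-\Jh\chi_I)$ and $\widehat{A}(\mathcal{R}_h\varphi-\Jh\mathcal{R}_h\varphi,\chi)$, handled through the orthogonalities (a)--(c) by subtracting piecewise polynomials, giving $h^{s+\tilde s}$;
\item a term $A(v,\chi-\chi_I)$, handled by Lemma~\ref{lemma:extra:power} with $s_0=\tilde s$ and $v=\varphi$.
\end{itemize}
The delicate part is the bookkeeping of $B(\epsilon,\epsilon-\Jh\epsilon)$ and of the lower-order $B$/$L^2$ contributions, which need the full (d) bounds and, for $k=2$, the extra $h$ factor. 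Collecting all contributions and dividing by $|\epsilon|_{1,h}$ yields $h^{s+\tilde s}\|\varphi\|_{2+s}$.
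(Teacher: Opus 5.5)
Your well-posedness argument and estimate~1 are the paper's Steps~1--2 up to regrouping: your third term is the paper's $T_1+T_2$, and your first two are its $T_3$. Estimate~2 takes a genuinely different route.

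\textbf{How the two duality arguments differ.} The paper writes $|\varphi-\mathcal{R}_h\varphi|_{1,h}\le|\varphi-\varphi_I|_{1,h}+|\theta_h-\Jh\theta_h|_{1,h}+|\Jh\theta_h|_{1,\O}$ with $\theta_h=\varphi_I-\mathcal{R}_h\varphi$. It then applies duality only to the conforming piece, with datum $\nabla\Jh\theta_h$ in \eqref{aux:H1}. Because it tests with $\theta_h$ rather than the full error, the term $\widehat{A}(\xi^*,\varphi_I-\varphi)$ appears, and this is the only place Lemma~\ref{lemma:extra:power} is used.

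You put the full broken error $\epsilon=\varphi-\mathcal{R}_h\varphi$ into the dual datum. Reading $\Jh\epsilon$ as $\varphi-\Jh\mathcal{R}_h\varphi\in V$, your split is exact with just two terms (the ``$+\dots$'' is empty):
$|\epsilon|_{1,h}^2=B(\epsilon,\Jh\mathcal{R}_h\varphi-\mathcal{R}_h\varphi)+\widehat{A}(\varphi-\Jh\mathcal{R}_h\varphi,\chi)$.
Using \eqref{Ritz:operator} with $v_h=\chi_I$ gives
\[
\widehat{A}(\varphi-\Jh\mathcal{R}_h\varphi,\chi)=\widehat{A}(\varphi-\mathcal{R}_h\varphi,\chi-\chi_I)+\widehat{A}(\varphi,\chi_I-\Jh\chi_I)+\big(\widehat{A}_h-\widehat{A}\big)(\mathcal{R}_h\varphi,\chi_I)+\widehat{A}(\mathcal{R}_h\varphi-\Jh\mathcal{R}_h\varphi,\chi).
\]
The first summand of the split is $\lesssim h^{1+s}\|\varphi\|_{2+s,\O}|\epsilon|_{1,h}$. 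This follows from the $j=1$ part of Assumption~\ref{Assump:companion}(d), bounding the infima by $|\varphi_\pi-\mathcal{R}_h\varphi|_{2,h}+|\varphi-\mathcal{R}_h\varphi|_{2,h}$. The four terms on the right are analogues of the paper's $\widehat{A}(\xi^*-\xi^*_I,\varphi-\mathcal{R}_h\varphi)$, $I_{21}+I_{22}$, $I_{23}$ and $I_1$. Each is $\lesssim h^{s+\tilde s}\|\varphi\|_{2+s,\O}\|\chi\|_{2+\tilde s,\O}$ by estimate~1, (a)--(d), $k$-consistency and interpolation.

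So your route needs no Lemma~\ref{lemma:extra:power} at all. Your dual datum $v\mapsto(\nabla_h\epsilon,\nabla v)_{\O}$ is only an $(H^1)'$ functional. That is fine: it relies on the same $(H^1)'$-to-$H^{2+\tilde s}$ shift underlying \eqref{add:reg}, which comes from Blum--Rannacher and Brenner et al. It does not come from Assumption~\ref{assump:reg:add}, which concerns the parabolic data.

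\textbf{Two steps need repair as written.}

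First, the application of Lemma~\ref{lemma:extra:power} ``with $s_0=\tilde s$ and $v=\varphi$'' is mismatched. In that lemma, $s_0\in[0,1]$ is the regularity index of the first argument $v$. With $v=\varphi$ and the dual solution interpolated, you get only $h^{\tilde s+\min\{s,1\}}$, which falls short of $h^{s+\tilde s}$ when $k\ge3$ and $s>1$. Moreover, $A(\varphi,\chi-\chi_I)$ never occurs alone: it is paired with $-\widehat{A}(\mathcal{R}_h\varphi,\chi-\chi_I)$, which by itself is only $O(h^{\tilde s})$. Keep the pair together as $\widehat{A}(\varphi-\mathcal{R}_h\varphi,\chi-\chi_I)$ and bound it by estimate~1 times $|||\chi-\chi_I|||_{2,h}$. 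The paper uses the lemma the other way round, on $A(\xi^*,\varphi_I-\varphi)$, with $s_0=\tilde s$ carried by the dual solution.

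Second, your claim that $\widehat{A}(\chi_k,e_h-\Jh e_h)=0$ for all $\chi_k\in\P_k(\O_h)$ is false for the $B$-part. Property (d) alone gives that part only $O(h)$, which is enough when $s\le1$ but not for $k\ge3$ with $s>1$. In that case use (b), as the paper does for $T_2$:
$B(\varphi,e_h-\Jh e_h)=\sum_{\E}(\nabla\varphi-\boldsymbol{\Pi}_{\E}^{k-3}\nabla\varphi,\nabla(e_h-\Jh e_h))_{\E}\lesssim h^{\min\{2+s,k-1\}}\|\varphi\|_{2+s,\O}|||e_h|||_{2,h}$.
This is $\lesssim h^{s}$ since $s\le k-1$. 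The same device handles the $B$-part of $\widehat{A}(\varphi,\chi_I-\Jh\chi_I)$ in estimate~2.
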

\begin{proof}
We divide the proof into three steps. First, we establish the well-posedness of the Ritz operator. In the second step, we derive the error estimates in the broken $H^2$-norm. Finally, in the third step, we obtain the corresponding $H^1$-error estimates.
We recall that, with the unified notation and the previously established results (cf. Lemma~\ref{bounds:h1-norms} and Proposition~\ref{prop:A:Ah:forms}), the present estimates can be derived uniformly for all boundary conditions.
\paragraph{Step 1: Well-posedness of $\mathcal{R}_h$.} 
We first recall that, by Proposition~\ref{prop:A:Ah:forms}, the bilinear form 
$\widehat{A}_h(\cdot, \cdot)$ is coercive and bounded. 
Moreover, combining the same proposition with property (d) in Assumption~\ref{Assump:companion}, we obtain
	\begin{equation*}
	\begin{split}
	|\widehat{A}_{\varphi}(v_h)| &=|\widehat{A}(\varphi,\Jh  v_h)| \leq \max\{\alpha_0,\alpha_1,\alpha_2\}|||\varphi|||_{2,\O}|||\Jh  v_h|||_{2,\O} \\
	& = \alpha_{M} C_{\varphi}(|\Jh  v_h|_{2,\O}+\|\Jh  v_h\|_{0,\O})\\
	&  \lesssim  |v_h|_{2,h} + 2\|v_h\|_{0,\O} \\
	& \lesssim  |||v_h|||_{2,h},	
	\end{split}	
	\end{equation*}
	where the hidden constant is independent of $h$, $\alpha_{M}=\max\{\alpha_0,\alpha_1,\alpha_2\} $ $C_{\varphi}=|||\varphi|||_{2,\O}$. Therefore,  the  functional $\widehat{A}_{\varphi}(\cdot)$ is bounded in $\Vh$.  Therefore, by applying the   Lax-Milgram theorem we conclude that $\mathcal{R}_h $ is well-defined.
	
\paragraph{Step 2: Error estimates in the $|||\cdot|||_{2,h}$ norm.} 
We now assume that $\varphi \in V \cap H^{2+s}(\Omega)$, with $0 < s \leq k-1$. 
Using the interpolation operator in the virtual space (cf. Assumption~\ref{virtual:interp:result}), we split
\begin{equation}\label{split}
\varphi - \mathcal{R}_h \varphi = (\varphi - \varphi_I) + ( \varphi_I-\mathcal{R}_h \varphi) =: \zeta_h + \theta_h.
\end{equation}

The estimate for $\zeta_h$ follows directly from the virtual element approximation in Assumption~\ref{virtual:interp:result}. 
The term $\theta_h$, however, requires a more detailed analysis. 
In particular, by exploiting the coercivity of $\widehat{A}_h(\cdot,\cdot)$ and the definition of $\mathcal{R}_h$ 
(cf. Proposition~\ref{prop:miltilinear} and~\eqref{Ritz:operator}), 
and then adding and subtracting $\varphi_{\pi} \in \P_{k}(\E)$, we obtain:
	\begin{equation}\label{main:inequ:H2norm}
	\begin{split}
		|||\theta_h|||_{2,h}^2 &\lesssim \widehat{A}_h(\theta_h,\theta_h)=\widehat{A}_h(\varphi_I, \theta_h)-\widehat{A}_h(\mathcal{R}_h \varphi, \theta_h)
		= \widehat{A}_h(\varphi_I, \theta_h)-\widehat{A}(\varphi, \Jh  \theta_h)\\
		&= \widehat{A}(\varphi, \theta_h-\Jh  \theta_h)+(\widehat{A}_h(\varphi_I, \theta_h)-\widehat{A}(\varphi, \theta_h))\\
		&= \widehat{A}(\varphi-\varphi_{\pi},\theta_h- \Jh  \theta_h)+\widehat{A}(\varphi_{\pi}, \theta_h-\Jh  \theta_h) + (\widehat{A}_h(\varphi_I, \theta_h)-\widehat{A}(\varphi, \theta_h))\\
		&=: T_1+T_2 + T_3. 
	\end{split}
\end{equation}

The term $T_1$ is bounded by using the Lemma~\ref{polynomial:interp:result},  the fact that $|\Jh  \theta_h|_{2,\O} \lesssim |\theta_h|_{2,h}$, and Lemma~\ref{bounds:h1-norms} as follows:
	\begin{equation*}
		\begin{split}
			T_1 &= 	\widehat{A}(\varphi-\varphi_{\pi}, \theta_h- \Jh  \theta_h)  \\
	&\lesssim (|||\varphi-\varphi_{\pi}|||_{2,h}+|\varphi-\varphi_{\pi}|_{1,h})(|||\Jh  \theta_h|||_{2,\O}+|||\theta_h|||_{2,h}+|\Jh  \theta_h|_{1,\O}+ |\theta_h|_{1,h})\\
			&\lesssim  (h^{s}+h^{1+s})  |\varphi|_{2+s,\O}|||\theta_h|||_{2,h}.
		\end{split}
	\end{equation*}

For the term $T_2$, we use orthogonality properties (a) and (c) in Assumption~\ref{Assump:companion}, to get
	\begin{equation*}
		\begin{split}
			T_2 &=	\widehat{A}(\varphi_{\pi}, \theta_h- \Jh  \theta_h) = \alpha_2 B(\varphi_{\pi}, \theta_h- \Jh  \theta_h).
		\end{split}
	\end{equation*}
Next, we consider two cases. If $k =2$, from approximation property (d) in Assumption~\ref{Assump:companion}, for $0<s \leq 1$ we obtain
	\begin{equation*}
		\begin{split}
T_2 & = \alpha_2 B(\varphi_{\pi}, \theta_h- \Jh  \theta_h) \lesssim  |\varphi_{\pi}|_{1,h} |\theta_h- \Jh  \theta_h|_{1,h}
\lesssim  h^s \|\varphi\|_{2,\O}  |||\theta_h|||_{2,h}.
		\end{split}
	\end{equation*} 
	On the another hand, if $k \geq 3$, for every $\boldsymbol{q}_{k-3} \in \PP_{k-3}(\O_h)$, we use orthogonality property (b) in Assumption~\ref{Assump:companion}, to obtain
	\begin{equation*}
		\begin{split}
			T_2 &  =\alpha_2 \sum_{\E\in \O_h}  \big(\nabla\varphi_{\pi}, \nabla (\theta_h- \Jh  \theta_h)\big)_{\E} = \alpha_2 \sum_{\E\in \O_h}  \big(\nabla\varphi_{\pi}- \boldsymbol{q}_{k-3}, \nabla (\theta_h- \Jh  \theta_h)\big)_{\E} \\
			&= 	\alpha_2 \sum_{\E\in \O_h} \big( (\nabla\varphi_{\pi} - \nabla\varphi) + (\nabla\varphi- \boldsymbol{q}_{k-3}), \nabla(\theta_h- \Jh  \theta_h)\big)_{\E} \\	
			&\lesssim \big( \|\nabla(\varphi_{\pi} -\varphi)\|_{0,h} + \|\nabla\varphi -\boldsymbol{q}_{k-3}\|_{0,h} \big) |\theta_h- \Jh  \theta_h|_{1,h}.
		\end{split}
	\end{equation*} 
	Hence, taking $\boldsymbol{q}_{k-3} = \boldsymbol{\Pi}_{\E}^{k-3} \nabla \varphi  \in \PP_{k-3}(\O_h)$, and then applying approximation properties of the operators $\boldsymbol{\Pi}_{\E}^{k-3}$ and $ \Jh $ together with Lemma~\ref{polynomial:interp:result}, we derive 
	\begin{equation*}
		T_2  \lesssim  h^{\min\{s,k-2\}} |\varphi|_{2+s,\O} h |||\theta_h|||_{2,h}  \lesssim  h^s |\varphi|_{2+s,\O}|||\theta_h|||_{2,h},
\end{equation*}
where we have used approximation property (d) in Assumption~\ref{Assump:companion}.

Now, we will analyze the term $T_3$. By applying the consistency property of the bilinear form $\widehat{A}^{\:\E}_h(\cdot,\cdot)$ (which is a consequence of $({\bf A5})$ in Assumption \ref{assump:VEM}) we have 
\begin{equation*}
		\begin{split}
			T_3 &= \sum_{\E \in \O_h} \big(\widehat{A}^{\E}_h(\varphi_I, \theta_h)-\widehat{A}^{\E}(\varphi, \theta_h)\big) = \sum_{\E \in \O_h} \big(\widehat{A}^{\:\E}_h(\varphi_I-\varphi_{\pi}, \theta_h) + \widehat{A}^{\:\E}_h(\varphi_{\pi}, \theta_h) -\widehat{A}^{\E}(\varphi, \theta_h)\big) \\
			&=\sum_{\E \in \O_h}  \big(\widehat{A}^{\E}_h(\varphi_I-\varphi_{\pi}, \theta_h) + \widehat{A}^{\E}(\varphi_{\pi}-\varphi, \theta_h)\big) \\
			& \lesssim (|||\varphi_I-\varphi_{\pi}|||_{2,h}  +|\varphi_I-\varphi_{\pi}|_{1,h}+|||\varphi-\varphi_{\pi}|||_{2,h} + |\varphi-\varphi_{\pi}|_{1,h} )|||\theta_h|||_{2,h}\\ 
			& \lesssim  (h^{s}+ h^{1+s})|\varphi|_{2+s,\O} |||\theta_h|||_{2,h}.
		\end{split}
	\end{equation*}
 	
Thus, by combining the above estimates and~\eqref{main:inequ:H2norm}, using identity~\eqref{split} and triangle inequality, we deduce the desired result. 
\paragraph{Step 3: Error estimates in the $|\cdot|_{1,h}$-seminorm.} 
Although the derivation of this bound is based on a duality argument, it is not straightforward and requires additional analysis.
In particular, by employing the Companion operator $\Jh$ together with the interpolation $\varphi_I$ of $\varphi$ in $\Vh$, we obtain
	\begin{equation}\label{ineq:bound:H1:main}
		\begin{split}
			|\varphi-\mathcal{R}_h \varphi|_{1,h}
			&\leq  |\varphi-\varphi_I|_{1,h}+|\theta_h-\Jh  \theta_h|_{1,h}+ |\Jh  \theta_h|_{1,h}\\
			&\leq  |\varphi-\varphi_I|_{1,h}+h|\theta_h|_{2,h}+ |\Jh  \theta_h|_{1,h}\\
			& \lesssim  h^{1+s}|\varphi|_{2+s,\O}+ h(|\varphi_I-\varphi|_{2,h}+|\varphi-\mathcal{R}_h \varphi|_{2,h})+\|\nabla \Jh  \theta_h\|_{0,\O}\\
			& \lesssim  h^{1+s}|\varphi|_{2+s,\O}+\|\nabla \Jh  \theta_h\|_{0,\O},
		\end{split}
	\end{equation}
	where we have used $|\theta_h-\Jh  \theta_h|_{1,h} \lesssim h|\theta_h|_{2,h}$, Assumption~\ref{virtual:interp:result} and the error $H^2$-estimate derived in Step 1.

	Next, to obtain error bounds for the term $\|\nabla \Jh  \theta_h\|_{0,\O}$, we consider the following auxiliary problem: find $\xi^* \in V$ such that
	\begin{equation}\label{aux:H1}
	\widehat{A}(\xi^*,v)= (\nabla \Jh  \theta_h,  \nabla v)_{\Omega} \quad \forall v \in V.
	\end{equation}
	Moreover, from \cite{BR80,Brenner2012_SINUM} there exists $\ts \in(0,1]$ (depending only on $\O$ and the boundary conditions), such that  $\xi^* \in H^{2+\tilde{s}}(\Omega)$ and
	\begin{equation}\label{add:reg}
		\|\xi^*\|_{2+{\ts},\Omega} \lesssim \| \nabla \Jh \theta_h \|_{0,\Omega}.
	\end{equation}
	Next, by choosing $v= \Jh  \theta_h \in V$ in \eqref{aux:H1}, we obtain 
	\begin{equation}\label{eq:dual:arg}
		\begin{split}
			\|\nabla \Jh  \theta_h \|_{0,\O}^2=\widehat{A}(\xi^*,\Jh  \theta_h)
			&= \widehat{A}( \xi^*,\Jh  \theta_h-\theta_h)
			+ \widehat{A}( \theta_h, \xi^*)\\
			&=: I_1 + I_2. 
		\end{split}
	\end{equation}	
For  $\xi_{\pi}^{*}\in \P_{2}(\O_h)$, we use properties (a) and (c) of Assumption~\ref{Assump:companion}, to get
\begin{equation}\label{ident:ortho}
A(\xi^*_{\pi},\Jh  \theta_h-\theta_h)+ \alpha_0(\xi^*_{\pi},\Jh  \theta_h-\theta_h)_{\O}=0.
\end{equation}

Hence, by adding and subtracting $\xi_{\pi}^{*}$ and using~\eqref{ident:ortho},
the definition of $\theta_h$, Lemma~\ref{polynomial:interp:result},
and Assumption~\ref{virtual:interp:result}, we obtain
	\begin{equation}\label{term:I1}
	\begin{split}
		I_1&= \widehat{A}(\xi^*,\Jh  \theta_h-\theta_h) =\widehat{A}(\xi^*-\xi^*_{\pi},\Jh  \theta_h-\theta_h) +B(\xi^*_{\pi},\Jh  \theta_h-\theta_h)\\
		&  \lesssim |||\xi^*-\xi^*_{\pi}|||_{2,h}|||\Jh  \theta_h-\theta_h|||_{2,h} +|\xi^*_{\pi}|_{1,h}\: |\Jh \theta_h-\theta_h|_{1,h}\\
		&	\lesssim  h^{\ts}\|\xi^*\|_{2+{\ts},\O}\: |||\theta_h|||_{2,h}+ h\|\xi^*\|_{2,\O}\:  |\theta_h|_{2,h}\\
		&	\lesssim  h^{\ts}\|\xi^*\|_{2+{\ts},\O}\: |||\theta_h|||_{2,h}\\
		&\lesssim h^{\ts}\|\xi^*\|_{2+{\ts},\O}(|||\varphi_I-\varphi|||_{2,h}+|||\varphi-\mathcal{R}_h \varphi|||_{2,h})\\
		&\lesssim h^{s+\ts}|\varphi|_{2+s,\O}\| \nabla \Jh  \theta_h \|_{0,\O},
	\end{split}
\end{equation}
where we have also used the error estimate in $|||\cdot|||_{2,h}$  norm obtained in Step 1, and bound~\eqref{add:reg}.

The term $I_2$ needs more analysis. We start  by adding and subtracting $\xi^*_I \in V_2^h$ (the $V_2^h$-interpolant of  $\xi \in V \cap H^{2+\ts}(\O)$), then we apply Lemma~\ref{lemma:extra:power}, Assumption~\ref{virtual:interp:result}, and  again the error estimate in the $|||\cdot|||_{2,h}$  norm, to obtain
	\begin{equation*}
		\begin{split}
			I_2 &= \widehat{A}(\xi^*,\theta_h) = \widehat{A}(\xi^*,\varphi_{I}-\varphi) +  \widehat{A}(\xi^*,\varphi-\mathcal{R}_h\varphi)\\
			& =  \big(\alpha_1 A(\xi^*,\varphi_{I}-\varphi) + \alpha_2 B(\xi^*,\varphi_{I}-\varphi) + \alpha_0 (\xi^*,\varphi_{I}-\varphi)_{\O}\big) +  \widehat{A}(\xi^*-\xi^*_I,\varphi-\mathcal{R}_h\varphi)+  \widehat{A}(\xi^*_I,\varphi-\mathcal{R}_h\varphi)\\
			&\lesssim
			h^{s+{\ts}}\|\xi^*\|_{2+{\ts},\O}|\varphi|_{2+s,\O}+ (|||\xi^*-\xi^*_I|||_{2,h}+|\xi^*-\xi^*_I|_{1,h})(|||\varphi-\mathcal{R}_h \varphi|||_{2,h}+|\varphi-\mathcal{R}_h \varphi|_{1,h})+\widehat{A}(\xi^*_I,\varphi-\mathcal{R}_h\varphi)\\
		&\lesssim
		h^{s+{\ts}}\|\xi^*\|_{2+{\ts},\O}|\varphi|_{2+s,\O}+h^{s+{\ts}}\|\xi^*\|_{2+{\ts},\O}|\varphi|_{2+s,\O} +\widehat{A}(\xi^*_I,\varphi-\mathcal{R}_h\varphi)\\
			&\lesssim h^{s+{\ts}}|\varphi|_{2+s,\O}\| \nabla \Jh  \theta_h \|_{0,\O}+ \widehat{A}(\xi^*_I,\varphi-\mathcal{R}_h\varphi).
		\end{split}
	\end{equation*} 

In turn, exploiting the symmetry of $\widehat{A}(\cdot,\cdot)$, the definition of
the operator $\mathcal{R}_h(\cdot)$, and the orthogonality property of $\mathcal{J}_h$,
and arguing as in \eqref{ident:ortho}, we obtain the following identity for the
last term above, valid for every $\varphi_{\pi} \in \mathbb{P}_{k}(\O_h)$:
\begin{equation*}
		\begin{split}
			\widehat{A}(\xi^*_I,\varphi-\mathcal{R}_h\varphi) 
			&= \widehat{A}(\varphi,\xi^*_I)-\widehat{A}(\mathcal{R}_h\varphi,\xi^*_I) = \widehat{A}(\varphi,\xi^*_I-\Jh \xi^*_I)+\widehat{A}(\varphi,\Jh \xi^*_I)-\widehat{A}(\mathcal{R}_h\varphi,\xi^*_I)\\
			&= \widehat{A}(\varphi,\xi^*_I-\Jh \xi^*_I)+\widehat{A}_h(\mathcal{R}_h\varphi,\xi^*_I)-\widehat{A}(\mathcal{R}_h\varphi,\xi^*_I) \\
			&= \widehat{A}(\varphi-\varphi_{\pi},\xi^*_I-\Jh \xi^*_I)+ \alpha_{2} B(\varphi_{\pi},\xi^*_I-\Jh \xi^*_I) +(\widehat{A}_h(\mathcal{R}_h\varphi,\xi^*_I)-\widehat{A}(\mathcal{R}_h\varphi,\xi^*_I))\\
			&=: I_{21} + I_{22} +I_{23}.
		\end{split}
	\end{equation*}
Using arguments analogous to those employed in the estimation of the term $T_2$ in Step~2, we can derive that
\begin{equation*}
I_{2i} \lesssim h^{s+{\ts}}|\varphi|_{2+s,\O} \|\nabla \Jh  \theta_h\|_{0,\O}, \quad \text{for} \quad i=1,2,3.	
\end{equation*}
Therefore, we have
	\begin{equation}\label{term:I2}
		I_2 \lesssim  h^{s+{\ts}}|\varphi|_{2+s,\O} \|\nabla \Jh  \theta_h\|_{0,\O}.
	\end{equation}
	
Finally, by combining the estimates \eqref{eq:dual:arg}, \eqref{term:I1} and \eqref{term:I2}, we obtain
	\begin{equation*}
		\|\nabla \Jh  \theta_h \|_{0,\O} \lesssim  h^{s+{\ts}}\|\varphi\|_{2+s,\O},
	\end{equation*} 
	and from \eqref{ineq:bound:H1:main} we conclude the desired result.
	
\end{proof}
	
We now state a remark concerning the Ritz operator $\mathcal{R}_h(\cdot)$ and the error
estimates established in Theorem~\ref{Error:Ellip}.

\begin{remark}\label{remark:Ritz:min:regu}
We remark that the well-definition of the operator $\mathcal{R}_h(\cdot)$
only requires minimal spatial regularity of the continuous function
$\varphi$, namely $\varphi \in H^{2}(\Omega$) (cf.~\eqref{Ritz:operator}).
Moreover, the error analysis relies on the additional regularity
$\varphi \in H^{2+s}(\Omega)$, with $s \in (0,k-1]$, which reduces to
$s \in (0,1]$ in the lowest-order case $k=2$.
This regularity framework is milder than those commonly adopted in related
VEM analyses of Ritz-type operators, where higher regularity assumptions,
such as $\varphi \in H^{4}(\Omega)$, are typically imposed even for the
well-definition of the operator and in the lowest-order case $k=2$
(see, e.g.,~\cite{Li2021_IMA,Pei2023_CMA,Adak2023_M3AS}).
	\end{remark}	

We conclude this subsection by recalling the continuous and discrete versions of Gr\"{o}nwall’s inequality, which will be used to derive error estimates for the semi- and fully discrete virtual element schemes (cf.~\eqref{semi_discrete:scheme} and~\eqref{fully:dis:schm}, respectively) developed in the next two subsections.
\begin{lemma}[Gr\"{o}nwall’s Lemma]\label{cont:gronwall}
	Let $g$, $h$, and $r$ be nonnegative integrable functions on $[0, T]$ and suppose that $g$ satisfies
	\[
	g(t) \leq h(t) + \int_0^t r(s) g(s) \, ds \quad \text{for all } t \in (0, T).
	\]
	Then
	\[
	g(t) \leq h(t) + \int_0^t h(s) r(s) \exp\left( \int_s^t r(\tau) \, d\tau \right) ds \quad \text{for all } t \in (0, T).
	\]
	
\end{lemma}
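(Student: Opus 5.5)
The plan is the classical integrating-factor argument (Bellman's method). Define
\[
\Phi(t) := \int_0^t r(s)\,g(s)\,ds ,
\]
which is absolutely continuous on $[0,T]$. We have $\Phi(0)=0$ and $\Phi'(t) = r(t)g(t)$ for a.e. $t$. The hypothesis reads $g \le h + \Phi$. Since $r\ge 0$, multiplying by $r$ preserves the inequality and gives the linear differential inequality
\[
\Phi'(t) - r(t)\Phi(t) \le r(t)h(t) \qquad \text{for a.e. } t\in(0,T).
\]

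Next we introduce the integrating factor
\[
E(t) := \exp\Big(-\int_0^t r(\tau)\,d\tau\Big),
\]
which is positive and absolutely continuous, with $E' = -rE$ a.e. The product $E\Phi$ is absolutely continuous, and its derivative satisfies
\[
(E\Phi)' = E\,(\Phi' - r\Phi) \le E\, r h \quad \text{a.e.}
\]
Integrating from $0$ to $t$ and using $\Phi(0)=0$, we obtain
\[
E(t)\Phi(t) \le \int_0^t E(s)\,r(s)\,h(s)\,ds .
\]
Dividing by $E(t)>0$ and noting $E(s)/E(t) = \exp\big(\int_s^t r(\tau)\,d\tau\big)$, this becomes
\[
\Phi(t) \le \int_0^t h(s)\,r(s)\exp\Big(\int_s^t r(\tau)\,d\tau\Big)\,ds .
\]
Substituting this into $g(t)\le h(t)+\Phi(t)$ yields exactly the claimed bound.

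The only delicate point is regularity, and I expect it to be the main obstacle. The product $r g$ must be integrable for $\Phi$ to be defined and absolutely continuous; this is implicit in the hypothesis, since the integral $\int_0^t r(s)g(s)\,ds$ appears there. Likewise, the right-hand side integral $\int_0^t h\,r\,\exp(\cdots)$ could a priori be infinite. In that case the inequality is trivial. Otherwise, the product rule for absolutely continuous functions and the fundamental theorem of calculus in the Lebesgue sense justify each step. If one wants to avoid differentiation altogether, an alternative is to iterate the inequality (Picard-type) and sum the resulting series. I would use the integrating-factor route, as it is shorter.
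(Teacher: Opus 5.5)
Your proof is correct. It is the standard Bellman integrating-factor argument. The paper gives no proof of this lemma: it only recalls it as a classical tool, used with a constant $r$ in the proof of Lemma~\ref{L2:converg:semi:discre:schm}. Your write-up therefore supplies exactly the justification the paper takes for granted.

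One refinement concerns your regularity remark. The integrability of $rg$ is better stated as an extra assumption than read off from the hypothesis. For nonnegative functions, $\int_0^t r g\,ds$ is always defined in $[0,\infty]$, so its appearance in the hypothesis does not force it to be finite. For instance, $r(s)=g(s)=s^{-1/2}$ and $h\equiv 0$ are nonnegative and integrable and satisfy the hypothesis, with right-hand side $+\infty$, yet the conclusion $g\le 0$ fails.

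So $rg\in L^1(0,T)$ should be assumed explicitly, or guaranteed, e.g., by $r$ or $g$ being bounded. In the paper's application $r$ is constant, so this follows from $g\in L^1(0,T)$, and your argument applies verbatim.
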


\begin{lemma}[Discrete Gr\"{o}nwall Lemma~\cite{HR1990}]\label{discrete:gronwall}
	Let $D\geq 0$, $a_j$, $b_j$, $c_j$ and $\lambda_j$ be non negative numbers for any integer $j \geq 0$, such that 
	\begin{equation*}
		a_n+ \Delta t \sum_{j=0}^n b_j \leq \Delta t \sum_{j=0}^n \lambda_j a_j+ \Delta t \sum_{j=0}^n c_j+D,  \quad n \geq 0.
	\end{equation*}
	Suppose that $\Delta t \lambda_j <1$ for all $j$, and set $\sigma_j:=(1-\Delta t \lambda_j)^{-1}$. 
	Then, the following bound holds
	\begin{equation*}
		a_n+ \Delta t \sum_{j=0}^n b_j \leq \exp \Big( \Delta t \sum_{j=0}^n \sigma_j \lambda_j \Big) \Big( \Delta t \sum_{j=0}^n c_j+D\Big).
	\end{equation*}
\end{lemma}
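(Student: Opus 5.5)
The plan is a direct induction on the partial sums, after absorbing the implicit term $\Delta t\,\lambda_n a_n$ on the right into the left-hand side. Fix an index $N \geq 0$ and set, for $0 \le n \le N$,
\[
E_n := a_n + \Delta t \sum_{j=0}^{n} b_j, \qquad F := \Delta t \sum_{j=0}^{N} c_j + D .
\]
Since all $c_j \ge 0$, the hypothesis at every level $n \le N$ still holds with its right-hand constant replaced by the larger, $n$-independent quantity $F$. Moreover $a_j \le E_j$ because $b_j \ge 0$. Working with $F$ frozen at the final level is what keeps the later induction clean, since a nondecreasing $F_n$ would otherwise leave leftover increments $F_{n+1}-F_n$.

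First step (absorption). From the hypothesis at level $n$, after splitting off the term $j=n$ and using $\Delta t\,\lambda_n a_n \le \Delta t\,\lambda_n E_n$, we get
\[
(1-\Delta t\,\lambda_n)\,E_n \le F + W_n, \qquad W_n := \Delta t \sum_{j=0}^{n-1} \lambda_j a_j \le \Delta t \sum_{j=0}^{n-1}\lambda_j E_j .
\]
Because $\Delta t\,\lambda_n<1$, this gives $E_n \le \sigma_n (F + \widetilde W_n)$, where $\widetilde W_n := \Delta t\sum_{j<n}\lambda_j E_j$ and $\widetilde W_0=0$.

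Second step (recursion). Put $Z_n := F + \widetilde W_n$, so that $Z_0 = F$. Then
\[
Z_{n+1} = Z_n + \Delta t\,\lambda_n E_n \le Z_n\,(1+\Delta t\,\sigma_n\lambda_n) \le Z_n \exp(\Delta t\,\sigma_n\lambda_n).
\]
Iterating this gives $Z_N \le F \exp\big(\Delta t\sum_{j=0}^{N-1}\sigma_j\lambda_j\big)$.

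Third step (closing). We have $E_N \le \sigma_N Z_N$. Writing $x=\Delta t\,\lambda_N\in[0,1)$, the identity $\sigma_N = 1/(1-x) = 1 + x\sigma_N$ gives $\sigma_N \le \exp(\Delta t\,\sigma_N\lambda_N)$. This absorbs the last factor into the exponential, and since $N$ was arbitrary, the claimed bound follows.

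The only delicate point is the bookkeeping in the first step. The sum on the right contains the current index $j=n$, so the term $\Delta t\,\lambda_n a_n$ must be absorbed before induction, and this is exactly where $\Delta t\,\lambda_j<1$ and the factors $\sigma_j$ enter. The elementary inequality $1/(1-x)\le e^{x/(1-x)}$ is what lets the final factor $\sigma_N$ be folded into the exponential rather than left outside it.
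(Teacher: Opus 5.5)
Your proof is correct and complete. Freezing the right-hand constant at $F=\Delta t\sum_{j=0}^{N}c_j+D$ is legitimate because $c_j\ge 0$. The absorption step uses exactly the hypothesis $\Delta t\lambda_n<1$. The closing inequality $\sigma_N=1+\Delta t\lambda_N\sigma_N\le\exp(\Delta t\sigma_N\lambda_N)$ is also right.

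The paper does not prove this lemma; it simply quotes it from Heywood--Rannacher. Your argument is essentially the standard proof behind that cited result, so there is no competing route to compare against.

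One observation streamlines your argument. Since $1+\Delta t\sigma_n\lambda_n=\sigma_n$, your recursion is really $Z_{n+1}\le\sigma_n Z_n$. You have therefore shown the slightly sharper bound $a_N+\Delta t\sum_{j=0}^N b_j\le\big(\prod_{j=0}^N\sigma_j\big)\big(\Delta t\sum_{j=0}^N c_j+D\big)$. The stated exponential form then follows by applying $\sigma_j\le\exp(\Delta t\sigma_j\lambda_j)$ to every factor at once, rather than treating the last factor $\sigma_N$ separately.
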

	
\subsection{Error analysis for the semi-discrete formulation} \label{semi_discrete}
In this subsection, we derive an abstract error estimate for the semi-discrete
problem~\eqref{semi_discrete:scheme}. The analysis is carried out under suitable
regularity assumptions on the external force $f$, the initial datum $u_0$, and
the exact solution $u$, which are stated below.

\begin{assumption}[Regularity assumptions on the data and solution]
	\label{assump:reg:add}
	The external force, the initial datum, and the solution of
	problem~\eqref{conti:weak:form} satisfy
	\begin{itemize}
		\item $f(u) \in L^2(0,T; H^{s}(\Omega_h))$ and
		$u_0 \in H^{2+s}(\Omega)$;
		\item $u \in L^2(0,T; H^{2+s}(\Omega)) \cap
		H^1(0,T; H^{1+s}(\Omega))$,
	\end{itemize}
	for some $0 < s \leq k-1$.
\end{assumption}

Assumptions of this kind are commonly adopted in the VEM literature; cf.
\cite{Vacca_parabolico,Adak2019_NMPDE,Zhao2019_parabolic}.

Now, we introduce the Ritz projection operator $\mathcal{R}_h(\cdot)$ and decompose the total error into approximation and discrete components, namely,

\begin{equation}\label{split:semi}
	u_h - u
	= (u_h - \mathcal{R}_h u)
	+ (\mathcal{R}_h u - u) =: \eta_h + \rho_h.
\end{equation}

The second term $\rho_h$ represents the projection error and can be directly controlled by the elliptic
error estimate provided in Theorem~\ref{Error:Ellip}. In particular, we obtain

\begin{equation}\label{esti:rho_h}
	\begin{split}
		\|\rho_h\|_{0,\O} &\lesssim  h^{1+s} |u(t)|_{1+ s,\O}
		\lesssim   h^{1+s} \left| u(0) + \int_{0}^{t} \partial_t u(\tau)\, d\tau \right|_{1+s,\O}\\
		&\lesssim  h^{1+s} \left( |u_0|_{1+s, \O} + \int_{0}^{t} |\partial_t u(\tau)|_{1+s,\O}\, d\tau \right)\\
		&\lesssim  h^{1+s} \left( |u_0|_{1+s}
		+ \sqrt{T} \left( \int_{0}^{t} |\partial_t u(\tau)|_{1+s,\O}^2 \, d\tau \right)^{1/2} \right)\\
		&\lesssim  h^{1+s} \left( \|u_0\|_{2+s,\O}
		+ \|\partial_t u\|_{L^2(0,t; H^{1+s}(\Omega))} \right).
	\end{split}
\end{equation}

The control of $\eta_h$, however, requires a different approach. More precisely, 
we derive a new error equation that is tailored to the
nonconforming virtual setting considered here. This equation is obtained
by combining the continuous and semi-discrete formulations
(cf.~\eqref{conti:weak:form} and~\eqref{semi_discrete:scheme}) with the
definition of the Ritz operator $\mathcal{R}_h$ in~\eqref{Ritz:operator}.
In particular, for all $w_h \in \Vh$, the following identity holds:

\begin{equation}\label{main:semi:dis:err}
	\begin{split}
		M_h(\partial_t \eta_h,w_h)&+ \alpha_1 A_h(\eta_h,w_h) + \alpha_2 B_h(\eta_h,w_h)
		= \big(M_h(\partial_t u_h,w_h)+ \alpha_1 A_h(u_h,w_h) + \alpha_2 B_h(u_h,w_h)\big) \\
		& \quad - M_h(\partial_t \mathcal{R}_hu,w_h)- \widehat{A}_h(\mathcal{R}_h u,w_h)+ \alpha_0 (\mathcal{R}_hu,w_h)_{\O} \\
		&= F_h(u_h;w_h) - M_h(\partial_t \mathcal{R}_hu,w_h)- \widehat{A}_h(\mathcal{R}_h u,w_h)+ \alpha_0 (\mathcal{R}_hu,w_h)_{\O} \\
		&= F_h(u_h;w_h) - M_h(\partial_t \mathcal{R}_hu,w_h)- \widehat{A}( u,\Jh  w_h)+ \alpha_0 (\mathcal{R}_hu,w_h)_{\O} \\
		&= \big (F_h(u_h;w_h)-F(u; \Jh  w_h) \big ) + \big ( M(\partial_t u,\Jh w_h)- M_h(\partial_t \mathcal{R}_hu,w_h) \big )\\
		& \quad \quad + \alpha_0 \big ( (\mathcal{R}_hu,w_h)_{\O} -(u, \Jh w_h)_{\O} \big ) \\
		& = \big(F_h(u_h;w_h)-F(u;w_h)\big) + \big( F(u;w_h)-F(u;\Jh  w_h) \big) \\
		& \quad + M(\partial_t u,\Jh w_h-w_h) + \big(M(\partial_t u,w_h) - M_h(\mathcal{R}_h\partial_t u,w_h)\big)\\
		&\quad + \alpha_0(\mathcal{R}_h u-u,w_h)_{\O} + ( u,w_h-\Jh  w_h)_{\O}\\
		&=: \sum_{i=1}^6 T_i. 
	\end{split}
\end{equation}
	
In what follows, we derive error estimates for each term in \eqref{main:semi:dis:err}. In particular, the control of the terms $T_2$ and $T_3$ relies on a global inverse inequality and on the stability property of the operator $\mathcal{J}_h$ (cf.~\eqref{L2:bound}). This, in turn, motivates the introduction of the following quasi-uniformity assumption on the mesh.
\begin{itemize}
	\item[${\bf (A8)}$]  For each $h > 0$ and for each $\E \in \O_h$, there exists a constant $c > 0$, independent of $h$, such that	$h_{\E} \ge \widehat{c}\, h$.
\end{itemize}
Under this assumption, we have the following global inverse inequality.  
\begin{lemma}[An global inverse estimate]\label{inverse-ineq}
	For each $v_h \in \Vh$	there exists a positive constant $C$ independent of $h$, such that 
	\begin{equation*}
		\begin{split}
			|v_h|_{2,h} &\leq Ch^{-2}\|v_h\|_{0,\O}.
		\end{split}
	\end{equation*}
\end{lemma}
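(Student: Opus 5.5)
The plan is to derive the global estimate from a local inverse inequality on each element and then use quasi-uniformity \textbf{(A8)} to replace $h_{\E}$ by $h$.

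\emph{Step 1 (local inverse inequality).} For every $\E\in\O_h$ and every $v_h\in \VK$ we aim to show
\begin{equation*}
|v_h|_{2,\E}\lesssim h_{\E}^{-2}\|v_h\|_{0,\E}.
\end{equation*}
Since $\VK$ is not a polynomial space, the classical argument does not apply directly. We pull $\E$ back to a reference configuration of unit diameter by the affine scaling $\bx\mapsto(\bx-\bx_{\E})/h_{\E}$. The DoFs \textbf{(D1)}--\textbf{(D3)} are scaled by powers of $h_e$ and $h_{\E}$, so the scaled local space and its DoFs are invariant in form, and Assumption~\ref{assump:unisolvency} shows that $\VK$ is finite dimensional. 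On the scaled element, $|\cdot|_{2}$ and $\|\cdot\|_0$ are a seminorm and a norm on this finite-dimensional space, so $|\hat v|_{2,\hat\E}\le \hat C\|\hat v\|_{0,\hat\E}$. Scaling back gives the factor $h_{\E}^{-2}$, because $|v|_{2,\E}=h_{\E}^{-1}|\hat v|_{2,\hat\E}$ and $\|v\|_{0,\E}=h_{\E}\|\hat v\|_{0,\hat\E}$.

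\emph{Step 2 (uniformity of the constant).} This is the main obstacle: $\hat C$ must not depend on the shape of $\hat\E$, which ranges over an infinite family of scaled polygons. I would argue as follows. Under \textbf{(A6)}--\textbf{(A7)}, the number of edges is uniformly bounded and the scaled polygons form a family that is compact in the Hausdorff sense. The local spaces are defined through elliptic problems whose data (the DoFs) depend continuously on the geometry, so the optimal constant $\hat C(\hat\E)$ is bounded uniformly over this family.

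An alternative that avoids compactness would first use the stability \eqref{eqn:formsStabi:boundedM} together with a norm equivalence of the form $h_{\E}^{-2}S^{\E}(v_h,v_h)\approx |v_h|_{2,\E}^2$ on $\ker(\PiK)$. The polynomial part would then be handled by the standard polynomial inverse inequality on the star-shaped element $\E$. The remaining piece is the bound $S^{\E}(v_h,v_h)\lesssim h_{\E}^{-2}\|v_h\|_{0,\E}^2$, and the edge-moment DoFs in it would be controlled through trace inequalities (Lemma~\ref{trace-scaling}) combined with a bound on the virtual function itself. Both routes rest on the same uniform finite-dimensional argument, which the VEM literature (e.g. \cite{Antonietti2018_M3AS,Huang2021_JCAM}) establishes for these spaces, and I would invoke that.

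\emph{Step 3 (summation).} Squaring the local bound and summing over $\E\in\O_h$ gives
\begin{equation*}
|v_h|_{2,h}^2\lesssim\sum_{\E}h_{\E}^{-4}\|v_h\|_{0,\E}^2.
\end{equation*}
By \textbf{(A8)}, $h_{\E}^{-4}\le \widehat c^{-4}h^{-4}$, hence $|v_h|_{2,h}\le Ch^{-2}\|v_h\|_{0,\O}$, with $C$ independent of $h$.
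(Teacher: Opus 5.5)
Your proposal follows the paper's proof: a local inverse estimate $|v_h|_{2,\E}\lesssim h_{\E}^{-2}\|v_h\|_{0,\E}$ on each element, squared and summed, with \textbf{(A8)} used to replace $h_{\E}^{-4}$ by $\widehat{c}^{\,-4}h^{-4}$. The paper simply takes the local estimate from \cite[Lemma 6.3]{Khot2025_MathComp}. Your Step 2 compactness sketch would not stand on its own, since continuous dependence of the virtual spaces on the element geometry is not established; but because you also fall back on the literature for that step, your argument and the paper's rest on the same footing.
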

\begin{proof}
	From~\cite[Lemma 6.3]{Khot2025_MathComp}, we can obtain the following local inverse inequality
	\begin{equation*}
		|v_h|_{2,\E} \leq C h_{\E}^{-2}\|v_h\|_{0,\E}.
	\end{equation*} 
	Then, under assumption ${\bf (A8)}$ and the above bound, we have 
	\begin{equation*}
		\begin{split}
			|v_h|^2_{2,h} = \sum_{\E \in \O} |v_h|^2_{2,\E} &\leq C \sum_{\E \in \O}h_{\E}^{-4}\|v_h\|^2_{0,\E}\leq C  h^{-4}\sum_{\E \in \O}\|v_h\|^2_{0,\E}  \leq C h^{-4} \|v_h\|^2_{0,\O}.
		\end{split}
	\end{equation*} 
	We conclude the proof by taking the square root on both sides of the inequality.
\end{proof}

As a direct consequence of property (d) and the global inverse inequality stated in Lemma~\ref{inverse-ineq}, we obtain the following stability bound:
\begin{equation}\label{L2:bound}
	\|v_h-\Jh v_h\|_{0,\O} \lesssim \|v_h\|_{0,\O} \qquad \forall v_h \in \Vh.
	\end{equation}

We have the following  bound for  $\|\eta_h\|_{0,\Omega}$.
\begin{lemma}\label{L2:converg:semi:discre:schm}
Let $u(t) \in V$ be the solution of the continuous problem~\eqref{conti:weak:form},
and let $u_h(t) \in \Vh$ denote the solution of the semi-discrete problem~\eqref{semi_discrete:scheme}.
Let Assumption~\ref{assump:reg:add} hold,	then the following error estimate holds:
\begin{equation*}
\begin{split}
	\|\eta_h\|_{0,\Omega}
	\lesssim
	\|u_0 - u_{h,0}\|_{0,\Omega} + h^{s}(& h^{\ts}\|u_0\|_{2+s,\O} + (1+h)\|u\|_{L^2(0,t;H^{2+s}(\O))} \\
	& +\|f(u)\|_{L^2(0,t;H^{s}(\O_h))} + h^{\ts}\|\partial_t u\|_{L^2(0,t;H^{1+s}(\O))} ),
\end{split}
\end{equation*}
where the hidden constant depends on the mesh regularity Assumption~\ref{mesh:regularity}, but it is independent of the mesh size $h$.
\end{lemma}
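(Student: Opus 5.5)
We will take $w_h=\eta_h$ in the error equation~\eqref{main:semi:dis:err}. By Proposition~\ref{prop:miltilinear}, the left-hand side dominates
\[
\tfrac12\tfrac{d}{dt}M_h(\eta_h,\eta_h)+\alpha_1C_{A,*}|\eta_h|_{2,h}^2+\alpha_2C_{B,*}|\eta_h|_{1,h}^2 .
\]
Here we use the symmetry of $M_h$, so that $M_h(\partial_t\eta_h,\eta_h)=\tfrac12\tfrac{d}{dt}M_h(\eta_h,\eta_h)$, with $M_h(\eta_h,\eta_h)\approx\|\eta_h\|_{0,\O}^2$. The strategy is to bound each $T_i$ by
\[
\big(\|\eta_h\|_{0,\O}+\text{data}\cdot h^{s}\big)\,\|\eta_h\|_{0,\O},
\]
possibly plus a small multiple of $|\eta_h|_{2,h}^2$ that can be absorbed into the left-hand side. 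We then integrate in time and apply Gr\"onwall's Lemma~\ref{cont:gronwall}. Since $u_{h,0}=u_I(0)$, the initial value satisfies $\|\eta_h(0)\|_{0,\O}\le\|u_0-u_{h,0}\|_{0,\O}+\|\rho_h(0)\|_{0,\O}$. The second term is bounded by $h^{s+\ts}\|u_0\|_{2+s,\O}$ via Theorem~\ref{Error:Ellip} (item 2 combined with Lemma~\ref{bounds:h1-norms}/Poincar\'e-type bounds), which accounts for the $h^{s+\ts}\|u_0\|_{2+s}$ term in the estimate.

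\textbf{Bounding the terms.}
\begin{itemize}
\item[(i)] For $T_1=F_h(u_h;\eta_h)-F(u;\eta_h)$, we insert $\PioKk$. Write $(\PioKk f(\PioKk u_h)-f(u),\eta_h)$ as $(f(\PioKk u_h)-f(u),\PioKk\eta_h)+(f(u),\PioKk\eta_h-\eta_h)$. The first piece is controlled by Lipschitz continuity \eqref{ineq:Lipt:f} through $\|u_h-u\|_{0}\le\|\eta_h\|_0+\|\rho_h\|_0$, plus $\|u-\PioKk u\|_0$. The second piece is $((I-\Pi_h^{k})f(u),\eta_h)\lesssim h^{s}\|f(u)\|_{s,h}\|\eta_h\|_0$.
\item[(ii)] For $T_2=F(u;\eta_h-\Jh\eta_h)$, we subtract $\Pi_h^{k}f(u)$ using orthogonality (c). Then the stability bound \eqref{L2:bound} gives $\lesssim h^{s}\|f(u)\|_{s,h}\|\eta_h\|_0$.
\item[(iii)] $T_3$ and $T_6$ are treated in the same way. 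For $T_3$, property (c) lets us replace $\partial_tu$ by $\partial_tu-\Pi_h^k\partial_tu$. For $T_6$, we replace $u$ by $u-\Pi_h^ku$. Combined with \eqref{L2:bound}, this gives $h^{1+s}|\partial_tu|_{1+s}$ and $h^{2+s}|u|_{2+s}$ contributions, respectively.
\item[(iv)] For $T_4$, we use $k$-consistency of $M_h$ with $\Pi_h^k\partial_t u$, stability, and $\|\partial_t u-\mathcal R_h\partial_t u\|_0$. The latter is bounded by Theorem~\ref{Error:Ellip} applied to $\partial_t u\in H^{1+s}$ (giving $h^{s+\ts}\|\partial_tu\|_{1+s}$, possibly after an interpolation/shift argument). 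This is the source of the $h^{s+\ts}\|\partial_t u\|_{L^2(H^{1+s})}$ term.
\item[(v)] $T_5$ is simply $\alpha_0\|\rho_h\|_0\|\eta_h\|_0$, bounded via \eqref{esti:rho_h}.
\end{itemize}

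\textbf{Concluding.} Summing, we use Young's inequality to obtain
\[
\tfrac{d}{dt}\|\eta_h\|_0^2\lesssim\|\eta_h\|_0^2+h^{2s}\big(\|u\|_{2+s}^2+\|f(u)\|_{s,h}^2+h^{2\ts}\|\partial_tu\|_{1+s}^2\big).
\]
Integrating over $(0,t)$ and applying Gr\"onwall (with constant $e^{CT}$) yields the claim after taking square roots.

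\textbf{Main obstacle.} We expect the main obstacle to be $T_4$. There $\mathcal R_h$ acts on $\partial_tu$, which is only in $H^{1+s}$, not in $V\cap H^{2+s}$, so Theorem~\ref{Error:Ellip} does not apply directly. First we would instead use $\partial_t\mathcal R_hu=\mathcal R_h\partial_tu$ (linearity, time-independence) and estimate $\|(I-\mathcal R_h)\partial_tu\|_0$ by an $L^2$ duality argument mirroring Step 3 of Theorem~\ref{Error:Ellip}. Failing that, we would accept the weaker $h^{s}$ rate there, which still matches the $h^{s}$ prefactor in the statement.
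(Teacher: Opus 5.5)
Your strategy is the paper's. You test \eqref{main:semi:dis:err} with $w_h=\eta_h$, bound $T_1,\dots,T_6$ by the Lipschitz property, orthogonality (c) of $\Jh$ with \eqref{L2:bound}, $k$-consistency and stability of $M_h$, and the Ritz estimates, then integrate and apply Gr\"onwall. Your splitting of $T_1$ and your use of $\tfrac12\tfrac{d}{dt}M_h(\eta_h,\eta_h)$ are fine, and slightly cleaner than the paper. Wherever you need $\|\rho_h\|_{0,\O}$ (in $T_1$ and $T_5$), use $\|\rho_h\|_{0,\O}\le|||\rho_h|||_{2,h}\lesssim h^{s}\|u\|_{2+s,\O}$ from item~1 of Theorem~\ref{Error:Ellip}, not \eqref{esti:rho_h}. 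The first inequality in \eqref{esti:rho_h} does not follow from that theorem, and item~1 already gives the required $h^{s}\|u\|_{L^2(0,t;H^{2+s}(\O))}$ contribution.

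The step you flag at $T_4$ is a genuine gap, and the paper does not close it either. It simply bounds $\|\PioKk\partial_t u-\mathcal{R}_h\partial_t u\|_{0,\E}$ by $(h^{1+s}+h^{s+\ts})|\partial_t u|_{1+s,\O}$ ``by the approximation property of $\mathcal{R}_h$''. Under the stated hypothesis $u\in H^1(0,T;H^{1+s}(\O))$, neither of your fallbacks works:
\begin{itemize}
\item Theorem~\ref{Error:Ellip} gives no rate unless its argument lies in $V\cap H^{2+s}(\O)$. For $s<1$, $\partial_t u$ need not even belong to $V$, so $\mathcal{R}_h\partial_t u$ is not covered by \eqref{Ritz:operator}.
\item A duality argument mirroring Step~3 cannot start, because Step~3 relies on the energy estimate of Step~2, which needs exactly this regularity.
\item Accepting an $h^{s}$ bound proves a weaker inequality than the stated $h^{s+\ts}\|\partial_t u\|$ term, and even that $h^s$ bound is unavailable without $\partial_t u\in V\cap H^{2+s}(\O)$.
\end{itemize}
The honest repair is to assume $\partial_t u\in L^2(0,T;V\cap H^{2+s}(\O))$. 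Item~1 then gives $\|(I-\mathcal{R}_h)\partial_t u\|_{0,\O}\lesssim h^{s}\|\partial_t u\|_{2+s,\O}$. For \textbf{CP} and \textbf{NC}, item~2 with a broken Poincar\'e--Friedrichs inequality on $V+\Vh$ upgrades this to $h^{s+\ts}$. The final bound then carries $\|\partial_t u\|_{L^2(0,t;H^{2+s}(\O))}$ in place of $\|\partial_t u\|_{L^2(0,t;H^{1+s}(\O))}$.

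The same caveat applies to your initial-value bound $\|\rho_h(0)\|_{0,\O}\lesssim h^{s+\ts}\|u_0\|_{2+s,\O}$. For \textbf{CH} no Poincar\'e inequality is available: constants lie in $\widetilde{H}^2(\O)$, and Lemma~\ref{bounds:h1-norms} goes the other way. There Theorem~\ref{Error:Ellip} gives only $h^{s}$ in $L^2$. The extra factor $h^{\ts}$, here and in $T_4$, would need an $L^2$-duality argument that neither you nor the paper supplies.
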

\begin{proof}
		In what follows, we will establish bounds for the terms $T_i$ ($i=1,\ldots, 6$) in the error equation \eqref{main:semi:dis:err}.  We start with the term $T_1$.  By adding and subtracting adequate terms we have
	\begin{equation*}
		\begin{split}
			T_1 
			& =\sum_{\E \in \O_h} (\PioKk  f(\PioKk u_h)-\PioKk f(\PioKk u),w_h)_{\E} + (\PioKk f(\PioKk u) - \PioKk f(u),w_h)_{\E} 
			+ (\PioKk f( u) - f(u),w_h)_{\E}\\
			& \leq\sum_{\E \in \O_h} \Big(\Vert \PioKk ( f(\PioKk u_h)- f(\PioKk u))\Vert_{0,\E}  +\Vert \PioKk ( f(\PioKk u)- f( u))\Vert_{0,\E} 
			+ \Vert \PioKk f( u) - f(u)\Vert_{0,\E}\Big)\Vert w_h\Vert_{0,\E}\\
			& \leq\sum_{\E \in \O_h} \Big(\Vert  \PioKk u_h-\PioKk u\Vert_{0,\E}  +\Vert \PioKk u- u\Vert_{0,\E} 
			+ \Vert \PioKk f( u) - f(u)\Vert_{0,\E} \Big)\Vert w_h\Vert_{0,\E}\\
			& \lesssim \Big(\Vert  u_h- u\Vert_{0,\O} +h^{2+s}\vert u\vert_{2+s,\O}+ h^{s}\vert f(u)\vert_{s,h}\Big)\Vert w_h\Vert_{0,\O}\\
			& \lesssim \Big(\Vert  \eta_h\Vert_{0,\O} +h^{2+s}\vert u\vert_{2+s,\O} + h^{s}\vert f(u)\vert_{s,h}\Big)\Vert w_h\Vert_{0,\O},
		\end{split}
	\end{equation*}
	where we have used approximation properties of	$\PioKk$ and  Theorem~\ref{Error:Ellip}.
	For the term $T_2$, we  employ the orthogonality property (c) of the Companion operator $\Jh $ in Assumption~\ref{Assump:companion}, approximation properties of $\PioKk$ and bound~\eqref{L2:bound}, as follows: 
	\begin{equation*}
		\begin{split}
			T_2 
			= (f(u)-\PioKk f(u), w_h- \Jh  w_h)_{\O} 
			\lesssim   h^{s}|f(u)|_{s,h} \Vert w_h\Vert_{0,\O}.		
		\end{split}
	\end{equation*}

Next, for the term $T_3$, we add and subtract the term $\Pi_\E^{k} \partial_t u $, then we use orthogonality property in Assumption~\ref{Assump:companion}-(c), and we use again bound~\eqref{L2:bound}, to obtain
	\begin{equation*}
		\begin{split}
			T_3 
			& = M(\partial_t u-\Pi_\E^{k} \partial_t u,\Jh  w_h-w_h)  + M(\Pi_\E^{k} \partial_t u,\Jh  w_h-w_h) \\
			& = M(\partial_t u-\Pi_\E^{k} \partial_t u,\Jh  w_h-w_h) \\
			&\lesssim   h^{1+s} | \partial_t u |_{1+s,\O} \Vert w_h\Vert_{0,\O}.		
		\end{split}
	\end{equation*}
The last term $T_4 $ is bounded by using the polynomial consistency property of form $M_h(\cdot,\cdot)$ (cf. $({\bf A5})$ in Assumption~\ref{assump:VEM}) as follows
	\begin{equation*}
		\begin{split}
			T_4&=M(\partial_t u,w_h)-M_h(\mathcal{R}_h \partial_t  u, w_h)\\
			& =\sum_{\E \in \O_h}\Big(M^{\E}(\partial_t u-\PioKk \partial_t u, w_h )
			+M_h^{\E}(\PioKk \partial_t u-\mathcal{R}_h \partial_t u,w_h) \Big)\\
			&\lesssim  \sum_{\E \in \O_h}\Big(\Vert\partial_t u-\PioKk \partial_t u\Vert_{\E}\Vert w_h\Vert_{0,\E}
			+\Vert\PioKk \partial_t u-\mathcal{R}_h \partial_t u\Vert_{0,\E}\Vert w_h\Vert_{0,\E}\Big)\\
			&\lesssim (h^{1+s}+  h^{s+\ts})|\partial_t u|_{1+s,\O}\Vert w_h\Vert_{0,\O},
		\end{split}
	\end{equation*}
	where we have used  the Cauchy-Schwarz inequality, the approximation property of $\PioKk$ and $\mathcal{R}_h$.
	
By using similar arguments, along with Assumption \ref{Assump:companion} and Theorem \ref{Error:Ellip}, we can obtain
		\begin{equation*}
			T_5+ T_6 \lesssim h^{s}|u|_{2+s,\O}\Vert w_h\Vert_{0,\O}.
	\end{equation*}
Then, by combining the above estimates, taking $w_h =\eta_h \in \Vh$ in \eqref{main:semi:dis:err} and applying the Young inequality,  we obtain
\begin{equation*}
\begin{split}
\beta_{*} \frac{1}{2} \frac{d}{dt} \Vert \eta_h \Vert^2_{0,\O}	 + \alpha_{1,*}\vert \eta_h \vert^2_{2,h}+   \alpha_{2,*} \vert \eta_h \vert^2_{1,h}  &\lesssim  \Vert \eta_h  \Vert^2_{0,\O}+ h^{2(1+s)}|u|^2_{2+s,\O}  
+h^{2s}|f(u)|^2_{s,h}  \\
&\quad+(h^{2(1+s)}+ h^{2(s+ \ts)})|\partial_t u|^2_{1+s,\O} .
\end{split}	
\end{equation*}

Thus, dropping the positive  term $ \alpha_{1,*}\vert \eta_h \vert^2_{2,h}+   \alpha_{2,*} \vert \eta_h \vert^2_{1,h}$  and integrating over the interval $(0,t)$, we obtain 
\begin{equation*}
\begin{split}
 \Vert \eta_h \Vert^2_{0,\O} \lesssim \Vert \eta_h(0) \Vert^2_{0,\O} + \int_{0}^t \Vert \eta_h  \Vert^2_{0,\O} + h^{2s}&\left( h^{2\ts}|u_0|^2_{2+s,\O} + (1+h^2)\|u\|^2_{L^2(0,t;H^{2+s}(\O))}  \right.\\ 
 &\quad \left. + \: \|f(u)\|^2_{L^2(0,t;H^{s}(\O_h))} + h^{2\ts}\|\partial_t u\|^2_{L^2(0,t;H^{1+s}(\O))} \right).
\end{split}	
\end{equation*} 
Moreover, by using Theorem~\ref{Error:Ellip} and Assumption~\ref{virtual:interp:result}, we have that 
\[
\|\eta_h(0)\|_{0,\O} \lesssim  \|u_{h,0}-u_{0}\|_{0,\O} + h^{s+\ts}\,|u_{0}|_{2+s,\O}.
\]

Then, from the Gr\"{o}nwall inequality (cf. Lemma~\ref{cont:gronwall}), we have 
\begin{equation}\label{ineq:eta:L2}
\begin{split}
\Vert \eta_h \Vert_{0,\O} \lesssim \Vert u_0- u_{h,0}  \Vert_{0,\O} + h^{s}&\left( h^{\ts}|u_0|_{2+s,\O} +(1+h)\|u\|_{L^2(0,t;H^{2+s}(\O))} \right. \\ 
& \left. \quad + \: \|f(u)\|_{L^2(0,t;H^{s}(\O_h))} + h^{\ts}\|\partial_t u\|_{L^2(0,t;H^{1+s}(\O))} \right).
\end{split}	
\end{equation}

\end{proof}
	
Using Lemma~\ref{L2:converg:semi:discre:schm}, we obtain the following convergence result in the $H^{2}$-seminorm.
		\begin{theorem}\label{converg:semi:discre:schm}
Under same assumptions of Lemma~\ref{L2:converg:semi:discre:schm}, the following error estimate holds 	
\begin{equation*}
\begin{split}
|u-u_h|_{2,h}  \lesssim   |u_0- u_{h,0}|_{1,h} + |||u_0- u_{h,0}|||_{2,h} + h^{s}&\left( h^{\ts}\|u_0\|_{2+s,\O} +(1+h)\|u\|_{L^2(0,t;H^{2+s}(\O))} \right. \\
&  \quad \left. + \: \|f(u)\|_{L^2(0,t;H^{s}(\O_h))} + (1+h^{\ts})\|\partial_t u\|_{L^2(0,t;H^{1+s}(\O))} \right),
\end{split}
\end{equation*}
where  the hidden constant depends on mesh regularity Assumption~\ref{mesh:regularity},
but is independent of mesh size $h$.
\end{theorem}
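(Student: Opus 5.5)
We split $u-u_h=-(\eta_h+\rho_h)$ as in \eqref{split:semi}, so that
\[
|u-u_h|_{2,h}\le|\rho_h|_{2,h}+|\eta_h|_{2,h}.
\]
The projection error is handled by Theorem~\ref{Error:Ellip}(1): $|\rho_h(t)|_{2,h}\lesssim h^s\|u(t)\|_{2+s,\O}$. To express the pointwise-in-time norm through the data, we write $u(t)=u_0+\int_0^t\partial_t u$, exactly as was done for \eqref{esti:rho_h}, or simply control the $L^2(0,t;H^{2+s})$ norm. The remaining work is a bound for $|\eta_h|_{2,h}$. The $L^2$ bound of Lemma~\ref{L2:converg:semi:discre:schm} does not suffice here (it would require an inverse inequality and lose $h^{-2}$), so we use an energy argument with a time-differentiated test function.

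\textbf{Step 1 (energy identity).} We take $w_h=\partial_t\eta_h$ in the error equation \eqref{main:semi:dis:err}. Since $A_h$ and $B_h$ are symmetric, this gives
\[
M_h(\partial_t\eta_h,\partial_t\eta_h)+\tfrac12\tfrac{d}{dt}\big(\alpha_1A_h(\eta_h,\eta_h)+\alpha_2B_h(\eta_h,\eta_h)\big)=\sum_{i=1}^6T_i(\partial_t\eta_h).
\]
Proposition~\ref{prop:miltilinear} makes the left side bounded below by $C_{M,*}\|\partial_t\eta_h\|_{0,\O}^2$ plus the time derivative of a quantity equivalent to $|\eta_h|_{2,h}^2+|\eta_h|_{1,h}^2$.

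\textbf{Step 2 (bounding the $T_i$).} The estimates for $T_1,\dots,T_6$ in the proof of Lemma~\ref{L2:converg:semi:discre:schm} all have the form $(\cdots)\|w_h\|_{0,\O}$, with $w_h$ entering only through its $L^2$ norm; this relies on \eqref{L2:bound} and orthogonality (c). They therefore remain valid for $w_h=\partial_t\eta_h$, and give
\[
\sum T_i\lesssim\big(\|\eta_h\|_{0,\O}+h^{s}|f(u)|_{s,h}+h^{s}|u|_{2+s,\O}+(h^{1+s}+h^{s+\ts})|\partial_tu|_{1+s,\O}\big)\|\partial_t\eta_h\|_{0,\O}.
\]
Young's inequality then absorbs $\|\partial_t\eta_h\|^2_{0,\O}$ into the left side.

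\textbf{Step 3 (integration in time).} Integrating over $(0,t)$ and applying coercivity yields
\[
|\eta_h(t)|_{2,h}^2+|\eta_h(t)|_{1,h}^2\lesssim |||\eta_h(0)|||_{2,h}^2+|\eta_h(0)|_{1,h}^2+\int_0^t\|\eta_h\|_{0,\O}^2+h^{2s}\big(\cdots\big).
\]
We bound $\int_0^t\|\eta_h\|^2_{0,\O}$ by Lemma~\ref{L2:converg:semi:discre:schm}, which introduces the $\|u_0-u_{h,0}\|_{0,\O}$ contribution, absorbed into $|||u_0-u_{h,0}|||_{2,h}$. For the initial term we use $\eta_h(0)=(u_{h,0}-u_0)+(u_0-\mathcal R_hu_0)$ together with Theorem~\ref{Error:Ellip}, which gives $|||\eta_h(0)|||_{2,h}+|\eta_h(0)|_{1,h}\lesssim |||u_0-u_{h,0}|||_{2,h}+|u_0-u_{h,0}|_{1,h}+h^s\|u_0\|_{2+s,\O}$. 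This produces the first two terms of the statement. Taking square roots and adding the bound on $\rho_h$ completes the argument. The $(1+h^{\ts})\|\partial_tu\|$ factor comes from the $T_3,T_4$ contributions together with the $h^{s}$ bound of $\rho_h$ written through $\partial_tu$.

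\textbf{Main obstacle.} The delicate point is $T_4$ with $w_h=\partial_t\eta_h$. Its estimate needs $\|\partial_tu-\mathcal R_h\partial_tu\|_{0,\O}$, where $\mathcal R_h$ commutes with $\partial_t$ because it is linear and time-independent. This requires $\partial_tu\in H^{2+s}$ to apply Theorem~\ref{Error:Ellip}, whereas only $H^{1+s}$ is assumed. We would first try to use just the $L^2$-stability of $\mathcal R_h$ combined with an $H^{1+s}$ approximation argument. If that fails, the fallback is to integrate $T_4$ by parts in time, moving $\partial_t$ from $\eta_h$ onto $u$. The remaining terms should be routine.
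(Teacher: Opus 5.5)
Your outline is the paper's proof step for step. You test \eqref{main:semi:dis:err} with $w_h=\partial_t\eta_h$, reuse the bounds on the $T_i$ from Lemma~\ref{L2:converg:semi:discre:schm} (all of the form $(\cdots)\|w_h\|_{0,\O}$), absorb $\|\partial_t\eta_h\|_{0,\O}^2$ by Young, integrate, insert \eqref{ineq:eta:L2} and the bound on $\eta_h(0)$, and add $\rho_h$.

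The step you flag as the main obstacle is a genuine gap, and the paper does not close it either. The paper imports the Lemma's bound $T_4\lesssim(h^{1+s}+h^{s+\ts})|\partial_t u|_{1+s,\O}\|w_h\|_{0,\O}$. Its only justification is Theorem~\ref{Error:Ellip} applied to $\varphi=\partial_t u$, which requires $\partial_t u(t)\in V\cap H^{2+s}(\O)$. Assumption~\ref{assump:reg:add} only gives $\partial_t u\in L^2(0,T;H^{1+s}(\O))$.

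Neither of your fallbacks repairs this:
\begin{itemize}
\item \emph{$L^2$-stability of $\mathcal{R}_h$.} For $s<1$, $\partial_t u$ need not lie in $V$, so $\mathcal{R}_h\partial_t u$ is not even defined, since the functional \eqref{def:functional} involves $\nabla^2\varphi$. The identity $\partial_t\mathcal{R}_h u=\mathcal{R}_h\partial_t u$ is therefore meaningless too. There is nothing for a stability argument to act on, and no $L^2$-stability of this $H^2$-type projection is available anyway.
\item \emph{Integration by parts in time.} With test function $\partial_t\eta_h$, this produces $M(\partial_t u,\eta_h)-M_h(\partial_t\mathcal{R}_h u,\eta_h)$ at times $0$ and $t$, plus $\int_0^t\big(M(\partial_{tt}u,\eta_h)-M_h(\partial_{tt}\mathcal{R}_h u,\eta_h)\big)$. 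You then still need $\|\partial_t\rho_h\|_{0,\O}$, now pointwise in time, and in addition $\|\partial_{tt}\rho_h\|_{0,\O}$. That is strictly more regularity, not less.
\end{itemize}

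The same deficit appears in two further places in your write-up:
\begin{itemize}
\item Lemma~\ref{L2:converg:semi:discre:schm}, which you use to bound $\int_0^t\|\eta_h\|_{0,\O}^2$, rests on the same $T_4$ bound.
\item The pointwise term $|\rho_h(t)|_{2,h}\lesssim h^s\|u(t)\|_{2+s,\O}$ is not controlled by $\|u\|_{L^2(0,t;H^{2+s}(\O))}$. The device $u(t)=u_0+\int_0^t\partial_t u$ from \eqref{esti:rho_h} works there only because it is carried out in $H^{1+s}$. In $H^{2+s}$ it needs $\partial_t u\in L^1(0,t;H^{2+s}(\O))$.
\end{itemize}

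What is missing is a hypothesis, not a trick: assume $\partial_t u\in L^2(0,T;V\cap H^{2+s}(\O))$. Then Theorem~\ref{Error:Ellip}(1) gives $\|\partial_t\rho_h\|_{0,\O}\le|||\partial_t\rho_h|||_{2,h}\lesssim h^s\|\partial_t u\|_{2+s,\O}$. This is exactly the order $T_4$ needs, both in the Lemma and here. It also gives $|\rho_h(t)|_{2,h}\lesssim h^s\big(\|u_0\|_{2+s,\O}+\sqrt{t}\,\|\partial_t u\|_{L^2(0,t;H^{2+s}(\O))}\big)$.

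With that assumption your energy argument closes. The resulting bound carries $h^s\|u_0\|_{2+s,\O}$ and $h^s\|\partial_t u\|_{L^2(0,t;H^{2+s}(\O))}$. These replace the $h^{s+\ts}\|u_0\|_{2+s,\O}$ and the $H^{1+s}$-norm of $\partial_t u$ that appear in the displayed statement. Your initial-data bound with $h^s\|u_0\|_{2+s,\O}$ is the correct one, since Theorem~\ref{Error:Ellip}(1) provides no extra factor $h^{\ts}$ in $|||\cdot|||_{2,h}$.
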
	
\begin{proof}
By combining the estimates for the term $T_i$, with $i=1, \ldots, 4$, with $w_h:=\partial_t\eta_h \in \Vh$ in \eqref{main:semi:dis:err} and applying the Young inequality,  we obtain
\begin{equation}\label{ineq:pre:H2bound}
\begin{split}
\beta_{*} \Vert \partial_t \eta_h \Vert^2_{0,\O}+ \frac{1}{2} \frac{d}{dt}  (\alpha_{1,*}\vert \eta_h \vert^2_{2,h}+   \alpha_{2,*} \vert \eta_h \vert^2_{1,h}) &\lesssim  \frac{\beta_*}{2} \Vert \partial_t\eta_h  \Vert^2_{0,\O} + \Vert \eta_h  \Vert^2_{0,\O}\\
& \quad  +   h^{2s}  (|u|^2_{2+s,\O}  +|\partial_t u|^2_{1+s,\O} +|f(u)|^2_{s,h}).
\end{split}	
\end{equation}

Now, we absorb the term $\frac{\beta_*}{2}\Vert\partial_t\eta_h \Vert^2_{0,\Omega}$ into the left-hand side of~\eqref{ineq:pre:H2bound} and drop the resulting positive term. Then, using the estimate for $\Vert \eta_h  \Vert_{0,\O} $ (cf. \eqref{ineq:eta:L2}) and integrating over the interval $[0,t]$ and  we derive
		\begin{equation*}
			\begin{split}
\vert \eta_h \vert^2_{2,h}+  \vert \eta_h \vert^2_{1,h} \lesssim  \vert \eta_h(0) \vert^2_{2,h}+  \vert \eta_h(0) \vert^2_{1,h} &+\Vert u_0- u_{h,0}\Vert_{0,\O} + h^{2s} \left( h^{\ts}|u_0|_{2+s,\O} +(1+h)\|u\|_{L^2(0,t;H^{2+s}(\O))} \right.\\
& \left. \qquad + \: \:
\|f(u)\|_{L^2(0,t;H^{s}(\O_h))} + (1+h^{\ts})\|\partial_t u\|_{L^2(0,t;H^{1+s}(\O))} \right).				
			\end{split}
		\end{equation*}
Finally, since
\[
|\eta_h(0)|_{2,h} + |\eta_h(0)|_{1,h}
\lesssim |u_0 - u_{h,0}|_{2,h} + |u_0 - u_{h,0}|_{1,h}
+ h^{s+\ts}\,|u_0|_{2+s,\Omega},
\]
the desired estimate follows by discarding the nonnegative term
$|\eta_h|_{1,h}^2$ in the previous bounds, and by invoking the decomposition
\eqref{split:semi}, the triangle inequality, and estimate~\eqref{esti:rho_h}.

\end{proof}
\subsection{Error analysis for the fully-discrete scheme}
	In this section we will provide error estimate for the fully-discrete scheme. By using the Ritz projection operator $\mathcal{R}_h$, we split the error as
	\begin{equation}\label{split:fully}
		u^n_h-u(t_n)=\big(u^n_h-\mathcal{R}_h u(t_n)\big)+ \big(\mathcal{R}_h u(t_n)- u(t_n) \big)=: \eta_h^n + \rho^n_h.
	\end{equation}

Since the error bound for $\rho^n_h$ is obtained from Theorem~\ref{Error:Ellip}, similarly as \eqref{esti:rho_h}, we will focus on the term $\eta^n_h$ in \eqref{split:fully}. Indeed, as usual, first we will establish an error equation. By using the continuous and fully-discrete formulation (cf. \eqref{conti:weak:form} and \eqref{fully:dis:schm}), together with the Ritz projection $\mathcal{R}_h$ defined in \eqref{Ritz:operator}, we obtain the following error equation (similar as in \eqref{main:semi:dis:err}).
\begin{equation}\label{main:fully:dis:err}
		\begin{split}
			M_h(\delta_t \eta^n_h,w_h)&+ \alpha_1 A_h(\eta^n_h,w_h) + \alpha_2 B_h(\eta^n_h,w_h)\\
			& = \big( F_h(u(t_n)_h;w_h)-F(u(t_n));w_h) \big) +  \big(F(u(t_n);w_h)-F(u(t_n);\Jh  w_h) \big) \\
			& \quad +M(\partial_t u(t_n),\Jh w_h-w_h)	+ \big( M(\partial_t u(t_n),w_h) - M_h(\mathcal{R}_h\delta_t u(t_n) ,w_h)\big)\\
			&\quad + \alpha_0(\mathcal{R}_h u(t_n)-u(t_n),w_h)_{\O} + \alpha_0( u(t_n),w_h-\Jh  w_h)_{\O}\\
			&=: \sum_{i=1}^6  T^n_i.
		\end{split}
	\end{equation}

The following result provides an error estimate in the broken $\ell^2(H^2)$-norm for the proposed numerical scheme. In addition to Assumption~\ref{assump:reg:add}, we further assume that
\[
u \in  H^2(0, t; L^2(\O)) \quad  \text{for a.e.\: $0<t \leq T$.}
\]
This type of regularity assumption is standard in the VEM literature; see, for instance, \cite{Vacca_parabolico,Adak2019_NMPDE,Zhao2019_parabolic}.

\begin{theorem}\label{converg:fully:discre:schm}
	Let $u(t_n) \in V$ be the solution of the continuous problem at time $t=t_n$ and $u_h^n \in \Vh $ be the VE solution generated by \eqref{fully:dis:schm}. Under mesh Assumption~\ref{mesh:regularity} and the assumptions of Theorem~\ref{theorem:well-posed:fully}, then the following estimate holds
	\begin{equation*}
\Vert u-u_h \Vert_{\ell^2(0,t_n;H^2(\O_h))} \lesssim h^{s} + \Delta t,
	\end{equation*}
	where the hidden constant is positive, and depends on mesh regularity parameter, Sobolev regularity of the exact solution $u$, $\partial_t u$, and $\partial_{tt} u$, and nonlinear force function $f(u)$, but independent of mesh size $h$, and $\Delta t$.
\end{theorem}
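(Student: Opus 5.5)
We argue exactly as in the semi-discrete case, now at the fully-discrete level. Split $u_h^n-u(t_n)=\eta_h^n+\rho_h^n$ as in \eqref{split:fully}. The projection part is handled by Theorem~\ref{Error:Ellip}: $|||\rho_h^n|||_{2,h}\lesssim h^s\|u(t_n)\|_{2+s,\O}$. Summing with weight $\Delta t$ then gives $\|\rho_h\|_{\ell^2(0,t_n;H^2(\O_h))}\lesssim h^s\|u\|_{L^2(0,T;H^{2+s}(\O))}$, up to a standard comparison between the discrete and continuous $L^2$-in-time norms. All remaining work concerns $\eta_h^n$, which we control through the error equation \eqref{main:fully:dis:err}.

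\textbf{Step 1 (an $\ell^\infty(L^2)$ bound).} Test \eqref{main:fully:dis:err} with $w_h=\eta_h^n$. On the left-hand side, the identity $2(a-b,a)=a^2-b^2+(a-b)^2$ applied to $M_h$, combined with the stability in Proposition~\ref{prop:miltilinear}, yields $\frac{1}{2\Delta t}(M_h(\eta_h^n,\eta_h^n)-M_h(\eta_h^{n-1},\eta_h^{n-1}))+C_{A,*}\alpha_1|\eta_h^n|_{2,h}^2$.

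On the right-hand side, the terms $T_1^n,T_2^n,T_3^n,T_5^n,T_6^n$ are bounded exactly as $T_1,\dots,T_6$ in the proof of Lemma~\ref{L2:converg:semi:discre:schm}. These bounds use Lipschitz continuity of $f$, property (c) of $\Jh$, the stability bound \eqref{L2:bound} (which requires (A8)), and Theorem~\ref{Error:Ellip}. Each is bounded by $(\|\eta_h^n\|_{0,\O}+h^s\mathcal{N}(t_n))\|\eta_h^n\|_{0,\O}$, where $\mathcal{N}$ collects the Sobolev norms of $u$, $\partial_t u$ and $f(u)$.

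The new ingredient is $T_4^n$, which we split as
\[
M(\partial_t u(t_n),w_h)-M_h(\mathcal{R}_h\delta_t u(t_n),w_h)=\big[M(\partial_t u(t_n)-\delta_t u(t_n),w_h)\big]+\big[M(\delta_t u(t_n),w_h)-M_h(\mathcal{R}_h\delta_t u(t_n),w_h)\big].
\]
The first bracket is the time-truncation error. By Taylor's formula with integral remainder, $\|\partial_t u(t_n)-\delta_tu(t_n)\|_{0,\O}\le \Delta t^{1/2}\|\partial_{tt}u\|_{L^2(t_{n-1},t_n;L^2(\O))}$. The second bracket is handled as $T_4$ in the semi-discrete proof, using linearity of $\mathcal{R}_h$ and writing $\delta_t u(t_n)=\Delta t^{-1}\int_{t_{n-1}}^{t_n}\partial_t u$. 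This gives $h^{s}\Delta t^{-1/2}\|\partial_t u\|_{L^2(t_{n-1},t_n;H^{1+s})}$.

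Multiplying by $\Delta t$, summing over $n$, and applying the discrete Gr\"onwall Lemma~\ref{discrete:gronwall} (admissible because of the time-step restriction \eqref{asump:Deltat}, which makes $\Delta t\lambda_j<1$) gives
\[
\max_n\|\eta_h^n\|^2_{0,\O}+\Delta t\sum_n|\eta_h^n|_{2,h}^2\lesssim \|\eta_h^0\|_{0,\O}^2+h^{2s}+\Delta t^2 .
\]

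\textbf{Step 2 (conclusion).} The estimate above already contains $\Delta t\sum_n|\eta_h^n|^2_{2,h}$, so the $\ell^2(H^2)$ bound for $\eta_h$ follows directly once we control $\eta_h^0$. With $u_h^0=u_I(0)$, Assumption~\ref{virtual:interp:result} and Theorem~\ref{Error:Ellip} give $\|\eta_h^0\|_{0,\O}\lesssim h^{s}\|u_0\|_{2+s,\O}$. Combining this with the bound for $\rho_h$ and the triangle inequality yields $\|u-u_h\|_{\ell^2(0,t_n;H^2(\O_h))}\lesssim h^s+\Delta t$.

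If the $|\eta_h|_{2,h}$ term on the left is deemed insufficient for some reason, an alternative is to test with $w_h=\delta_t\eta_h^n$, mimicking Theorem~\ref{converg:semi:discre:schm}. This produces a telescoping $A_h$-term and an $\ell^\infty(H^2)$ bound, but it requires bounding $T_i^n$ against $\|\delta_t\eta_h^n\|_{0,\O}$.

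The main obstacle is the term $T_4^n$, and more generally matching the time levels. The error equation mixes $\partial_t u(t_n)$ on the continuous side with $\mathcal{R}_h\delta_t u(t_n)$ on the discrete side, so we must isolate a pure consistency term of size $\Delta t\|\partial_{tt}u\|$ while keeping the spatial part at $h^s$ without losing a factor $\Delta t^{-1/2}$. We handle this by squaring, multiplying by $\Delta t$ and summing, so that the local integrals $\Delta t^{-1}\|\cdot\|^2_{L^2(t_{n-1},t_n)}\cdot\Delta t$ add up to global $L^2(0,T)$ norms. A secondary obstacle is the nonlinear term $T_1^n$: it involves $f(\PioKk u_h^n)$ evaluated at the implicit time level, and its Lipschitz bound produces the $\|\eta_h^n\|^2$ contribution that forces the restriction $\Delta t L_f$ small in the Gr\"onwall step.
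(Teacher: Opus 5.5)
Your proposal is correct and follows essentially the same route as the paper. You test \eqref{main:fully:dis:err} with $w_h=\eta_h^n$, reuse the semi-discrete bounds for $T_1^n,T_2^n,T_3^n,T_5^n,T_6^n$, and split $T_4^n$ into a time-truncation part and a Ritz-consistency part, which are exactly the paper's $\xi^n$ and $\chi^n$; you then multiply by $\Delta t$, sum, bound $\eta_h^0$ and close with the discrete Gr\"onwall lemma. One small imprecision: \eqref{asump:Deltat} alone does not give a uniform bound $\Delta t\lambda_j\le c<1$ once the Young's-inequality contributions enter $\lambda_j$, which is why the paper simply requires $\Delta t$ to be sufficiently small.
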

\begin{proof} 		
In the forthcoming parts, we will estimates $T^n_i$, where $i=1,\ldots, 6$ in the discrete error equation~\eqref{main:fully:dis:err}. In fact, following the estimates derived in Lemma~\ref{L2:converg:semi:discre:schm}, we have
\begin{equation*}
	|T_1^n| \lesssim \big(\Vert  \eta_h^n \Vert_{0,\O} +h^{s+\ts}\vert u(t_n)\vert_{2+s,\O} + h^{s}\vert f(u(t_n))\vert_{s,h}\big)\Vert w_h\Vert_{0,\O},
\end{equation*}
\begin{equation*}
	|T_2^n| \lesssim h^{s}|f(u(t_n))|_{s,h} \Vert w_h \Vert_{0,\O}, \quad \text{and} \quad 	|T_3^n|  \lesssim h^{s}\vert \partial_t u(t_n) \vert_{1+s,\O} ~ \Vert w_h\Vert_{0,\O}.
\end{equation*}
In addition we have 
	\begin{equation*}
		|T^n_5|+ |T^n_6| \lesssim h^{s}|u(t_n)|_{2+s,\O}\Vert  w_h \Vert_{0,\O}.
\end{equation*}

Using similar arguments as in \cite{Vacca_parabolico}, we derive the following bound:
\begin{equation*}
\begin{split}	
|T_4^n| &\leq   |M(\partial_t u(t_n),w_h) - M_h(\mathcal{R}_h\delta_t u(t_n) ,w_h)| \\	
& \lesssim \big( \left\|
\Delta t\, \partial_t u(t_n)-\bigl(u(t_n) - u(t_{n-1})\bigr)\right\|_{0,\O}
+h^{s}\left\| u(t_n) - u(t_{n-1})\right\|_{2+s,\O}\big) \|w_h\|_{0,\O}\\
& =: \frac{C}{\Delta t}	\big( \xi^n +\chi^n \big) \|w_h\|_{0,\O}.
\end{split}
\end{equation*}
For convenience, we introduce the notation
\[
|||\eta_h^n|||_{0,\Omega}^2 := M_h(\eta_h^n,\eta_h^n).
\]
Taking $w_h=\eta_h^n$ in \eqref{main:fully:dis:err}, combining the above bounds, and using the identity
\[
M_{h}(\delta_t\eta_h^n,\eta_h^n)
=\frac{1}{2\Delta t}
\big(|||\eta_h^n|||_{0,\Omega}^2-|||\eta_h^{n-1}|||_{0,\Omega}^2
+|||\eta_h^n-\eta_h^{n-1}|||_{0,\Omega}^2\big),
\]
we obtain, after applying Young’s inequality,
\begin{equation*}
	\begin{split}
		\frac{1}{2 \Delta t}\Big(&|||\eta_h^n|||^2_{0,\Omega}- |||\eta_h^{n-1}|||^2_{0,\Omega}\Big)
		+ C\big(\alpha_1 |\eta_h^n|^2_{2,h} + \alpha_2 |\eta_h^n|^2_{1,h}\big) \\
		&\lesssim C|||\eta_h^n|||^2_{0,\Omega}
		+ h^{2s}\big(\|f(u(t_n))\|^2_{s,h}
		+ \|u(t_n)\|^2_{2+s,\Omega}
		+ \|\partial_t u(t_n)\|^2_{1+s,\Omega}\big) \\
		&\quad + \frac{C}{\Delta t^2}\big( (\xi^n)^2 +(\chi^n)^2 \big).
	\end{split}
\end{equation*}

Dropping the non-negative term $\alpha_2 |\eta_h^n|^2_{1,h}$, multiplying by $2\Delta t$, and summing over $j = 1, \ldots , n$, we obtain
\begin{equation*}
	\begin{split}
		|||\eta_h^n|||^2_{0,\Omega}
		+ C\Delta t \sum_{j=1}^{n}|\eta_h^j|^2_{2,h}
		&\lesssim ||| \eta_h^0 |||^2_{0,\Omega}
		+ C\Delta t \sum_{j=1}^{n}|||\eta_h^j|||^2_{0,\Omega} \\
		&\quad + \frac{C}{\Delta t} \sum_{j=1}^{n} \big( (\xi^j)^2 +(\chi^j)^2 \big)\\
		&\quad + h^{2s} \Delta t \sum_{j=1}^{n}
		\big(\|f(u(t_j))\|^2_{s,h}
		+ \|u(t_j)\|^2_{2+s,\Omega}
		+ \|\partial_t u(t_j)\|^2_{1+s,\Omega}\big).
	\end{split}
\end{equation*}
 
Using the approximation properties stated in Assumption~\ref{virtual:interp:result}, Theorem~\ref{Error:Ellip} and similar arguments of \cite{Vacca_parabolico,Zhao2019_parabolic}, we derive the following bounds:
\begin{equation*}
\begin{split}
||| \eta^0_h|||^2_{0,\O}  &\lesssim  ||| u_0-u_{h,0}|||^2_{0,\O} + |||u_0-\mathcal{R}_h u_0|||^2_{2,h} \lesssim h^{2s}|u_0|^2_{2+s,\O}\\	
\sum_{j=1}^{n} (\xi^j)^2  & \lesssim \Delta t^3 \sum_{j=1}^{n} \int_{t_{j-1}}^{t_j} \|\partial_{tt} u(s)\|^2_{0,\O} ds \lesssim 
\Delta t^3 \|\partial_{tt} u(s)\|^2_{L^2(0,t_n;L^2(\O))}\\
\sum_{j=1}^{n} (\chi^j)^2  & \lesssim \Delta t h^{2s} \sum_{j=1}^{n} \int_{t_{j-1}}^{t_j} \|\partial_{t} u(s)\|^2_{2+s,\O} ds \lesssim 
\Delta t h^{2s} \|\partial_{t} u(s)\|^2_{L^2(0,t_n;H^{1+s}(\O))}.
\end{split}	
\end{equation*}

Thus, the desired result follows from the above bounds, the equivalence of norms, and an application of the discrete Gr\"{o}nwall inequality (cf.~Lemma~\ref{discrete:gronwall}), provided that the time step is sufficiently small.

\end{proof}

We finish this section with the following remarks regarding the error estimates obtained above.
\begin{remark}\label{remark:minimal:reg}
We emphasize that in the present approach, we have derived  error equations  \eqref{main:semi:dis:err} and \eqref{main:fully:dis:err} different to the classical approaches in \cite{Vacca_parabolico,Adak2019_NMPDE,Pei2023_CMA}.
We observe that the error analysis developed in this section yields optimal error estimates under low spatial regularity of the weak solution. More precisely, for $k = 2$ the error bounds are established under minimal regularity of the continuous solution $u(\cdot, t)$, namely $u(\cdot,t) \in H^{2+s}(\Omega)$ with $s \in (0,1]$. This is a significant result, as it covers more general and practical scenarios where the continuous solution exhibits low regularity, beyond what is typically reported in the VEM literature, for instance, in domains with re-entrant angles, and in cases with mixed or non-homogeneous boundary data. Moreover, for Cahn–Hilliard BCs, the index $s$ can be smaller than $1/2$, and may even approach zero in convex domains \cite{Brenner2012_SINUM}. 
	\end{remark}
\begin{remark}
	The quasi-uniformity assumption (${\bf A8}$) is not required for the formulation of the method itself, nor for the construction of the companion and Ritz operators and the derivation of their properties. However, it is employed to derive a global inverse inequality that is essential for the energy-norm analysis. This requirement naturally stems from the global nature of the companion operator introduced in this work.
\end{remark}

\setcounter{equation}{0}
\setcounter{equation}{0}
\section{Particular choices of nonconforming  VEMs}\label{SECTION:VEM:SPACES}
In this section we will provide specific class of VEMs satisfying the Assumptions made in Section~\ref{SECTION:VEM:ABSTRACT}. In particular, we will consider the $C^0$-nonconforming \cite{Zhao2016_M3AS}, and Morley-type VE schemes of high-order $k \geq 2$ \cite{Antonietti2018_M3AS,Zhao2018_JSC} (see Example~\ref{example:dofs}). 

We start by introducing the following preliminary local VE space.
\begin{definition}[The enlarged VE space]
	For all $\E \in \O_h$ and each $k \geq 2$, the enlarged VE space  $\widetilde{V}_k^{h}(\E)$ is defined by	
	\begin{align*}
		\widetilde{V}_k^{h}(\E)
		:= \Big\{v_h\in \HdoK : \Delta^2v_h\in \P_{k}(\E),  \: \:v_h|_e\in\P_{k}(e),
		\: \: \Delta v_h|_e\in\P_{k-2}(e) \quad \forall e\in\partial\E \Big\},
	\end{align*}
	with dimension $\widetilde{d}_k :=\dim(\widetilde{V}_k^{h}(\E))= N^{\E}(2k-1)+ \dfrac{(k+1)(k+2)}{2}$.	
\end{definition} 

By employing the above enlarged VE space and the enhancement technique~\cite{Ahmad2013_CMA}, we will introduce  the $C^{0}$-nonconforming and  the Morley-type VEMs.

\subsection{The $C^{0}$-nonconforming VEM}
\begin{definition}[The local $C^{0}$-nonconforming  VE space]
	For every polygon $\E$ and any $k \geq 2$, we define the local VE space
	\begin{align*}
		V_k^{h}(\E) \equiv V^{nc,h}_{k}(\E)
		:= \Big\{v_h\in \widetilde{V}^{h}_{k}(\E): (q, v_h- \Pi^{nc,\D}_{\E} v_h)_{0,\E}=0 \quad\forall  q \in \P_{k}(\E) \setminus \P_{k-4}(\E)\Big\},
	\end{align*}
	with dimension $d^{nc}_k :=\dim(V_k^{nc,h}(\E))= N^{\E}(2k-1)+ \dfrac{(k-2)(k-3)}{2}$.
\end{definition} 

Now, we provide the  local DoFs associate with the $C^{0}$-nonconforming VE space.
\begin{definition}[The local DoFs of the $C^{0}$-nonconforming VE space]\label{def:local:dofs:C0}
	For a function $v_h \in H^2(\E)$,  the DoFs-tuple $\boldsymbol{\xi}^{nc}$ for the $C^{0}$-nonconforming  local VE space is given by 
	\begin{equation}\label{dof:tuple:nc}
		\boldsymbol{\xi}_{nc} := (0,k-2,k-2,k-4).
	\end{equation}
	Moreover,  the local DoFs $\Lambda^{nc,\E}_{\boldsymbol{\xi}}$ associated to the tuple $\boldsymbol{\xi}^{nc}$ defined in \eqref{dof:tuple:nc} are given by the following set of linear operators
	\begin{itemize}
		\item[${\bf (D^{nc}_1)}$] The values of $v_h$ at each vertex $\vb \in\VV^{\E}_h$. 
		\item[${\bf (D^{nc}_2)}$] The edge moments of $v_h$ up to order $k-2$ on each $e \in\EE^{\E}_h$,
		\begin{align*}
			h^{-1}_{e} (v_h, p)_{e} \quad \forall p \in \M_{k-2}(e).
		\end{align*} 
		\item[${\bf (D^{nc}_3)}$] The edge moments of $\partial_{\bn_e}v_h$ up to order $k-2$ on each $e \in\EE^{\E}_h$,
		\begin{align*}
			(\partial_{\bn_e} v_h, p)_{e}  \quad \forall p \in \M_{k-2}(e).
		\end{align*} 
		\item[${\bf (D^{nc}_4)}$] The internal moments of $v_h$ up to order $k-4$,
		\begin{align*}
			h^{-2}_{\E} (v_h, p)_{\E}  \quad \forall p \in \M_{k-4}(\E).
		\end{align*}
	\end{itemize}		
\end{definition}
We have the following theorem.
\begin{theorem}
	The triplet $\Big(\E,V^{nc,h}_{k}(\E),\Lambda^{nc,\E}_{\boldsymbol{\xi}}\Big)$ is a finite element in the sense of Ciarlet.
\end{theorem}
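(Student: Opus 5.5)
The statement is a unisolvency result, and I would prove it by combining a dimension count with an injectivity argument. Throughout I would use the integration-by-parts identity for the Hessian form already used in the proof of Lemma~\ref{lemma:extra:power}. The plan is first to check that the number of DoFs in Definition~\ref{def:local:dofs:C0} equals $d^{nc}_k=\dim V^{nc,h}_k(\E)$. Then I would show that any $v_h\in V^{nc,h}_k(\E)$ whose DoFs $\mathbf{(D^{nc}_1)}$--$\mathbf{(D^{nc}_4)}$ all vanish must be zero.

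\textbf{Counting.} There are $N^{\E}$ vertex values, $N^{\E}(k-1)$ moments of $v_h$ and $N^{\E}(k-1)$ moments of $\partial_{\bn_e}v_h$ on the edges, and $\frac{(k-2)(k-3)}{2}$ interior moments. This gives $N^{\E}(2k-1)+\frac{(k-2)(k-3)}{2}$, which is exactly $d^{nc}_k$.

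To justify the value of $d^{nc}_k$ itself, I would use the standard enhancement argument of \cite{Ahmad2013_CMA}. In the enlarged space $\widetilde V^h_k(\E)$, the same boundary DoFs together with all interior moments up to order $k$ are unisolvent; this follows from well-posedness of the biharmonic problem with data $\Delta^2 v\in\P_k$ and boundary traces $v|_e$, $\Delta v|_e$. The enhancement then imposes $\dim\P_k(\E)-\dim\P_{k-4}(\E)$ linear constraints, so $\dim V^{nc,h}_k(\E)\ge d^{nc}_k$. The injectivity argument below supplies the reverse inequality.

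\textbf{Injectivity.} Let $v_h\in V^{nc,h}_k(\E)$ have all DoFs equal to zero. I would proceed in four steps.
\begin{itemize}
\item[(i)] On each edge $e$, the trace $v_h|_e\in\P_k(e)$ vanishes at the two endpoints and has vanishing moments against $\P_{k-2}(e)$. These are $k+1$ independent conditions on a $(k+1)$-dimensional space, so $v_h|_{\partial\E}=0$.
\item[(ii)] I would show $\Pi^{nc,\D}_{\E}v_h=0$. For $p\in\P_k(\E)$, integrating by parts twice gives
$A^{\E}(v_h,p)=\int_{\E}\Delta^2p\,v_h$ plus edge terms pairing $\partial_{\bn}v_h$ with $\partial^2_{\bn\bn}p\in\P_{k-2}(e)$, plus edge and vertex terms involving $v_h$ only.
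The interior term vanishes because $\Delta^2p\in\P_{k-4}$ and by $\mathbf{(D^{nc}_4)}$. The normal-derivative terms vanish by $\mathbf{(D^{nc}_3)}$, and the remaining terms vanish by step (i). The normalisations $P_0(v_h)$ and $P_0(\nabla v_h)$ also vanish: the tangential derivative of $v_h$ is zero on $\partial\E$, and the normal part is a zero-order moment from $\mathbf{(D^{nc}_3)}$. Since $A^{\E}(v_h-\Pi v_h,p)=0$ for all $p$ and the projection is unique, $\Pi^{nc,\D}_{\E}v_h=0$.
\item[(iii)] The enhancement condition now gives $(q,v_h)_{\E}=0$ for all $q\in\P_k\setminus\P_{k-4}$. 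Together with $\mathbf{(D^{nc}_4)}$, this yields $v_h\perp\P_k(\E)$.
\item[(iv)] I would integrate $A^{\E}(v_h,v_h)$ by parts. The volume term $\int_{\E}\Delta^2v_h\,v_h$ vanishes by (iii), since $\Delta^2v_h\in\P_k$. The boundary terms pair $\partial_{\bn}v_h$ with $\Delta v_h-\partial^2_{\bt\bt}v_h$, which equals $\Delta v_h|_e\in\P_{k-2}(e)$ because $v_h|_e=0$; these vanish by $\mathbf{(D^{nc}_3)}$. The tangential and corner terms vanish since $v_h=0$ on $\partial\E$. Hence $|v_h|_{2,\E}=0$, so $v_h$ is affine, and since it vanishes on $\partial\E$ it is identically zero.
\end{itemize}

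The main obstacle is the careful bookkeeping in the integration-by-parts identity on a general polygon, specifically the vertex (corner) contributions and the claim that each boundary factor lies in $\P_{k-2}(e)$. I would handle this by first reducing everything with $v_h|_{\partial\E}=0$ from step (i), which kills every term containing tangential derivatives of $v_h$.
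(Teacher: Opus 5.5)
Your proposal is correct, and it is the standard argument the paper relies on: the paper's own proof only cites \cite{Zhao2016_M3AS,Ahmad2013_CMA,CMS2016,Chinosi2016_CMA}, which amounts to a dimension count through the enhancement of \cite{Ahmad2013_CMA} plus an injectivity proof by integration by parts as in \cite{Zhao2016_M3AS}. One small simplification: in step (iv) you can work with $\|\Delta v_h\|_{0,\E}^2$ instead of $|v_h|_{2,\E}^2$. Once $v_h|_{\partial\E}=0$, the only surviving boundary term is $\int_{\partial\E}\Delta v_h\,\partial_{\bn}v_h$, so the corner bookkeeping you flag disappears.
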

\begin{proof}
	The proof can be follows by using standard arguments of the VEM literature (see for instance~\cite{Zhao2016_M3AS,Ahmad2013_CMA,CMS2016,Chinosi2016_CMA}).
	
\end{proof}

\begin{definition}[The global $C^{0}$-nonconforming virtual space]
	For every decomposition $\O_h$ and $k \geq 2$, we define the global virtual element space
	\begin{equation*}
		V^{nc,h}_k  = \{ v_h\in H_k^{2,\nc}(\O_h) \cap H^1(\O): v_h|_{\E}\in V^{nc,h}_{k}(\E) \quad \forall \E \in \O_h \}.
	\end{equation*}
	The global DoFs for the space  $V^{nc,h}_k$ are the direct extension of the local DoFs in Definition~\ref{def:local:dofs:C0}.
\end{definition}

\subsection{The Morley-type virtual element method}

\begin{definition}[The local Morley-type virtual space]
	For every polygon $\E$ and any $k \geq 2$, we define the local VE space
	\begin{align*}
		V_k^{h}(\E) \equiv V^{fnc,h}_{k}(\E)
		:= \Big\{v_h\in \widetilde{V}^{h}_{k}(\E) &: (q, v_h- \Pi^{fnc,\D}_{\E} v_h)_{0,\E}=0 \quad\forall  q \in \P_{k}(\E) \setminus \P_{k-4}(\E)\\
		& \qquad (q, v_h- \Pi^{fnc,\D}_{\E} v_h)_{0,e}=0 \quad\forall  q \in \P_{k-2}(e) \setminus \P_{k-3}(e)\Big\},
	\end{align*}
with dimension $d^{fnc}_k :=\dim(V_k^{fnc,h}(\E))= 2\:N^{\E}(k-1)+ \dfrac{(k-2)(k-3)}{2}$.
\end{definition} 

Next, we provide the local  DoFs associate with the Morley-type VE space.
\begin{definition}[Local DoFs of the Morley-type VE space]\label{def:local:dofs:Morley}
	For a function $v_h \in H^2(\E)$,  the DoFs-tuple $\boldsymbol{\xi}^{fnc}$  for the Morley-type VE space is given by 
	\begin{equation}\label{dof:tuple:fnc}
		\boldsymbol{\xi}^{fnc} := (0,k-3,k-2,k-4).
	\end{equation}
	Moreover,  the DoFs $\Lambda^{fnc,\E}_{\boldsymbol{\xi}}$ associated with the tuple $\boldsymbol{\xi}^{fnc}$ defined in \eqref{dof:tuple:fnc} are given by the following set of linear operators
	\begin{itemize}
		\item[${\bf (D^{fnc}_1)}$] The values of $v_h$ at each vertex $\vb \in\VV^{\E}_h$.
		\item[${\bf (D^{fnc}_2)}$] The edge moments of $v_h$ up to order $k-3$ on each $e \in\EE^{\E}_h$,
		\begin{align*}
			h^{-1}_{e} (v_h, p)_{e} \quad \forall p \in \M_{k-3}(e).
		\end{align*} 
		\item[${\bf (D^{fnc}_3)}$] The edge moments of $\partial_{\bn_e}v_h$ up to order $k-2$ on each $e \in\EE^{\E}_h$,
		\begin{align*}
			(\partial_{\bn_e} v_h, p)_{e} \quad \forall p \in \M_{k-2}(e).
		\end{align*} 
		\item[${\bf (D^{fnc}_4)}$] The internal moments of $v_h$ up to order $k-4$,
		\begin{align*}
			h^{-2}_{\E}  (v_h, p)_{\E}  \quad \forall p \in \M_{k-4}(\E).
		\end{align*} 
	\end{itemize}		
\end{definition}

Based on the DoFs, we have the following result.
\begin{theorem}
	The triplet $\Big(\E,V^{fnc,h}_{k}(\E),\Lambda^{fnc,\E}_{\boldsymbol{\xi}}\Big)$ is a finite element in the sense of Ciarlet.
\end{theorem}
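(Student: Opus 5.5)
The statement to prove is that the Morley-type triplet $\big(\E,V^{fnc,h}_{k}(\E),\Lambda^{fnc,\E}_{\boldsymbol{\xi}}\big)$ is a finite element in the sense of Ciarlet. This means two things. First, the number of DoFs in Definition~\ref{def:local:dofs:Morley} must equal $\dim V^{fnc,h}_k(\E)$. Second, any $v_h\in V^{fnc,h}_k(\E)$ whose DoFs all vanish must be zero.

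\textbf{Step 1: counting and computability.} The count is direct: $N^{\E}$ vertex values, $N^{\E}(k-2)$ edge value moments, $N^{\E}(k-1)$ normal moments and $(k-2)(k-3)/2$ interior moments. These sum to $2N^{\E}(k-1)+(k-2)(k-3)/2=d^{fnc}_k$. The dimension itself I take from the enhancement construction on $\widetilde V^h_k(\E)$. The enhancement constraints remove exactly the $\P_k\setminus\P_{k-4}$ interior moments and the top-order ($\P_{k-2}\setminus\P_{k-3}$) edge value moments from the $\widetilde d_k$ unisolvent quantities of the enlarged space.

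Next I check that $\Pi^{fnc,\D}_{\E}v_h$ is computable from the DoFs. Integrate $A^{\E}(v_h,p_k)$ by parts twice. This gives $(v_h,\Delta^2p_k)_{\E}$, which uses only interior moments of order $\le k-4$ and so is available. It also gives boundary terms with $\partial_{\bn}v_h$ tested against degree $k-2$ polynomials, available from ${\bf (D^{fnc}_3)}$. Finally it gives $v_h$ tested against $\partial_{\bn}\Delta p_k$ (degree $k-3$) plus corner terms from vertex values. For these, ${\bf (D^{fnc}_2)}$ supplies moments up to $k-3$, and the vertex values come from ${\bf (D^{fnc}_1)}$. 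The constraints $P_0$ use $(v_h,1)_{\partial\E}$ and $(\nabla v_h,1)_{\partial \E}$, which are computable from the same data.

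\textbf{Step 2: unisolvence.} Let $v_h$ have all DoFs zero. By Step 1, $\Pi^{fnc,\D}_{\E}v_h=0$. The edge enhancement then gives $(q,v_h)_e=0$ for $q\in\P_{k-2}(e)\setminus\P_{k-3}(e)$. Together with the vanishing lower moments, this gives $(v_h,q)_e=0$ for all $q\in\P_{k-2}(e)$. Since $v_h|_e\in\P_k(e)$ also vanishes at both endpoints, $v_h|_e=b_e\,r$ with $b_e$ the edge bubble and $r\in\P_{k-2}(e)$. Testing with $q=r$ forces $r=0$, so $v_h=0$ on $\partial\E$.

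Similarly, the interior enhancement and the zero interior moments give $(v_h,q)_{\E}=0$ for all $q\in\P_k(\E)$. Now integrate $(\Delta^2 v_h, v_h)_{\E}$ by parts:
\[
|v_h|_{2,\E}^2\simeq\|\Delta v_h\|^2_{0,\E}=(\Delta^2v_h,v_h)_{\E}+(\Delta v_h,\partial_{\bn}v_h)_{\partial\E}-(\partial_{\bn}\Delta v_h,v_h)_{\partial\E}.
\]
The first term vanishes because $\Delta^2v_h\in\P_k(\E)$. The last vanishes because $v_h=0$ on $\partial\E$. The middle one vanishes because $\Delta v_h|_e\in\P_{k-2}(e)$ and the normal moments are zero. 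Here I use that $v_h\in H^2_0$-like boundary data on a simple polygon makes $\|\Delta\cdot\|$ equivalent to $|\cdot|_2$, or I integrate the Hessian form directly. Hence $v_h$ is affine, vanishes on $\partial\E$, and is therefore $0$.

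\textbf{Main obstacle.} The delicate step is the integration-by-parts computability argument. It must show that the edge moment of order exactly $k-2$ of $v_h$ is never needed to compute $\Pi^{fnc,\D}_{\E}$, so that the enhancement is consistent rather than circular. It also requires care with the corner terms and the regularity of $v_h$ near nonconvex vertices. For the latter I would cite the standard arguments of \cite{Antonietti2018_M3AS,Zhao2018_JSC}.
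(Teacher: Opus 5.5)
Your outline is the standard argument the paper defers to, and for $k\ge 3$ it is correct. Every term of the twice-integrated $A^{\E}(v_h,p_k)$, and both constraints in $P_0$, can be read off ${\bf (D^{fnc}_1)}$--${\bf (D^{fnc}_4)}$; in particular $(v_h,1)_{\partial\E}$ comes from the zeroth moments in ${\bf (D^{fnc}_2)}$. So vanishing DoFs force $\Pi^{fnc,\D}_{\E}v_h=0$, and your edge-bubble argument and the identity for $\|\Delta v_h\|_{0,\E}^2$ then finish.

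The gap is $k=2$, which the statement covers (and which is the case used in all the experiments). There ${\bf (D^{fnc}_2)}$ is empty ($\M_{-1}(e)=\{0\}$), so $(v_h,1)_{\partial\E}$ is \emph{not} computable from the DoFs. With all DoFs zero you only get $\Pi^{fnc,\D}_{\E}v_h\equiv c:=|\partial\E|^{-1}(v_h,1)_{\partial\E}$, and the edge enhancement then gives $(v_h,1)_e=c\,|e|$ rather than $0$. This is exactly the circularity you name as your main obstacle: for $k=2$ the top-order edge moment is the zeroth one, and it enters the projection through $P_0$.

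This cannot be patched within the paper's definitions. Because $P_0(v)=(v,1)_{\partial\E}$, the $N^{\E}$ edge constraints $(1,v_h-\Pi^{fnc,\D}_{\E}v_h)_{e}=0$ add up to $P_0(v_h-\Pi^{fnc,\D}_{\E}v_h)=0$, which holds for every $v_h\in H^2(\E)$. So these constraints have rank at most $N^{\E}-1$. Together with $\dim\widetilde V^h_2(\E)=3N^{\E}+6$ and the $6$ interior constraints, this gives $\dim V^{fnc,h}_2(\E)\ge 2N^{\E}+1$, one more than the $2N^{\E}$ DoFs. Hence your Step~1 claim that the enhancement removes \emph{exactly} the top-order edge moments is false for $k=2$, and the triplet as literally defined is not unisolvent there.

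The fix is to change the constant part of the projection. For example, impose $\frac{1}{N^{\E}}\sum_{\vb}(v-\Pi^{fnc,\D}_{\E}v)(\vb)=0$ over the vertices $\vb$ of $\E$, keeping $(\nabla v,1)_{\partial\E}$ for the gradient; this is the standard choice for Morley-type spaces. With that change your Steps~1--2 go through verbatim for all $k\ge 2$. In either case, obtain $\dim V^{fnc,h}_k(\E)=d^{fnc}_k$ by combining the lower bound from counting constraints on $\widetilde V^h_k(\E)$ with your injectivity argument. Do not simply assume the constraints are independent, since that independence is precisely what fails at $k=2$.
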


\begin{proof}
	The proof can be obtained from standard arguments of the VEM literature (see for instance~\cite{Antonietti2018_M3AS, Zhao2018_JSC,Ahmad2013_CMA,CMS2016,Chinosi2016_CMA}).
\end{proof}

\begin{definition}[The global Morley-type virtual space]
	For every decomposition $\O_h$ and $k \geq 2$, we define the global VE space
	\begin{equation*}
		V^{nc,h}_k  = \{ v_h\in H_k^{2,\nc}(\O_h): v_h|_{\E}\in V^{fnc,h}_{k}(\E) \quad \forall \E \in \O_h \}.
	\end{equation*}
	The global DoFs for the space  $V^{nc,h}_k$ are the direct extension of the local ones in Definition~\ref{def:local:dofs:C0}.
\end{definition}

\begin{remark}
The virtual element spaces constructed above satisfy all the abstract assumptions introduced in Sections~\ref{SECTION:VEM:ABSTRACT} and~\ref{SECTION:ERROR:UNIFIED}. Consequently, all theoretical results derived at the abstract level, including stability properties and error estimates, apply to these spaces. For related constructions and further details, we refer to \cite{Antonietti2018_M3AS,Zhao2016_M3AS,Zhao2018_JSC,CKP-SINUM2023}.
\end{remark}

\subsection{Construction of the Companion operators}\label{const:Compation-operator}
In this subsection we will provide the construction of the Companion $\Jh$ operator considered in Subsection~\ref{Comp:Ritz:opertors} and satisfying the properties of Assumption~\ref{Assump:companion}. 
The construction is based on the ideas developed in~\cite{CKP-SINUM2023,Khot2025_MathComp}, which are extended to the present setting. Indeed, we have that  in the  design are two main ingredient: first, the construction of a preliminary operator, which  map from the nonconforming VE spaces to a $C^1$-conforming counterpart VE space of one degree higher, and then by using this operator together with  standard bubble-function techniques is possible to design a new companion operator to achieve additional $L^2$-orthogonality  (cf. property (c) in Assumption~\ref{Assump:companion}).

With the aim of clarifying notation and avoid any ambiguity, we will adopt the following notation:
\begin{itemize}
	\item for the $C^0$-nonconforming space $V^{nc,h}_{k}$:
	\begin{equation*}
		\begin{split}
			&\text{The Companion operator: 	$\Jh^{nc}: V^{nc,h}_{k} \to V$;} \\
			&\text{The linear functionals associated with global DoFs:	$\dof^{k,nc}_i$ for $i \in  \{1, \ldots , d^{nc}_k\}$.} 
		\end{split}			
	\end{equation*}
	
	\item for the fully-nonconforming space $V^{fnc,h}_{k}$:
	\begin{equation*}
		\begin{split}
			&\text{The Companion operator: 	$\Jh^{fnc}: V^{fnc,h}_{k} \to  V$;} \\
			&\text{The linear functionals associated with global DoFs:	$\dof^{k,fnc}_i$ for $i \in  \{1, \ldots , d^{fnc}_k\}$.} 
		\end{split}			
	\end{equation*}	
\end{itemize}

Additionally, for sake of completeness we will recall the definition of the $C^1$-conforming counterpart  of the nonconforming VE spaces considered here. In fact, for all $\E \in \O_h$, for our purposes, we consider the VE space of order $\ell+1$ (with $\ell \geq 2$):
\begin{equation*}
	V^{c,h}_{\ell+1}  := \{ v_h\in H^{2}(\O): v_h|_{\E}\in V^{c,h}_{\ell+1}(\E) \quad \forall \E \in \O_h \} \cap V \subset V,		
\end{equation*}
where  the local space is given by
\begin{align*}
	V_{\ell+1}^{c,h}(\E)	:= \Big\{v_h\in \HdoK \cap C^1(\E) : & \: \Delta^2v_h\in \P_{\ell+1}(\E),  \: \:v_h|_e\in\P_{\ell+1}(e),
	\: \: \partial_{\bn} v_h|_e\in\P_{\widehat{\ell}}\:(e) \quad \forall e\in\partial\E,\\
	& \quad  (q, v_h- \Pi^{c,\D}_{\E} v_h)_{0,\E}=0 \quad\forall  q \in \P_{\ell+1}(\E) \setminus \P_{\ell-3}(\E) \Big\},
\end{align*}
with $\widehat{\ell}= \max \{\ell+1,3\}$ and the respective definition for the $H^2$-VEM projector $\Pi^{c,\D^2}_{\E}$ and the global DoFs, which are given:
\begin{itemize}
	\item[${\bf (D^{c}_1)}$] The values $v_h$ and the $h_{\vb}\nabla v_h$ for each vertex $\vb$ of $\E$. Here $h_{\vb}$ is some local length scale associated to the vertex $\vb$.
	\item[${\bf (D^{c}_2)}$] The moments of $v_h$ up to order $\ell-3$ on each $e \subset \partial \E$,
	\begin{align*}
		h^{-1}_{e}  (v_h, p)_{e} \quad \forall p \in \P_{\ell-3}(e).
	\end{align*} 
	\item[${\bf (D^{c}_3)}$] The moments of $\partial_{\bn_e}v_h$ up to order $\ell-2$ on each $e \subset \partial \E$,
	\begin{align*}
		(\partial_{\bn_e} v_h, p)_{e} \, \mathrm{d} s \quad \forall p \in \P_{\ell-2}(e).
	\end{align*} 
	\item[${\bf (D^{c}_4)}$] The internal moments of $v_h$ up to order $\ell-3$,
	\begin{align*}
		h^{-2}_{\E} ( v_h, p)_{\E} \quad \forall p \in \P_{\ell-3}(\E).
	\end{align*} 
\end{itemize}	
For further details the $C^1$-conforming VE scheme we refer to~\cite[Section 3.1]{Brezzi2013_CMAME} and \cite[Section 3.1]{Chinosi2016_CMA}.

Now, we continue with the construction of the Companion operators. Since this construction will be make in a unified way, we will use the superscript $\dagger \in \{fnc,nc\}$ to refer a generic definition or properties that is valid for both spaces $V^{fnc,h}_{k}$ or $V^{nc,h}_{k}$. Thus, with this notation, we start with the definition of a preliminary Companion operator $\widetilde{\mathcal{J}}^{\dagger}_h: V^{\dagger,h}_{k} \to V^{c,h}_{k+1} \subset V$, for $\dagger \in \{fnc,nc\}$. First, we note that the DoFs of $V^{nc,h}_{k}$ contains the DoFs of $V^{fnc,h}_{k}$, more precisely  $\big\{ \dof_i^{fnc,k}\big\}_{i=1}^{ d^{fnc}_k} \subset \big\{ \dof_i^{nc,k}\big\}_{i=1}^{ d^{nc}_k}$. Additionally,  we have $\big\{ \dof_i^{\dagger,k}\big\}_{i=1}^{ d^{fnc}_k} \subset \big\{ \dof_j^{c,k+1}\big\}_{j=1}^{ d^{c}_{k+1}}$, for $\dagger \in \{fnc,nc\}$. Then, based on \cite{Khot2025_MathComp}, we consider the following definition: for all $v_h \in V^{\dagger,h}_{k}$, we set
\begin{equation}\label{dof:Etilde:morley}
	\begin{split}
		\dof_i^{\dagger,k} (\widetilde{\mathcal{J}}_h^{\dagger} v_h) &:= 	\dof_i^{\dagger,k} ( v_h) \quad \forall i \in \{1, \ldots , d^{fnc}_k\};\\
		\nabla \widetilde{\mathcal{J}}_h^{\dagger} v_h(z) &:= \frac{1}{{\rm card}(\omega(z))} \sum_{\E \in \O_h}	\nabla \Pi_{\E}^{\dagger,\D} v_h(z) \quad \forall z \in \VVi;\\
		(\widetilde{\mathcal{J}}_h^{\dagger} v_h, \chi)_{\E} &:= (v_h, \chi)_{\E} \qquad \forall \chi \in \M^{*}_{k-3}(\E).
	\end{split}
\end{equation} 
where $\omega(z) := \{\E \in \O_h: z \in \E \}$  is the set that contains the neighboring polygons $\E$ sharing the vertex $z$, and ${\rm card}(\omega(z))$ denote its  cardinality.

We recall that the identity of the DoFs in the first equation of \eqref{dof:Etilde:morley} are taken for both spaces in the smallest set, i.e., for $i \in \{1, \ldots , d^{fnc}_k\}$. Thus, we deduce that for both VE nonconforming spaces, this Companion operator $\widetilde{\mathcal{J}}^{\dagger}_h: V^{\dagger,h}_{k} \to V^{c,h}_{k+1} \subset V$  is well-defined. Moreover, by following the same arguments developed in \cite[Theorem 5.1]{Khot2025_MathComp}, it is possible to prove that it satisfies the properties (a), (b) and (d) of Assumption~\ref{Assump:companion}.

The next step is to defined the Companion operator $\mathcal{J}^{\dagger}_h: V^{\dagger,h}_{k} \to  V$ satisfying also the $L^2$-orthogonality respect to $\P_k(\E)$ in property (c) of Assumption~\ref{Assump:companion}. To do that, we extend the ideas presented in \cite{Khot2025_MathComp}, i.e., from the definition of $\widetilde{\mathcal{J}}^{\dagger}_h$ in \eqref{dof:Etilde:morley}, we can consider its definition exactly as in \cite[Theorem 5.2]{Khot2025_MathComp}.

For sake of completeness we will write explicitly its construction. For each $\E \in \O_h$, let  $b_{\E} \in  H_0^2(\E)$  be a bubble-function supported in $\E$. Now, let us consider the Hilbert $\P_k(\E)$  endowed with the weighted scalar product $(b_{\E} \cdot, \cdot)_{0,\E}$. Moreover, let  $v_{\E} \in \P_k(\E)$ be the Riesz representative of the linear functional mapping from $\P_k(\E)$ to $\R$, defined by $q_k \mapsto (v_h-\Jh^{\dagger} v_h, q_k)_{0,\E}$, for all $q_k \in \P_k(\E)$. Thus, given $v_h \in V^{\dagger,h}_{k}$, the function $\widetilde{v}_h \in \P_k(\O_h)$, with $\widetilde{v}_h|_{\E}:= v_{\E}$ and the bubble-function  defined by $b_h|_{\E}:= b_{\E} \in H_0^2(\O)$, satisfy
\begin{equation*}
	(b_h \widetilde{v}_h,q_k)_{0,\O} = (v_h-\Jh^{\dagger} v_h, q_k)_{0,\O} \quad \forall q_k \in \P_k(\O_h). 
\end{equation*} 
Then, we define $\Jh^{\dagger}: V^{\dagger,h}_{k} \to  V$ by 
\begin{equation}\label{def:companion:final}
	\Jh^{\dagger} v_h :=   \widetilde{\mathcal{J}}^{\dagger}_h v_h + b_h \widetilde{v}_h \in  V.
\end{equation}

We summarize the previous discussion in the following result.

\begin{theorem}
	The Companion operator $\Jh^{\dagger}: V^{\dagger,h}_{k} \to  V$ (with  $\dagger \in \{fnc,nc\}$),  defined in  \eqref{def:companion:final} satisfies the properties (a)-(d) of Assumption~\ref{Assump:companion}.	
\end{theorem}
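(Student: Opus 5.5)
We prove the theorem in two stages, following the construction in \eqref{def:companion:final}. We take as given, by the arguments of \cite[Theorem 5.1]{Khot2025_MathComp} adapted to our DoFs, that the preliminary operator $\widetilde{\mathcal{J}}^{\dagger}_h$ in \eqref{dof:Etilde:morley} maps into $V^{c,h}_{k+1}\subset V$ and satisfies (a), (b) and (d). We then show that adding the bubble correction $b_h\widetilde v_h$ keeps these properties and additionally gives (c). One point has to be fixed first. The Riesz representative must be defined with respect to $\widetilde{\mathcal{J}}^{\dagger}_h$, i.e. $(b_h\widetilde v_h,q_k)_{0,\O}=(v_h-\widetilde{\mathcal{J}}^{\dagger}_h v_h,q_k)_{0,\O}$; otherwise the definition is circular. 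With this choice, (c) is immediate:
\[
(v_h-\Jh^{\dagger}v_h,q_k)_{0,\O}=(v_h-\widetilde{\mathcal{J}}^{\dagger}_hv_h,q_k)_{0,\O}-(b_h\widetilde v_h,q_k)_{0,\O}=0 .
\]
Since $b_\E\in H^2_0(\E)$, the product $b_h\widetilde v_h$ lies in $H^2_0(\O)\subset V$ for all three boundary conditions, so $\Jh^{\dagger}v_h\in V$.

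For (a) and (b) we exploit that the correction is a piecewise $H^2_0$ function. For $\chi_k\in\P_k(\E)$ we integrate by parts twice on each $\E$; the boundary terms vanish because $b_\E\widetilde v_h$ and its gradient vanish on $\partial\E$. This gives $A^\E(b_\E\widetilde v_h,\chi_k)=(b_\E\widetilde v_h,\Delta^2\chi_k)_\E$. Since $\Delta^2\chi_k\in\P_{k-4}(\E)\subset\P_k(\E)$, the defining relation turns this into $(v_h-\widetilde{\mathcal{J}}^{\dagger}_hv_h,\Delta^2\chi_k)_\E$. This term vanishes by the third line of \eqref{dof:Etilde:morley} together with the inner DoFs of order $k-4$, which are preserved by the first line. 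If that matching of moments is only up to $k-4$, the term is still zero because $\Delta^2\chi_k\in\P_{k-4}$. Hence (a) carries over from $\widetilde{\mathcal{J}}^{\dagger}_h$. Similarly, for $\boldsymbol q\in\PP_{k-3}(\E)$ we have $(\nabla(b_\E\widetilde v_h),\boldsymbol q)_\E=-(b_\E\widetilde v_h,\div\boldsymbol q)_\E$ with $\div\boldsymbol q\in\P_{k-4}$. This equals $-(v_h-\widetilde{\mathcal{J}}^{\dagger}_hv_h,\div\boldsymbol q)_\E=0$, so (b) is preserved.

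For (d) we estimate the bubble part locally. Norm equivalence on $\P_k(\E)$ for the weighted inner product, together with scaling and (A6)--(A7), should give $\|b_\E\widetilde v_h\|_{0,\E}\lesssim\|v_h-\widetilde{\mathcal{J}}^{\dagger}_hv_h\|_{0,\E}$. An inverse estimate for the polynomial times the fixed bubble then gives $|b_\E\widetilde v_h|_{j,\E}\lesssim h_\E^{-j}\|v_h-\widetilde{\mathcal{J}}^{\dagger}_hv_h\|_{0,\E}$. Summing over elements and using (d) for $\widetilde{\mathcal{J}}^{\dagger}_h$ yields (d) for $\Jh^{\dagger}$ by the triangle inequality. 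The main obstacle is this scaling step. On general polygons one has to choose $b_\E$ concretely, for instance as the square of a piecewise cubic bubble on a sub-triangulation or on the inscribed ball from (A6), so that $\|b_\E^{1/2}q\|_{0,\E}\approx\|q\|_{0,\E}$ uniformly and $\|b_\E\|_{W^{j,\infty}}\lesssim h_\E^{-j}$. We would follow \cite[Theorem 5.2]{Khot2025_MathComp} here and check that the constants depend only on $\rho$ and $k$.
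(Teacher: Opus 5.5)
Your proposal is correct and follows the same route as the paper, whose proof simply defers to \cite[Theorems~5.1 and~5.2]{Khot2025_MathComp}: you take (a), (b), (d) for $\widetilde{\mathcal{J}}^{\dagger}_h$ from the first of these results and then verify explicitly that the bubble correction preserves (a)--(b) and enforces (c). You also correctly read the Riesz representative as defined through $v_h-\widetilde{\mathcal{J}}^{\dagger}_h v_h$; the paper's $v_h-\Jh^{\dagger}v_h$ is circular as written.

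The one step to tighten is (d). The inverse estimate for $b_{\E}\widetilde{v}_h$ yields factors $h_{\E}^{-j}$, not $h^{-j}$. You therefore need (d) for $\widetilde{\mathcal{J}}^{\dagger}_h$ in its element-wise $h_{\E}$-weighted form, which implies the global-$h$ form in Assumption~\ref{Assump:companion} because $h_{\E}\le h$. The alternative is to invoke the quasi-uniformity (A8), which the paper does not assume at this stage.
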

\begin{proof}
	By construction the proof is an extension of the arguments presented in~\cite[Theorems 5.1 and 5.2]{Khot2025_MathComp}.
	
\end{proof}

We finish this subsection with three remarks regarding the construction of the Companion operators.
\begin{remark}
	We recall that, in the construction of the operator $\widetilde{\mathcal{J}}^{nc}_h$ for the $C^0$-VE space $V^{nc,h}_{k}$, it is not required that all the moments (i.e., up to $k-2$) associated to the $ v_h$ and $ \widetilde{\mathcal{J}}_hv_h$  in the DoFs ${\bf (D^{nc}_2)}$ have to be identical. Instead, it suffices for these moments to agree for all polynomials of degree up to $k - 3$ (for this reason we consider $i \in \{1, \ldots , d^{fnc}_k\}$ instead of $i \in \{1, \ldots , d^{nc}_k\}$ in the first condition of \eqref{dof:Etilde:morley}). This fact allows us to follow a similar construction as in~\cite{Khot2025_MathComp} and ensures the validity of properties $(a)-(d)$ of  Assumption~\ref{Assump:companion}. 
\end{remark}		

\begin{remark}
	We note that the Companion operator introduced in~\cite{Khot2025_MathComp} was originally developed for the Morley-type VEM. However, in this work, we adapt these ideas to extend its construction to the $C^0$-nonconforming VE space and different boundary conditions. In this way, we provide here a unified approach for both families of nonconforming VEMs. Moreover, we observe that it is valid for any nonconforming VE scheme that employed the same DoFs. 
\end{remark}
\setcounter{equation}{0}
\setcounter{equation}{0}
\section{Applications}\label{SECTION:APPLICATIONS}

In this section we illustrate the scope of the proposed framework through several classes of fourth-order time-evolution problems arising in different application areas. Rather than focusing on individual models, we emphasize how a unified variational structure encompasses reaction--diffusion systems, phase-field dynamics, and formulations of incompressible flows based on stream functions. This highlights the flexibility of the analysis and its relevance across distinct scientific contexts.

\subsection{The extended Fisher--Kolmogorov equation}

A prototypical example within this class is the extended Fisher--Kolmogorov equation, which appears in models of pattern formation, population dynamics, and biological transport phenomena \cite{Danumjaya-Pani_2006,Danumjaya2021_CMA,Pei2023_CMA,al2024finite,DNR2025_JSC}. The equation combines fourth-order diffusion with lower-order reaction mechanisms, leading to rich dynamical behavior.

Let $\alpha_1,\alpha_2>0$ and $f(u)=u-u^3$. The problem reads: find $u(\bx,t)$ such that
\begin{equation}\label{EFK-eq}
\left\{
\begin{alignedat}{2}
	\partial_t u + \alpha_1 \Delta^2 u - \alpha_2 \Delta u &= f(u)
	&\qquad &\text{in } \Omega\times(0,T],\\
	u(\bx,0) &= u_0(\bx)
	&\qquad &\text{in } \Omega.
\end{alignedat}
\right.
\end{equation}
Typical boundary conditions include the clamped plate  conditions $\CP$
\[
u=\partial_{\bn}u=0 \quad \text{on } \Gamma\times(0,T],
\]
or the Navier conditions $\SSP$
\[
u=\Delta u=0 \quad \text{on } \Gamma\times(0,T].
\]

\subsection{The semilinear system with Cahn--Hilliard-type BCs}

When the same fourth-order operator is complemented with Cahn--Hilliard-type BCs $\CH$ (cf. \eqref{Cahn-Hilliard:BCs}),
\[
\partial_{\bn}u=\partial_{\bn}\Delta u=0 \quad \text{on } \Gamma\times(0,T],
\]
the resulting problem is closely related to models arising in phase separation processes in binary mixtures and polymeric materials \cite{Gudi-Gupta_camwa2013,Brenner2012_SINUM,Antonietti2016_SINUM}. In this context, the fourth-order diffusion reflects mass-conserving mechanisms, while the nonlinear reaction terms represent thermodynamic driving forces.

Both reaction--diffusion and phase-field systems fit naturally within the same abstract formulation. More precisely, recalling that $V$ denotes the energy space associated with the prescribed boundary conditions (cf.~\eqref{cont:space:V}), the weak formulation reads: seek
\[
u\in L^2(0,T;V),
\]
such that for a.e. $t\in(0,T]$
\begin{equation*}
\left\{
\begin{aligned}
M(\partial_t u,v)+\alpha_1 A(u,v)+\alpha_2 B(u,v) &= F(u;v)  \qquad \forall v\in V,\\
u(0)&=u_0.
\end{aligned}
\right.
\end{equation*}

\subsection{Fluid flow problems in stream function formulation}
In $2$D incompressible fluid flow, the velocity field \(\boldsymbol{u} = (u_1, u_2)\) can expressed using the stream function \(\psi(\bx, t)\), where:
\begin{equation}\label{stream}
	u_1 = \frac{\partial \psi}{\partial y}, \quad u_2 = -\frac{\partial \psi}{\partial x}.
\end{equation}

We observe that one of most important features of this formulation is that  the incompressibility condition \(\div \boldsymbol{u} = 0\) is automatically satisfied by definition~\cite{GR}.

By exploiting the identity \eqref{stream}, the Stokes system in the pure stream-function formulation reduces to the following fourth-order evolution equation for $\psi$
\[
-\partial_t \Delta \psi + \nu \Delta^2 \psi =  \widetilde{f} \quad \text{in } \Omega \times (0, T],
\]
subject to the BCs of the first kind (cf.~\eqref{Campled:BCs}) and the initial conditions
\[
\begin{aligned}
	\psi = \dn \psi &= 0 \quad \quad\quad \text{on } \Gamma \times (0, T], \\
	\psi(\bx, 0) &= \psi_0(\bx) \quad \text{in } \Omega,
\end{aligned}
\]
Here, $\nu>0$ denotes the kinematic viscosity, and $\widetilde{f}:=\rot \boldsymbol{f}$, where $\boldsymbol{f}$ is a sufficiently regular external force. We note that, in this formulation, the right-hand side $\widetilde{f}$ is independent of the unknown. For further details, we refer to~\cite{GR}.

We have that a variational formulation of this problem read as: find $\psi \in L^2(0,T;V)$, such that 
\begin{equation*}
\left\{
\begin{aligned}
		B(\partial_t \psi,v)+ \nu A(\psi,v)&=(\widetilde{f},v)_{\O} \qquad \forall v \in V,\\
		\psi(0) &= \psi_0.	
\end{aligned}
\right.
\end{equation*}

The analysis developed in this work can be applied to this system, with appropriate treatment for the time derivative now in the  bilinear form $B$ and the lineal functional $\widetilde{f}$.  

\setcounter{equation}{0}
\setcounter{equation}{0}
\section{Numerical Experiments}\label{SECTION:NUMERICAL:EXPERIMENTS}
In this section, we present three numerical experiments to validate the theoretical results and assess the practical performance of the proposed VEMs.  The results demonstrate the effectiveness of the method in approximating solutions of the extended Fisher--Kolmogorov equation under clamped and Navier boundary conditions, as well as of the semilinear problem with Cahn--Hilliard-type boundary conditions.

Moreover, the third example is specifically designed to assess the accuracy of the method by considering an analytical solution with lower regularity on a non-convex $\Gamma$-shaped domain. This choice allows us to further justify our new findings in Theorems~\ref{converg:semi:discre:schm} and \ref{converg:fully:discre:schm}.
\subsection{Some aspects of the numerical implementation}

\paragraph{General setting and nonlinear solver.}
All numerical experiments are performed in \textsc{Matlab} using the lowest-order fully nonconforming Morley-type virtual element method ($k=2$).  Moreover, we consider four families of polygonal meshes:
\begin{itemize}
	\item $\O^1_h$: triangular meshes;
	\item $\O^2_h$: distorted quadrilateral meshes;
	\item $\O^3_h$: concave quadrilateral meshes;
	\item $\O^4_h$: centroidal Voronoi tessellations.
\end{itemize}

\begin{figure}[H]
	\begin{center} \includegraphics[height=4.2cm, width=4.2cm]{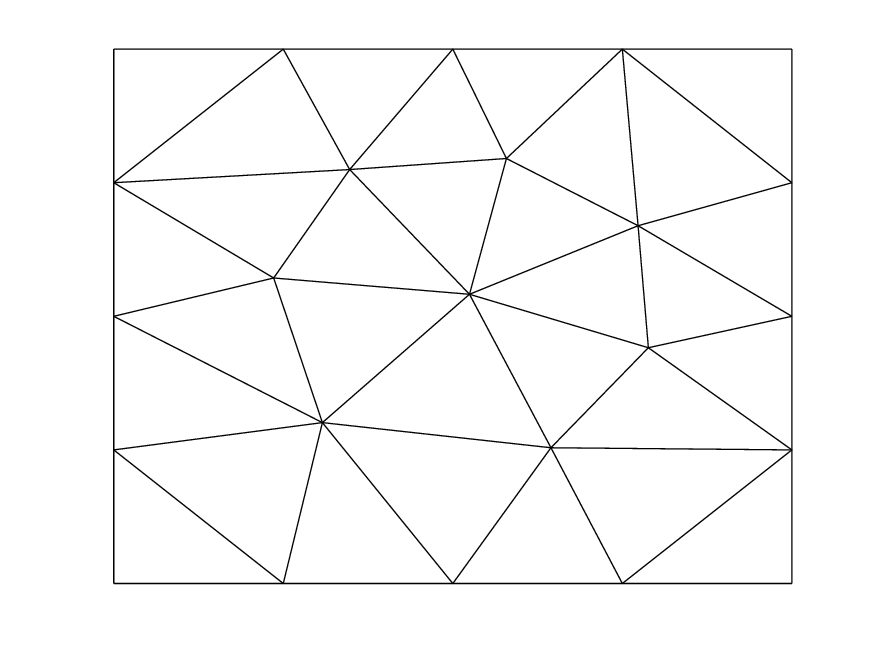}\hspace*{-0.40cm} \includegraphics[height=4.2cm, width=4.2cm]{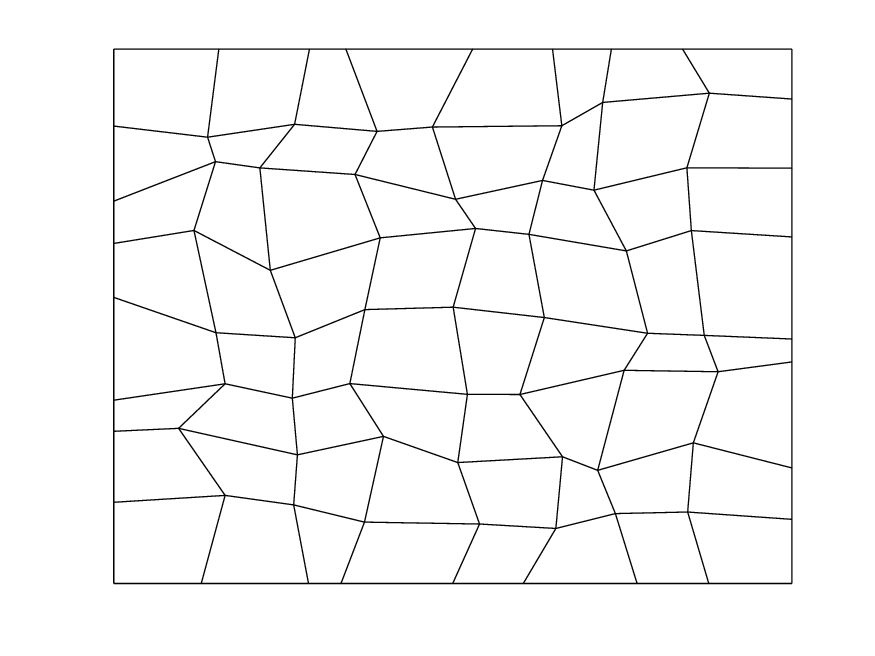}\hspace*{-0.40cm} \includegraphics[height=4.2cm, width=4.2cm]{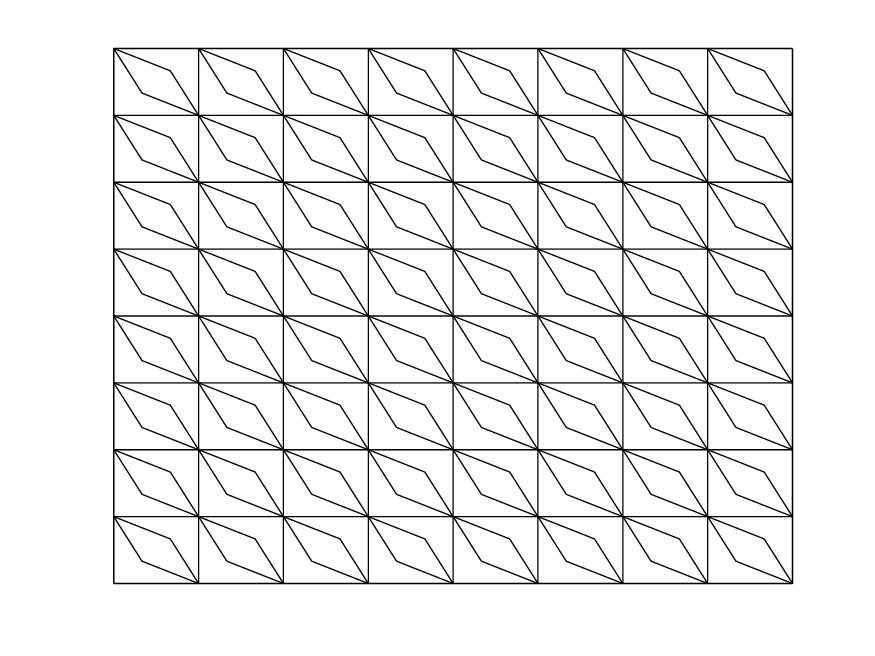}\hspace*{-0.40cm} \includegraphics[height=4.2cm, width=4.2cm]{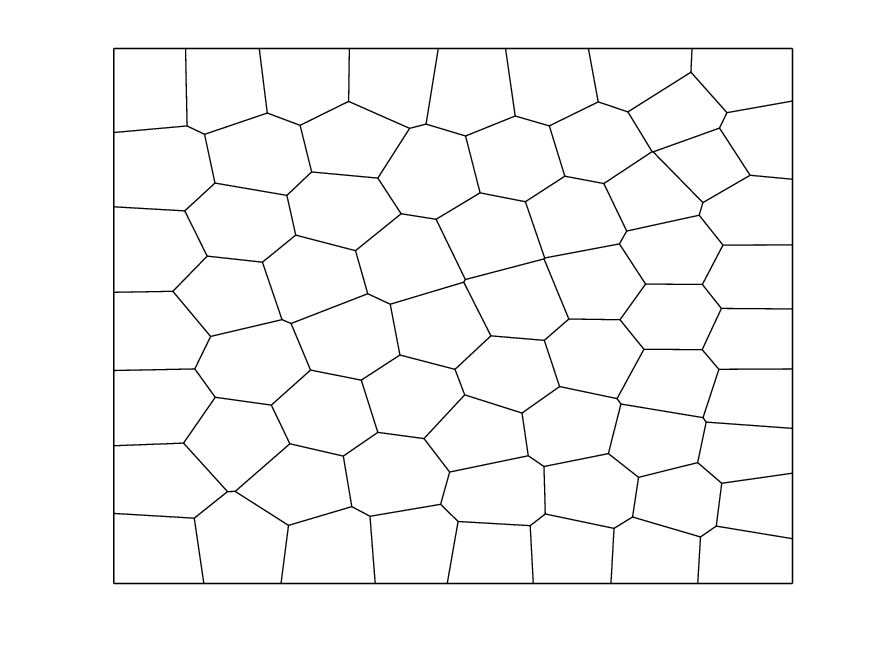}\hspace*{-0.40cm} 
	\end{center} \caption{Example of the fourth families of meshes used: $\O^1_h$, $\O^2_h$, $\O^3_h$ and $\O^4_h$.} 
\end{figure}

Since the virtual element solution is not explicitly available inside each element, we employ the reconstructed function
\[
\widetilde{u}_h^n := \Pi_{h}^{k,\nabla^2} u_h^n .
\]
The error is measured in the broken $\ell^2(H^2)$-norm (cf.~\eqref{def:norm:time}) as
\begin{equation}\label{error:quant}
	e_{2}(u):=\| u - \widetilde{u}_h \|_{\ell^{2}(0,T;H^2(\O_h))}.
\end{equation}

At each time step, the nonlinear fully discrete scheme~\eqref{fully:dis:schm} is solved by Newton’s method. 
For the initial step we set $u_h^{\mathrm{in}}=0$, while for $n\ge1$ we use $u_h^{\mathrm{in}}=u_h^{n-1}$. 
The iterations are stopped when the $\ell^\infty$-norm of the global increment falls below ${\rm Tol}=10^{-8}$.

Following~\eqref{mat:semidiscrete}, the nonlinear residual reads: find ${\bf U}^n\in\mathbb{R}^{ndof}$ such that
\begin{equation*}
	(\mathbf{M}+\mathbf{A}+\mathbf{B}){\bf U}^n
	-\Delta t\,\mathbf{F}({\bf U}^n)
	-\Delta t\,\mathbf{M}{\bf U}^{n-1}=0,
\end{equation*}
where the matrices and nonlinear vector are defined in~\eqref{mat:semidiscrete}.  

The Jacobian matrix $JC=(jc_{ij})$ is given by
\begin{equation*}
	jc_{ij}
	=
	M_h(\phi_i,\phi_j)+A_h(\phi_i,\phi_j)+B_h(\phi_i,\phi_j)
	-\Delta t\,\frac{\partial f_h({\bf U}^n)}{\partial {\bf U}^n}
	(\Pi^k \phi_i,\Pi^k \phi_j)_{\O},
\end{equation*}
with the bilinear forms introduced in Section~\ref{polynomialproy}.

\paragraph{Treatment of the different boundary conditions.}
A key feature of the present approach is its unified and versatile nature. The local virtual element construction, together with the associated element matrices, is completely independent of the boundary conditions under consideration. Differences between the ${\bf CP}$, ${\bf NC}$, and ${\bf CH}$ cases arise solely at the global level, through the enforcement of boundary degrees of freedom and the corresponding constraints (see Remark~\ref{remark:Dofs:BCs}). Consequently, switching between boundary conditions does not require any modification of the local formulation or element routines, resulting in a flexible and unified implementation.

\subsection{Test 1. The Extended Fisher--Kolmogorov equation with different BCs}		
In this numerical test, we consider the Extended Fisher--Kolmogorov equation~\eqref{EFK-eq}, with the $\CP$ and $\SSP$ boundary conditions~\eqref{Campled:BCs} and~\eqref{supported:BCs} on the square domain $\Omega := (0,1)^{2}$. We take the load term, boundary and initial conditions in such a way that the analytical solution is given by
\begin{equation*}
	u(\bx,t)=\sin(t)\: x^6 y^6(1-x)^6(1-y)^6. 
\end{equation*}
Moreover, we consider the time interval $I=(0,1]$, and  $\alpha_1 =\alpha_2 =1$.

Tables~\ref{table1-1} and~\ref{table1-2} report the errors $e_2(u)$ obtained for the model problem~\eqref{EFK-eq} under the $\CP$ and $\SSP$ boundary conditions, respectively. For the $\CP$ case, computations are carried out on the mesh families $\O_h^1$ and $\O_h^2$, whereas for the $\SSP$ boundary conditions the meshes $\O_h^1$ and $\O_h^3$ are employed. These results illustrate the performance of the numerical scheme under different mesh configurations and boundary treatments.

To verify the convergence rate predicted by Theorem~\ref{converg:fully:discre:schm}, a simultaneous refinement in space and time is performed. Starting from $h_0=\Delta t_0=1/4$, both the mesh size and time step are successively halved at each refinement level.

As observed along the main diagonal of Tables~\ref{table1-1} and~\ref{table1-2}, the error $e_2(u)$ exhibits a linear decay consistent with the combined convergence order $\mathcal O(h+\Delta t)$, thereby confirming the theoretical estimate for $k=2$.
Moreover, for sufficiently small time steps, the error is dominated by the spatial discretization. In particular, the values reported in Tables~\ref{table1-1} and~\ref{table1-2} show that $e_2(u)$ remains essentially insensitive to further reductions in $\Delta t$ when $h$ is fixed, even for relatively coarse meshes.

\begin{table}[H] 
	\begin{center}
		{\small \begin{tabular}{rrrccccccccc}
				\hline
				\hline\noalign{\smallskip}
				&	&dof &$h$ &$\Delta t_0$ & $\Delta t_0/2$ & $\Delta t_0/4$& $\Delta t_0/8$ &$\Delta t_0/16 $ \\
				\hline  
				\hline
				
				& 	     &45    &$h_0$   &$\fbox{7.716336e-7}$ &7.194463e-7&6.929311e-7 &6.795427e-7 &$\fbox{6.728136e-7}$ \\
				& 	     &233   &$h_0/2$ & 4.932702e-7         &$\fbox{4.599104e-7}$    &4.344284e-7 &4.429609e-7 &$\fbox{4.301467e-7}$\\
				&$\O^1_h$&1005  &$h_0/4$ & 2.716347e-7         &2.532643e-7&$\fbox{2.439306e-7}$ &2.392870e-7 &$\fbox{2.368963e-7}$\\
				&		 &4197  &$h_0/8$ & 1.354426e-7         &1.262827e-6&1.216287e-6 &$\fbox{1.192804e-7}$ &$\fbox{1.181197e-7}$\\
				&		 &16953 &$h_0/8$ & 6.944831e-8         &6.475160e-8&6.236529e-8 &6.116119e-7 &$\fbox{6.055620e-8}$\\
				\\
				&	      &33   &$h_0$   &$\fbox{6.350002e-6}$ &5.920526e-7&5.702613e-7 &5.592401e-7 &$\fbox{5.537154e-7}$ \\
				&		  &161  &$h_0/2$ &3.414177e-7 &$\fbox{3.183275e-7}$&3.065958e-7 &3.007228e-7 &$\fbox{2.977844e-7}$\\
				&$\O^2_h$ &705  &$h_0/4$ &1.722849e-7 &1.606333e-7&$\fbox{1.547134e-7}$ &1.518427e-7 &$\fbox{1.502989e-7}$\\
				&		  &2945 &$h_0/8$ &7.873970e-8 &7.341457e-8&7.070898e-8 &$\fbox{6.934377e-8}$ &$\fbox{6.869182e-8}$\\
				&		  &12033&$h_0/16$&3.836253e-8 &3.576814e-8&3.444997e-8 &3.378485e-8 & $\fbox{3.342653e-8}$\\   		
				\hline 				
				\hline
		\end{tabular}}
	\end{center}
	\caption{Test~1. Errors in the discrete $e_2(u)$ norm obtained with $k=2$, $\alpha_1=\alpha_2=1$ and $\CP$ BCs.}
	\label{table1-1}
\end{table}

\begin{table}[H] 
	\begin{center}
		{\small \begin{tabular}{crrrccccccccc}
				\hline
				\hline\noalign{\smallskip}
				&	&dof &$h$ &$\Delta t_0$ & $\Delta t_0/2$ & $\Delta t_0/4$& $\Delta t_0/8$ &$\Delta t_0/16 $ \\
				\hline  
				\hline
				& 	     &61    &$h_0$   &$\fbox{7.756268e-7}$ &7.231695e-7&6.965170e-7 &6.830591e-7 &$\fbox{6.762949e-7}$ \\
				& 	     &261   &$h_0/2$ &4.933406e-7 &$\fbox{4.599760e-7}$&4.430242e-7 &4.344904e-7 &$\fbox{4.302081e-7}$\\
				&$\O^1_h$&1057  &$h_0/4$ &2.716450e-7 &2.532738e-7&$\fbox{2.439398e-7}$ &2.392961e-7 &$\fbox{2.369053e-7}$\\
				&		 &4297  &$h_0/8$ &1.354432e-7 &1.262833e-7&1.216293e-7          &$\fbox{1.192810e-7}$ &$\fbox{1.181203e-7}$\\
				&	   	 &17153 &$h_0/16$&6.944873e-8 &6.475204e-8&6.236574e-8          &6.116164e-8 & $\fbox{6.055666e-8}$\\   		
				\\			
				&	      &37   &$h_0$   &$\fbox{6.788983e-7}$ &6.329825e-7&6.096485e-7 &5.978788e-7 &$\fbox{5.919611e-7}$ \\
				&      	  &145  &$h_0/2$ &5.608125e-7 &$\fbox{5.228843e-7}$&5.036138e-7 &4.939017e-7 &$\fbox{4.890251e-7}$\\
				& $\O^3_h$&577  &$h_0/4$ &3.059007e-7 &2.852125e-7&$\fbox{2.747013e-7}$ &2.694531e-7 &$\fbox{2.667678e-7}$\\
				&		  &2305 &$h_0/8$ &1.568858e-7 &1.462757e-7&1.408848e-7 &$\fbox{1.381647e-7}$ &$\fbox{1.368144e-7}$\\
				&		  &9217&$h_0/16$ &7.888526e-8 &7.355031e-8&7.083973e-8 &6.947201e-8 &        $\fbox{6.878481e-8}$\\   		
				\hline  
				\hline
		\end{tabular}}
	\end{center}
	\caption{Test~1. Errors in the discrete $e_2(u)$ norm obtained with $k=2$, $\alpha_1=\alpha_2=1$ and $\SSP$ BCs.}
	\label{table1-2}
\end{table}		

Figure~\ref{fig:snapshops:Test1} displays snapshots of the exact and numerical solutions at the time instants $t_n = 0.25$, $0.5$, and $1$ for the Extended Fisher–Kolmogorov equation with $\CP$ boundary conditions. The results are obtained using the fully-discrete VE scheme~\eqref{fully:dis:schm} with $\alpha_1=\alpha_2=1$, on the mesh $\O_h^{2}$ with mesh size $h=1/32$ and time step $\Delta t = 10^{-2}$.
\begin{figure}[h!]
	\centering
\subfloat[$u$ at $t_n=0.25$]{\includegraphics[width=0.3\linewidth, height=0.25\textwidth]{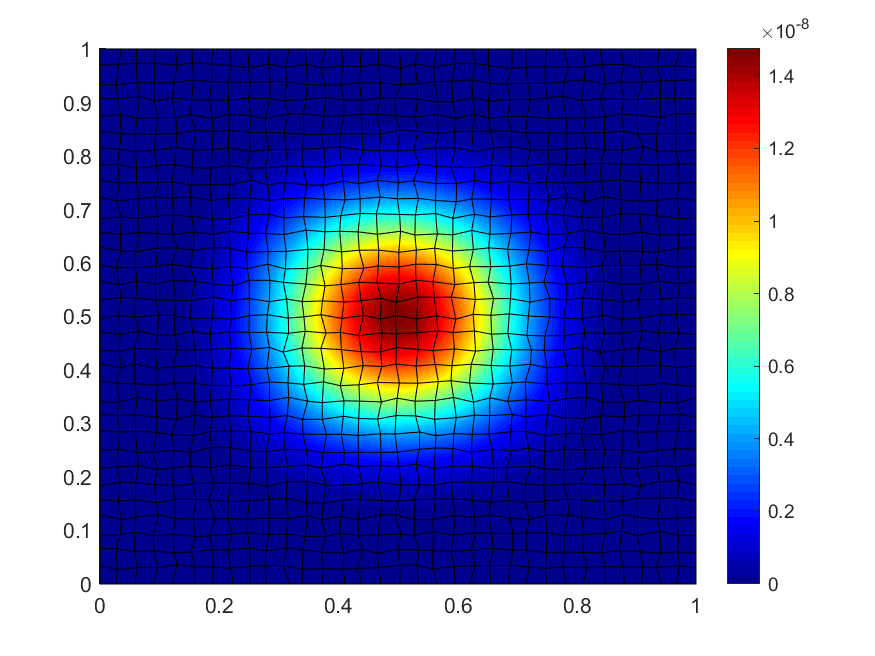}}\hfill
\subfloat[$u$ at $t_n=0.5$]{\includegraphics[width=0.3\linewidth, height=0.25\textwidth]{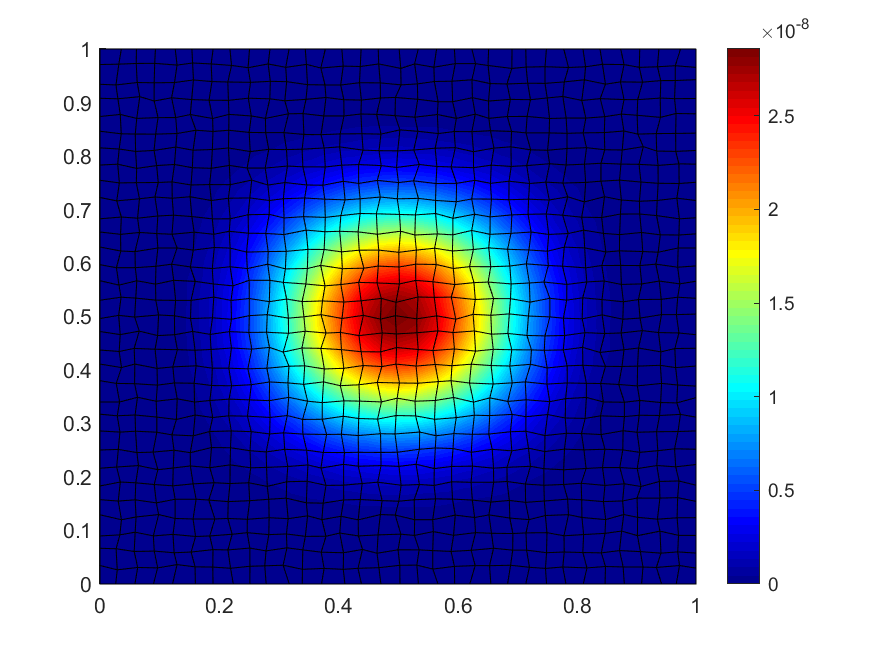}}\hfill
\subfloat[$u$ at $t_n=1$]{\includegraphics[width=0.3\linewidth, height=0.25\textwidth]{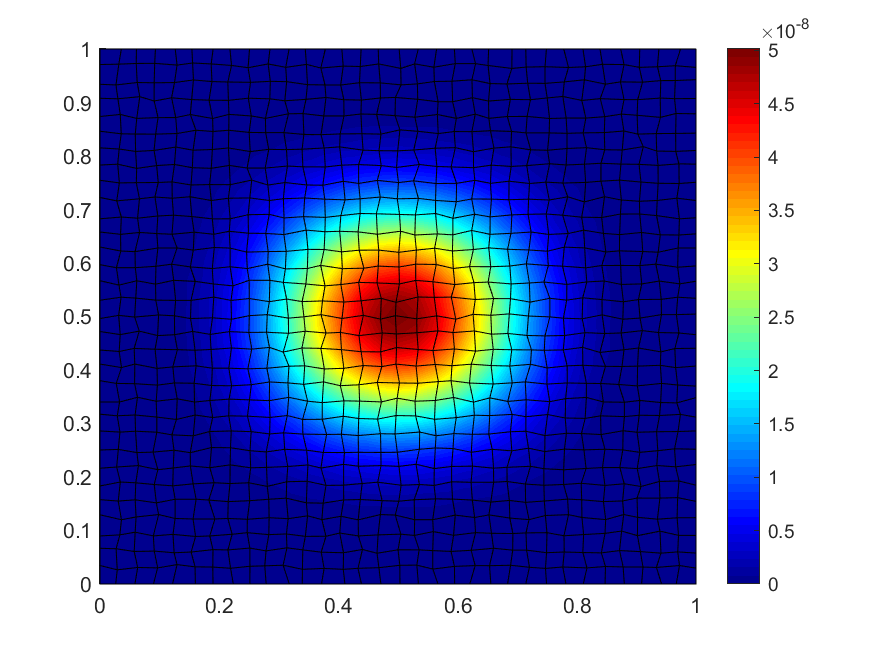}}\\	
\subfloat[$u^n_h$ at $t_n=0.25$]{\includegraphics[width=0.3\linewidth,height=0.25\textwidth]{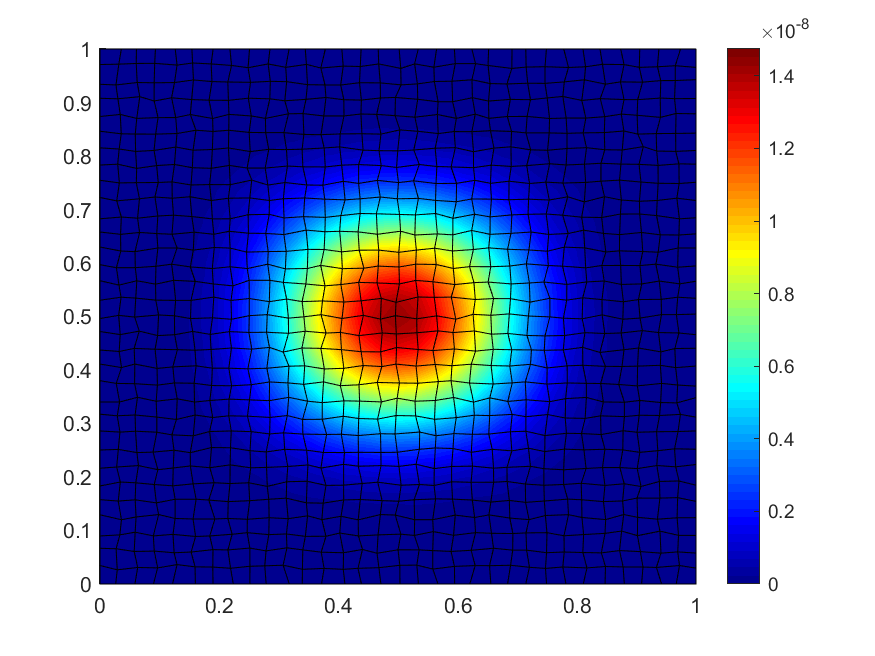}}\hfill
\subfloat[$u^n_h$ at $t_n=0.5$]{\includegraphics[width=0.3\linewidth,height=0.25\textwidth]{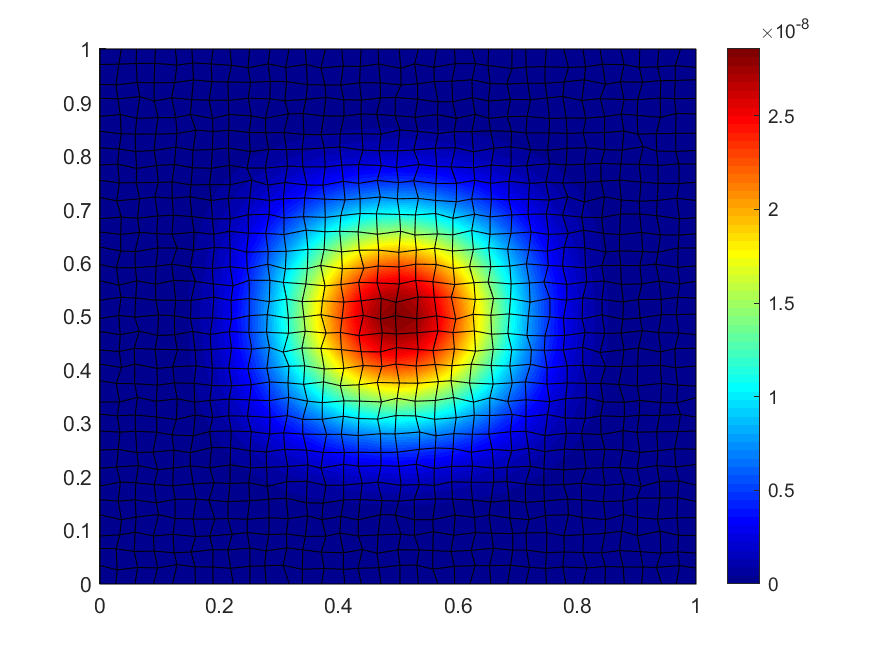}}\hfill
	\subfloat[$u^n_h$ at $t_n=1$]{\includegraphics[width=0.3\linewidth, height=0.25\textwidth]{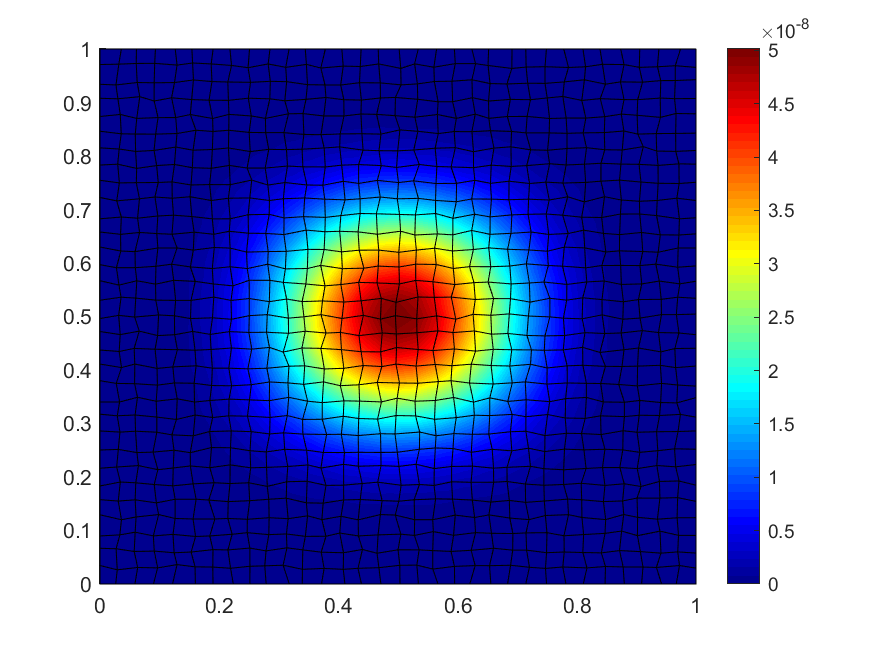}}
	\caption{Test 1. Snapshots of the exact and numerical solutions at different time instants for the Extended Fisher–Kolmogorov equation with $\CP$ BCs, using the fully-discrete VE scheme~\eqref{fully:dis:schm}, $\alpha_1=\alpha_2=1$, and mesh $\O^1_h$, with $h=1/32$ and  $\Delta=10^{-2}$.}
	\label{fig:snapshops:Test1}
\end{figure}

\subsection{Test 2. The semilinear system with Cahn--Hilliard BCs}
In this second experiment, we consider the model problem \eqref{EFK-eq} on the square domain 
$\Omega := (0,1)^{2}$ with time interval $I=(0,2]$. We take the load term and the initial conditions  in such a way that the analytical solution is given  as in Test~1.

In Table~\ref{table2-1}, we report the errors $e_2(u)$ obtained with the scheme~\eqref{fully:dis:schm} and using the polygonal meshes $\O^2_h$ and $\O^4_h$. For this boundary condition, the results also exhibit a consistent behavior across both meshes, confirming the expected linear convergence predicted by Theorem~\ref{converg:fully:discre:schm}, which highlights the accuracy of our method. 
We apply a simultaneous refinement in both time and space variables. For each mesh family, we begin with an initial mesh size and time step of $h_0=\Delta t_0 =1/4$. Then, at each iteration, we refine the discretization by reducing both the spatial and temporal step sizes by half.

As observed along the diagonal of Table~\ref{table2-1}, the error $e_{2}(u)$ decreases linearly under simultaneous refinement of the spatial mesh size $h$ and the time step $\Delta t$. This behavior is consistent with the predicted convergence rate of order $\mathcal O(h+\Delta t)$ for $k=2$, thereby confirming the theoretical estimate of Theorem~\ref{converg:fully:discre:schm}.
\begin{table}[H] 
	\begin{center}
		{\small \begin{tabular}{crrrccccccccc}
				\hline
				\hline\noalign{\smallskip}
				&	&dof &$h$ &$\Delta t_0$ & $\Delta t_0/2$ & $\Delta t_0/4$& $\Delta t_0/8$ &$\Delta t_0/16 $ \\
				\hline  
				\hline
				&	      &49   &$h_0$   &$\fbox{1.249185e-6}$ &1.206800e-6&1.206800e-6 &1.183120e-6 &$\fbox{1.169990e-6}$ \\
				&      	  &193  &$h_0/2$ &6.637615e-7 &$\fbox{6.413787e-7}$&6.290343e-7 &6.224309e-7 &$\fbox{6.188062e-7}$\\
				& $\O^2_h$&769  &$h_0/4$ &3.346403e-7 &3.233576e-7&$\fbox{3.171396e-7}$ &3.138301e-7 &$\fbox{3.121217e-7}$\\
				&		  &3073 &$h_0/8$ &1.529060e-7 &1.477540e-7&1.449247e-7 &$\fbox{1.434575e-7}$ &$\fbox{1.428226e-7}$\\
				&	      &12289&$h_0/16$&7.449027e-8 &7.198374e-8&7.062268e-8 &6.998415e-8          &$\fbox{6.898833e-8}$\\   		
				\\			
				& 	     &67    &$h_0$   &$\fbox{1.061331e-6}$ &1.025607e-6&1.005996e-6         &9.957346e-7         &$\fbox{9.903387e-7}$ \\
				& 	     &291   &$h_0/2$ &5.630875e-7 &$\fbox{5.441219e-7}$&5.336944e-7         &5.281830e-7         &$\fbox{5.253043e-7}$\\
				&$\O^4_h$&1226  &$h_0/4$ &2.749379e-7 &2.656807e-7         &$\fbox{2.605982e-7}$&2.579360e-7         &$\fbox{2.566174e-7}$\\
				&		 &5005  &$h_0/8$ &1.426001e-7 &1.378002e-7         &1.351724e-7         &$\fbox{1.338292e-7}$&$\fbox{1.333015e-7}$\\
				&	   	 &20247 &$h_0/16$&7.051774e-8 &6.811375e-8         &6.681314e-8         &6.521678e-8     &$\fbox{6.427102e-8}$\\  
				\hline  
				\hline
		\end{tabular}}
	\end{center}
	\caption{Test~2. Errors in the discrete $e_2(u)$ norm obtained with $k=2$, $\alpha_1=\alpha_2=1$ and $\CH$ BCs.}
	\label{table2-1}
\end{table}

\subsection{Test 3: Non-smooth solution on an $\Gamma$-shaped domain}
In this numerical test, we are interested in examining the accuracy of the discrete scheme \eqref{fully:dis:schm} with an exact solution having less regularity (in space) on a nonconvex $\Gamma$-shaped domain to justify our new finding in
Section~\ref{SECTION:ERROR:UNIFIED} for small values $\alpha$, i,e. $\alpha \in (0,1/2)$. 

We consider the time-dependent fourth order problem~\eqref{EFK-eq} with $\alpha_1=1$, $\alpha_2=0$ and with the $\CP$  BCs \eqref{Campled:BCs}  on the  $\Gamma$-shaped domain $\Omega_{\Gamma} :=(-1,1)^2 \setminus \left([0,1) \times (-1,0]\right)$, on the time interval $I=(0,1]$. We consider the load term $g$, boundary and initial conditions in such a way that the analytical solution is given by
\begin{equation*}
	u(\bx,t)=\sin(t)\:  r^{4/3}\sin\left(\frac{4\theta}{3}\right),
\end{equation*}
where $(r, \theta)$ are the polar coordinates.	We observe that this solution presents
a singularity at the re-entrant corner of $\Omega_{\Gamma}$ (the origin $(0,0)$); we have $u(\bx,t)  \in H^{7/3 - \varepsilon}(\Omega )$ for all  $\varepsilon >0$, and for all $t \in I$. 	

In order to see the sub-linear trend of the error~$e_2(u)$ \eqref{error:quant}, predicted by Theorem~\ref{converg:fully:discre:schm}, we fixed the time step $\Delta t =0.01$ and refine in space. In Table~\ref{table3-1}, we report the results obtained by using a mesh with distorted squares elements (as in $\Omega^2_h$) for $k=2$, with mesh refinements $h = 1/4, 1/8, 1/16, 1/32, 1/64$. According to the regularity of solution $u$, we expect an order of convergence in the energy norm  as
$\mathcal{O}(h^{1/3})$  which is predicted by Theorem~\ref{converg:fully:discre:schm}.
\begin{table}[h!]
	\setlength{\tabcolsep}{5.2pt}
	\begin{center}
		{\begin{tabular}{crrccc}
				\hline
				&$h$ &dofs &${e}_2(u)$ &${\tt r}_2(u)$ \\
				\hline  
				\hline
				&1/2   &21   &7.0023e-3 &--- \\
				&1/4   &113  &5.4869e-3 &0.35\\
				$\Omega^2_{h}$	&1/8   &513  &4.2896e-3 &0.35\\
				&1/16  &2177 &3.3852e-3 &0.34\\
				&1/32  &8961 &2.6822e-3 &0.33\\ 
				&1/64  &36353&2.1279e-3 &0.33 \\  
				\hline			
		\end{tabular}}
		\caption{Test~3. Errors in the discrete $e_2(u)$ norm obtained with $k=2$, $\alpha_1=1$ and $\alpha_2=0$ and $\CP$ BCs on the $\Gamma$-shaped domain.}
		\label{table3-1}
	\end{center}
\end{table}
\setcounter{equation}{0}
\setcounter{equation}{0}	
\section{Concluding remarks}\label{SECTION:CONCLUSIONS}
In this work, we have developed a novel unified framework for the analysis of high-order nonconforming virtual element methods applied to fourth-order nonlinear reaction–diffusion problems with clamped, Navier, and Cahn--Hilliard-type boundary conditions. The proposed approach encompasses $C^0$-nonconforming and Morley-type VEMs, and is applicable for arbitrary polynomial degrees $k \geq 2$.

A key contribution of our study is the introduction of a class of Companion operators, which enables the construction of new Ritz-type projections. This allows us to establish optimal error estimates under minimal spatial regularity assumptions on the weak solution, even in non-convex domains and for problems with different boundary conditions. Such low-regularity extend the applicability of NCVEM for fourth-order problems beyond what has been reported in the existing literature.

The theoretical findings are supported by numerical experiments on polygonal meshes, which confirm the optimal convergence rates and the robustness of the proposed schemes for the three types of boundary conditions. The versatility and accuracy demonstrated suggest that this framework can be successfully applied to a broad class of fourth order nonlinear problems.
Finally, we emphasize that the proposed analysis can be extended to more complex nonlinear time-dependent fourth-order problems. In particular, considering the Cahn--Hilliard equation or the Navier–Stokes equations in stream function formulation would constitute interesting and challenging directions for future research.


\small
\section*{Acknowledgments}
DA was partially supported by the National funding agency,  Anusandhan National Research Foundation for Research and Development in India, through project No : RJF/ 2025/000114. 

The research of DM was partially supported by the National Agency
for Research and Development, ANID-Chile through FONDECYT project 1220881,
and by project Centro de Modelamiento Matem\'atico (CMM), FB210005, BASAL funds for centers of excellence.
\setcounter{equation}{0}


\begin{thebibliography}{99}
		
	\bibitem{Adak2023_M3AS}
	\textsc{D. Adak, V. Anaya, M. Bendahmane, and D. Mora}, 
	\textit{Conforming and nonconforming virtual element methods for fourth-order nonlocal reaction diffusion equation}, 
	Math. Models Methods Appl. Sci., {\bf 33} (2023), no.~10, pp.~2035--2083.
		
	
	\bibitem{Adak2024_CMAME}
	\textsc{D. Adak, D. Mora, and A. Silgado},
	\textit{The Morley-type virtual element method for the Navier--Stokes equations in stream-function form}, 
	Comput. Methods Appl. Mech. Engrg., {\bf 419} (2024), p. 116573.
	
	\bibitem{Adak2019_NMPDE}
	\textsc{D. Adak, E. Natarajan, and S. Kumar},
	\textit{Convergence analysis of virtual element methods for semilinear parabolic problems on polygonal meshes}, 
	Numer. Methods Partial Differ. Equ., {\bf 35} (2019), pp. 222–245.
	
	\bibitem{Adams2003_Sobolev}
	\textsc{R. A. Adams and J. J. Fournier}, 
	\textit{Sobolev Spaces}, Elsevier, (2003).	
	
	\bibitem{Ahmad2013_CMA}
	\textsc{B. Ahmad, A. Alsaedi, F. Brezzi, L. D. Marini, and A. Russo}, 
	\textit{Equivalent projectors for virtual element methods}, 
	Comput. Math. Appl., {\bf 66} (2013), pp. 376–391.
	
		\bibitem{al2024finite}
	\textsc{Al-Musawi, A. Ghufran, and A J Harfash},
	\textit{Finite element analysis of extended {Fisher-Kolmogorov} equation with Neumann boundary conditions}, Appl. Numer. Math., \textbf{201}, (2024), pp. 41--71.
	
	
	\bibitem{Book_VEM2022}
	\textsc{P.F. Antonietti, L. Beir\~ao da Veiga and G. Manzini},
	The Virtual Element Method and	its Applications, SEMA SIMAI Springer Series,
	Springer, Cham, Vol. 31, 2022.  
	
	
	\bibitem{Antonietti2016_SINUM}
	\textsc{P. F. Antonietti, L. Beir\~ao da Veiga, S. Scacchi, and M. Verani}, 
	\textit{A $C^1$ virtual element method for the Cahn-Hilliard equation with polygonal meshes}, 
	SIAM J. Numer. Anal., {\bf 54} (2016), pp. 34–56.
		
	\bibitem{Antonietti2018_M3AS}
	\textsc{P. F. Antonietti, G. Manzini, and M. Verani}, 
	\textit{The fully nonconforming virtual element method for biharmonic problems}, 
	Math. Models Methods Appl. Sci., {\bf 28} (2018), pp. 387–407.
	
	
	\bibitem{ALKM2016}
	\textsc{B. Ayuso de Dios, K. Lipnikov and G. Manzini},
	\textit{The nonconforming virtual element method.}
	ESAIM Math. Model. Numer. Anal., {\bf 50}(3) (2016), pp. 879--904.
	
	
	\bibitem{Beirao2013_M3AS}
	\textsc{L. Beir\~ao da Veiga, F. Brezzi, A. Cangiani, G. Manzini, L. D. Marini, and A. Russo}, 
	\textit{Basic principles of virtual element methods}, 
	Math. Models Methods Appl. Sci., {\bf 23} (2013), pp. 199–214.
	
	
	\bibitem{Beirao2023_ActaNumer}
	\textsc{L. Beirão da Veiga, F. Brezzi, L. D. Marini, and A. Russo}, 
	\textit{The virtual element method}, 
	Acta Numer., {\bf 32} (2023), pp. 123--202.
	
	
		\bibitem{BR80}
	\textsc{H. Blum and R. Rannacher},
	\textit{On the boundary value problem of the
		biharmonic operator on domains with angular corners},
	Math. Methods Appl. Sci., \textbf{2}(4) (1980), pp. 556--581.
	
	
	\bibitem{Brenner2012_SINUM}
	\textsc{S. C. Brenner, S. Gu, T. Gudi, and L.-Y. Sung}, 
	\textit{A quadratic $C^0$ interior penalty method for linear fourth-order boundary value problems with boundary conditions of the Cahn--Hilliard type}, 
	SIAM J. Numer. Anal., {\bf 50} (2012), pp.~2088--2110.
	
	
	
	\bibitem{Brenner-Monk-Sun2014}
	\textsc{S. C. Brenner, P. Monk and J. Sun},
	\textit{$C^0$ interior penalty Galerkin method for biharmonic eigenvalue problems},
	Spectral and high order methods for partial differential equations, ICOSAHOM 2014, pp. 3--15.
	Lect. Notes Comput. Sci. Eng., 106, Springer, Cham, 2015
		
	\bibitem{BSZZ2013}
	\textsc{S.C. Brenner SC, L. Sung, H. Zhang and Y. Zhang},
	\textit{A Morley finite element method for the displacement
		obstacle problem of clamped Kirchhoff plates},
	J. Comput. Appl. Math., {\bf 254} (2013), pp. 31--42.
	
	
	
	\bibitem{Brezzi2013_CMAME}
	\textsc{F. Brezzi and L. D. Marini}, 
	\textit{Virtual element methods for plate bending problems}, 
	Comput. Methods Appl. Mech. Engrg., {\bf 253} (2013), pp. 455--462.
	
	
	
	\bibitem{CMS2016}
	\textsc{A. Cangiani, G. Manzini and O.J. Sutton},
	\textit{Conforming and nonconforming virtual element methods for elliptic problems},
	IMA J. Numer. Anal., {\bf 37} (2017), pp. 1317--1354.
	
	\bibitem{CKP-SINUM2023}
	\textsc{C. Carstensen, R. Khot and A.K. Pani},
	\textit{Nonconforming virtual elements for the biharmonic
		equation with Morley degrees of freedom on polygonal meshes},
	SIAM J. Numer. Anal., {\bf 61}(5) (2023), pp. 2460--2484.
	
	\bibitem{CH2020}
	\textsc{L. Chen and X. Huang},
	\textit{Nonconforming virtual element method for $2m$th order partial differential
		equations in $\R^n$}, Math. Comp., {\bf 89}(324) (2020), pp. 1711--1744.
	
	\bibitem{Chinosi2016_CMA}
	\textsc{C. Chinosi and L. D. Marini}, 
	\textit{Virtual element method for fourth order problems: $L^2$-estimates}, 
	Comput. Math. Appl., {\bf 72} (2016), pp. 1959–1967.
	
	\bibitem{Ciarlet2002_FEM}
	\textsc{P. G. Ciarlet}, 
	\textit{The Finite Element Method for Elliptic Problems}, 
	SIAM, (2002).
	
	\bibitem{Das2024_CMA}
	\textsc{A. Das, B. P. Lamichhane, and N. Nataraj}, 
	\textit{A unified mixed finite element method for fourth-order time-dependent problems using biorthogonal systems}, 
	Comput. Math. Appl., {\bf 165} (2024), pp. 52--69.
	
	\bibitem{DNR2025_JSC}
		\textsc{A. Das,  N. Nataraj and G.~C. Remesan}, 
\textit{Semi and fully discrete analysis of extended Fisher-Kolmogorov equation with nonstandard FEMs for space discretisation},
 J. Sci. Comput. {\bf 104}(1) (2025), Paper No. 14, 44 pp. 
	
	
	\bibitem{Danumjaya-Pani_2006}
	\textsc{P. Danumjaya and A. Pani}, 
	\textit{Numerical methods for the extended Fisher-Kolmogorov
		(EFK) equation}, Int. J. Numer. Anal. Model., {\bf 3} (2) (2006), pp. 186--210.
	
	\bibitem{Danumjaya2021_CMA}
	\textsc{P. Danumjaya, A. K. Pany, and A. K. Pani}, 
	\textit{Morley FEM for the fourth-order nonlinear reaction-diffusion problems}, 
	Comput. Math. Appl., {\bf 99} (2021), pp. 229--245.
	
	
	
	\bibitem{Dedner2022_IMA}
	\textsc{A. Dedner and A. Hodson}, 
	\textit{Robust nonconforming virtual element methods for general fourth-order problems with varying coefficients}, 
	IMA J. Numer. Anal., {\bf 42} (2022), pp. 1364–1399.
	
	\bibitem{Dedner2024_JSC}
	\textsc{A. Dedner and A. Hodson}, 
	\textit{A higher order nonconforming virtual element method for the Cahn–Hilliard equation}, 
	J. Sci. Comput., {\bf 101} (2024), p. 81.
	

	
	\bibitem{DE2022}
	\textsc{Z. Dong and A. Ern},
	\textit{Hybrid high-order and weak Galerkin methods for the biharmonic problem}, 
	SIAM J. Numer. Anal., {\bf 60}(5) (2022), pp. 2626--2656.


   \bibitem{DE_IMA2024}
   \textsc{Z. Dong and A. Ern},
    \textit{$C^0$-hybrid high-order methods for biharmonic problems}, 
    IMA J. Numer. Anal., {\bf 44}(1) (2024), pp. 24-52.
	
	\bibitem{Dong2023_JSC}
	\textsc{Z. Dong and L. Mascotto},
	\textit{$hp$-optimal interior penalty discontinuous Galerkin methods for the biharmonic problem},
	J. Sci. Comput., \textbf{96} (2023), no.~1, Paper No.~30, 32~pp.
	
	
	\bibitem{Elliott1989_SINUM}
	\textsc{C. M. Elliott and D. A. French}, 
	\textit{A nonconforming finite-element method for the two-dimensional Cahn--Hilliard equation}, 
	SIAM J. Numer. Anal., {\bf 26} (1989), pp. 884--903.
	
	
	\bibitem{Feng2007_MathComp}
	\textsc{X. Feng, Y. He, and C. Liu},
	\textit{Analysis of finite element approximations of a phase field model for two-phase fluids},
	Math. Comp., {\bf 76} (2007), pp.~539--571.
	
	
	\bibitem{GR}
	\textsc{V. Girault and P.A. Raviart},
	\textit{Finite Element Methods for Navier-Stokes Equations},
	Springer-Verlag, Berlin, 1986.
	
	
	\bibitem{Georgoulis2009_IMA}
	\textsc{E. H. Georgoulis and P. Houston}, 
	\textit{Discontinuous Galerkin methods for the biharmonic problem}, 
	IMA J. Numer. Anal., {\bf 29} (2009), no. 3, pp. 573--594.
	
	
	\bibitem{Gudi-Gupta_camwa2013}
	\textsc{T. Gudi and H. Gupta}, 
	\textit{A fully discrete $C^0$ interior penalty Galerkin approximation of the
		extended Fisher-Kolmogorov equation}, J. Comput. Appl. Math., {\bf 247} (2013), pp. 1--16.
	
			\bibitem{HR1990}
	\textsc{J. Heywood and R. Rannacher}, 
	\textit{Finite element approximation of the nonstationary Navier-
		Stokes problem, IV. Error analysis for second-order time discretization},
	SIAM J. Numer. Anal., {\bf 19}, (1990), pp. 275–311.
	
	\bibitem{Huang2021_JCAM}
	\textsc{J. Huang and Y. Yu}, 
	\textit{A medius error analysis for nonconforming virtual element methods for Poisson and biharmonic equations}, 
	J. Comput. Appl. Math., {\bf 386} (2021).
	
	
	
	\bibitem{Kay2009_SINUM}
	\textsc{D. Kay, V. Styles, and E. Süli}, 
	\textit{Discontinuous Galerkin finite element approximation of the Cahn--Hilliard equation with convection}, 
	SIAM J. Numer. Anal., {\bf 47} (2009), no. 4, pp. 2660--2685.
	
	
	
	\bibitem{Khot2025_MathComp}
	\textsc{R. Khot, D. Mora, and R. Ruiz-Baier}, 
	\textit{Virtual element methods for Biot-Kirchhoff poroelasticity}, 
	Math. Comp., {\bf 94} (2025), no. 353, pp. 1101--1146.
	
	\bibitem{Li2023_IMA}
	\textsc{H. Li, P. Yin, and Z. Zhang}, 
	\textit{A $C^0$ finite element method for the biharmonic problem with Navier boundary conditions in a polygonal domain}, 
	IMA J. Numer. Anal., {\bf 43} (2023), no.~3, pp.~1779--1801.
	
	\bibitem{Li2021_IMA}
	\textsc{M. Li, J. Zhao, C. Huang, and S. Chen}, 
	\textit{Conforming and nonconforming VEMs for the fourth-order reaction-subdiffusion equation: A unified framework}, 
	IMA J. Numer. Anal., {\bf 10} (2021), no.~3, pp.~2238--2300.

	
	\bibitem{Lovadina2021_M2AN}
	\textsc{C. Lovadina, D. Mora, and I. Vel\'asquez}, 
	\textit{A virtual element method for the von K\'arm\'an equations}, 
	ESAIM Math. Model. Numer. Anal., {\bf 55} (2021), no.~2, pp.~533--560.
	
	
	\bibitem{Mora2018_M2AN}
	\textsc{D. Mora, G. Rivera, and I. Vel\'asquez}, 
	\textit{A virtual element method for the vibration problem of Kirchhoff plates}, 
	ESAIM Math. Model. Numer. Anal., {\bf 52} (2018), no.~4, pp.~1437--1456.
	
	\bibitem{Mora2021_IMA}
	\textsc{D. Mora, C. Reales, and A. Silgado}, 
	\textit{A $C^1$-virtual element method of high order for the Brinkman equations in stream function formulation with pressure recovery}, 
	IMA J. Numer. Anal., {\bf 42} (2021), no~4,~pp.~3632–3674.
	
	
	\bibitem{Mora2025_SISC}
	\textsc{D. Mora and A. Silgado}, 
	\textit{Stream virtual elements for the Navier-Stokes system: nonstandard boundary conditions and variable recovery algorithms}, 
	SIAM J. Sci. Comput., {\bf 47} (2025), no.~1, pp.~A207--A237.
	
	\bibitem{Morley}
	\textsc{L.S.D. Morley},
	\textit{The triangular equilibrium element in the solution of plate bending problems}, 
	Aero. Quart., {\bf 19} (1968), pp. 149--169.
	
	
	\bibitem{Mozolevski2003_CMAM}
	\textsc{I. Mozolevski and E. S\"uli}, 
	\textit{A priori error analysis for the $hp$-version of the discontinuous Galerkin finite element method for the biharmonic equation}, 
	Comput. Methods Appl. Math., {\bf 3} (2003), pp.~596--607.
	
	\bibitem{Pei2023_CMA}
	\textsc{L. Pei, C. Zhang, and M. Li}, 
	\textit{Dissipative nonconforming virtual element method for the fourth-order nonlinear extended Fisher-Kolmogorov equation}, 
	Comput. Math. Appl., {\bf 152} (2023), pp. 28–45.
	
	
	\bibitem{Shylaja2024_ACM}
	\textsc{D. Shylaja and S. Kumar}, 
	\textit{Morley type virtual element method for von Kármán equations}, 
	Adv. Comput. Math., {\bf 50} (2024), no.~5, Paper No.~94, 31 pp.
	
	
	\bibitem{Vacca_parabolico}
	\textsc{G. Vacca and L. Beir\~ao da Veiga}, 
	\textit{Virtual element methods for parabolic problems on polygonal meshes},
	Numer. Methods Partial Differential Equation, {\bf 31} (2015), pp. 2110--2134.
	
	
	\bibitem{Zhao2016_M3AS}
	\textsc{J. Zhao, S. Chen, and B. Zhang}, 
	\textit{The nonconforming virtual element method for plate bending problems}, 
	Math. Models Methods Appl. Sci., {\bf 26} (2016), pp. 1671–1687.

	
	\bibitem{Zhao2018_JSC}
	\textsc{J. Zhao, B. Zhang, S. Chen, and S. Mao}, 
	\textit{The Morley-type virtual element for plate bending problems}, 
	J. Sci. Comput., {\bf 76} (2018), pp. 610–629.
	
	\bibitem{Zhao2023_MathComp}
	\textsc{J. Zhao, S. Mao, B. Zhang, and F. Wang},
	\textit{The interior penalty virtual element method for the biharmonic problem},
	Math. Comp., {\bf 92} (2023), no.~342, pp.~1543--1574.
	
	\bibitem{Zhao2019_parabolic}		
	\textsc{J. Zhao, B. Zhang, and X. Zhu}, 
	\textit{The nonconforming virtual element method for parabolic problems},
	Appl. Numer. Math. {\bf 143} (2019), pp. 97--113.	
	
	
	
\end{thebibliography}
\end{document}